\documentclass[11pt]{amsart}
\pdfoutput=1

\usepackage[dvipsnames, table, xcdraw]{xcolor}
\usepackage{upgreek}
\usepackage{amssymb,amsmath,mathtools,amsfonts,amsthm,thmtools}
\usepackage[mathscr]{eucal}
\usepackage{fullpage}
\usepackage{color}
\usepackage{caption}
\usepackage{subcaption}
\usepackage{listings}
\usepackage{enumitem}
\setlist[itemize]{leftmargin=30pt, itemsep=2pt}
\setlist[enumerate]{leftmargin=30pt, itemsep=2pt}
\usepackage{soul}
\usepackage{setspace} 
\usepackage[numbers]{natbib}
\usepackage{indentfirst} 
 
\usepackage[colorlinks,pagebackref]{hyperref}

\definecolor{mylinkcolor}{RGB}{0,0,255}
\definecolor{mycitecolor}{RGB}{169,169,169}
\definecolor{myurlcolor}{RGB}{255,20,147}
\definecolor{mybiburlcolor}{RGB}{80,80,80}
\hypersetup{
  colorlinks=true,
  urlcolor=myurlcolor,
  citecolor=mycitecolor,
  linkcolor=mylinkcolor,
  linktoc=page,
  breaklinks=true
}
\usepackage{tikz-cd,tikz} 
\usepackage{cleveref} 

\usepackage{algorithmic}
\usepackage[ruled]{algorithm}
\makeatletter 
 
\@addtoreset{algorithm}{section} 
\makeatother

\numberwithin{equation}{section}

\newtheorem{theorem}[algorithm]{Theorem}
\newtheorem{lemma}[algorithm]{Lemma}
\newtheorem{coro}[algorithm]{Corollary}
\newtheorem{conjecture}[algorithm]{Conjecture}

\newtheorem{proposition}[algorithm]{Proposition}

\theoremstyle{definition} 
\newtheorem{defn}[algorithm]{Definition}
\newtheorem{question}[algorithm]{Question}
\newtheorem{remark}[algorithm]{Remark}

\newtheorem{example}[algorithm]{Example}

\crefname{equation}{equation}{equations}
\Crefname{equation}{Equation}{Equations}
\crefname{conjecture}{conjecture}{conjectures}
\Crefname{conjecture}{Conjecture}{Conjectures}

\usepackage{colonequals}

\newcommand{\ZZ}{\mathbb{Z}}
\newcommand{\QQ}{\mathbb{Q}} 
\newcommand{\CC}{\mathbb{C}} 
\newcommand{\PP}{\mathbb{P}} 
\renewcommand{\AA}{\mathbb{A}} 
\newcommand{\cE}{\mathcal{E}}

\newcommand{\cO}{\mathcal{O}}

\DeclareMathOperator{\Aut}{Aut}
\DeclareMathOperator{\codim}{codim} 
\DeclareMathOperator{\rank}{rk} 
\DeclareMathOperator{\Spec}{Spec}
 
\DeclareMathOperator{\Ext}{Ext} 
\DeclareMathOperator{\Sym}{Sym} 
\DeclareMathOperator{\Tsch}{Tsch} 
 
\DeclareMathOperator{\Bun}{Bun}

\DeclareMathOperator{\End}{End}
\DeclareMathOperator{\Pic}{Pic}

\renewcommand{\subset}{\subseteq}

\newcommand{\relSpec}{\Spec}
\newcommand{\rest}{(-e_5,-e_4,-e_3)}
\newcommand{\fac}{\mathsf{fac}}
\newcommand{\bad}{\mathsf{bad}}
\DeclareFontShape{OT1}{cmr}{m}{scit}{<->ssub*cmr/m/sc}{}

\title[Tschirnhausen bundles of sextic covers of $\PP^1$]{Tschirnhausen bundles of sextic covers of $\PP^1$}
\date{06 April 2026}

\author{Sam Frengley} 
\address{Inria and École polytechnique, Institut Polytechnique de Paris, Palaiseau, France}
\email{samuel.frengley@inria.fr}
\urladdr{\url{https://samfrengley.github.io/}}

\author{Sameera Vemulapalli}
\address{Department of Mathematics,
  Harvard University}
\email{vemulapalli@math.harvard.edu}
\urladdr{\url{https://web.math.princeton.edu/~sameerav/}}

\allowdisplaybreaks

\begin{document}

\begin{abstract}
  A degree $d$ genus $g$ cover of the complex projective line by a smooth irreducible curve $C$ yields a vector bundle on the projective line by pushforward of the structure sheaf. We classify the bundles that arise this way when $d = 6$. Interestingly, our methods show that all constraints on the pushforward are ``explained'' by multiplication in an algebra. Finally, we show that all possible pushforwards are realized by covers with a nontrivial proper subcover.
\end{abstract}

\maketitle

\begin{figure}[ht]
  \centering
  \includegraphics[scale=0.25]{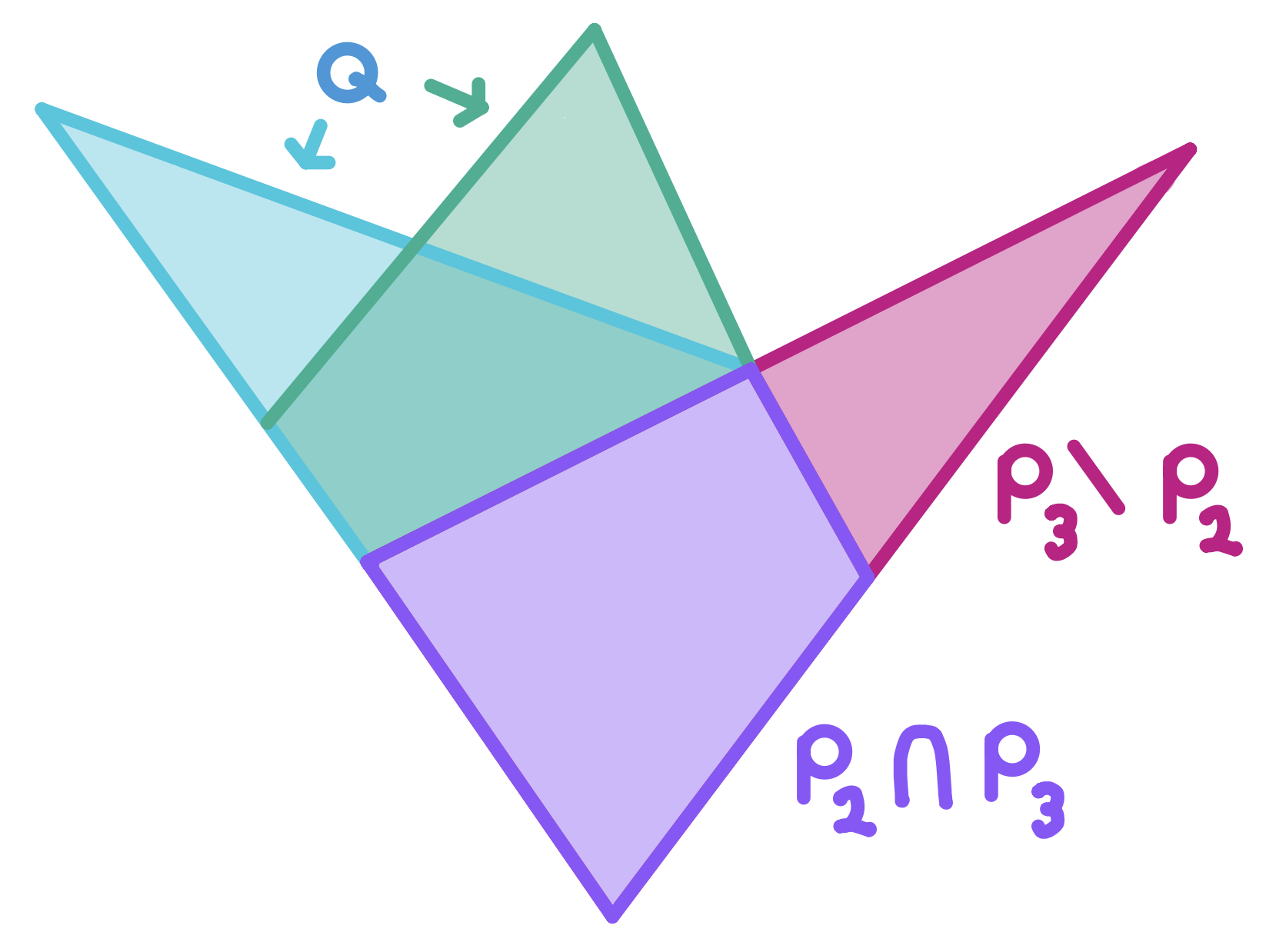}
  \caption{
    A cartoon drawing of the geography of degree $6$ covers of $\PP^1$. Given a sextic cover $\pi \colon C \rightarrow \PP^1$, its scrollar invariants $(e_1,\dots,e_5)$ yield an integer point of this region. We prove that every integer point of this region arises as the scrollar invariants of a sextic curve. The covers in the pink region $\mathcal{P}_3 \setminus \mathcal{P}_2$ necessarily factor through cubic subcovers. The covers in the blue and green regions $\mathcal{Q}$ necessarily factor through quadratic subcovers. The covers which do not factor through a nontrivial proper subcover are contained in the purple region $\mathcal{P}_2 \cap \mathcal{P}_3$.
  }
  \label{f:shapes}
\end{figure}

\setcounter{tocdepth}{1}
\tableofcontents

\section{Introduction}
Let $C$ be a smooth irreducible projective curve of genus $g$ over $\CC$ equipped with a finite morphism $\pi \colon C \to \PP^1$ of degree $d$. 
The restriction of functions gives an inclusion $\cO_{\PP^1} \rightarrow \pi_* \cO_C$.  
Define the \emph{Tschirnhausen bundle of $\pi$} to be $\mathcal{E}_{\pi} \coloneqq (\pi_* \cO_C /\cO_{\PP^1})^{\vee}$. By the Birkhoff-Grothendieck theorem, vector bundles on $\PP^1$ split completely, so we may write (uniquely up to non-unique isomorphism)
\[
\cE_\pi \cong \cO_{\PP^1}(e_1)\oplus \cdots \oplus \cO_{\PP^1}(e_{d-1})
\]
with $e_1 \leq e_2 \leq \dots \leq e_{d-1}$. Because $h^0(\PP^1, \pi_* \cO_C) = h^0(C,\cO_C) = 1$, we have $e_1 \geq 1$. The integers $\vec{e}_\pi = (e_1, \dots, e_{d-1})$ are called the \emph{scrollar invariants} of $\pi$, and encode a good deal of information about the curve $C$. For the rest of this article, assume $1 \leq e_1 \leq \dots \leq e_{d-1}$.

\begin{question}[Tschirnhausen realization problem]
  \label{q:main}
  Which $(d-1)$-tuples $\vec{e} \in \ZZ^{d-1}$ arise as the scrollar invariants of a smooth irreducible degree $d$ cover $\pi \colon C \rightarrow \PP^1$?
\end{question}

For $2 \leq d \leq 5$, the answer to the Tschirnhausen realization problem is known completely (combining \cite{maroni}, \cite{kato-ohbuchi}, \cite[Theorem~3.1]{VV}, and \cite[Theorem~1.4]{FV}) using the parametrizations of low degree covers \cite{CE,casnati}. Moreover, when $d$ is prime there is a conjectural answer to the Tschirnhausen realization problem \cite[Conjecture 1.3]{VV}; no such conjecture exists for composite $d > 5$. In \Cref{thm:main-thm} we give a complete answer to the Tschirnhausen realization problem in the case $d = 6$. 

Define $\mathcal{P}_2, \mathcal{P}_3 \subset \ZZ^5$ by the following conditions
\[
  \mathcal{P}_2 \colonequals \left\{
    ( e_1,\dots,e_5) \in \ZZ^5 \quad \Bigg| \quad  
    \begin{aligned} 
      & 1 \leq e_1 \leq \dots \leq e_5, \; e_5 \leq e_1 + e_4, \\ 
      & e_5 \leq e_2 + e_3, \; e_3 \leq e_1 + e_2, \; e_4 \leq 2e_2
    \end{aligned}
  \right\}
\]

\[
  \mathcal{P}_3 \colonequals \left\{
    (e_1,\dots,e_5) \in \ZZ^5 \quad \Bigg| \quad 
    \begin{aligned}
      & 1 \leq e_1 \leq \dots \leq e_5, \; e_5 \leq e_1 + e_4, \; \\
      & e_5 \leq e_2 + e_3,\; e_2 \leq 2e_1, \; e_4 \leq e_1 + e_3 
    \end{aligned}
  \right\}.
\]

\begin{defn}
  \label{def:admissible-partition}
  Let $\vec{e} = (e_1,\dots,e_5) \in \mathcal{P}_2$. We say a partition $\{i,j\} \sqcup \{k,\ell\} = \{2,3,4,5\}$ is \emph{admissible for $\vec{e}$} if the following conditions hold: 
  \begin{enumerate}[label=(\arabic*)]
  \item \label{cond:1} $e_i + e_j \leq e_k + e_\ell$,
  \item \label{cond:2} $e_i \leq e_j \leq e_i + e_1$,
  \item \label{cond:3} $e_k \leq e_{\ell} \leq e_k + e_1$, and
  \item \label{cond:4} $e_k + e_\ell + e_1 \leq 2e_i + 2e_j$.
  \end{enumerate}
  Let $\mathcal{Q}$ be the set of points $(e_1,\dots,e_5) \in \mathcal{P}_2 \setminus \mathcal{P}_3$ admitting an admissible partition.
\end{defn}

\begin{theorem}
  \label{thm:main-thm}
  A quintuple $\vec{e} \in \ZZ^5$ arises as the scrollar invariants of a smooth irreducible sextic cover if and only if $\vec{e} \in \mathcal{Q} \cup \mathcal{P}_3$.
\end{theorem}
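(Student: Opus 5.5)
The proof has two directions: necessity (every realized $\vec e$ lies in $\mathcal{Q}\cup\mathcal{P}_3$) and sufficiency (every such $\vec e$ is realized). For both I will use the algebra structure on $\pi_*\cO_C = \cO_{\PP^1}\oplus\cE_\pi^\vee$. Multiplication gives maps $\Sym^2\cE^\vee_\pi\to\pi_*\cO_C$, and irreducibility of $C$ forces these to be nondegenerate in a precise sense. For example, for each $i$ the subsheaf $\cO(-e_1)\oplus\dots\oplus\cO(-e_i)$ together with $\cO$ cannot be closed under multiplication, unless it cuts out a subcover. This is the standard argument behind bounds like $e_{i+j}\le e_i+e_j$ in \cite{VV}.

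\textbf{Necessity.}
\begin{enumerate}
\item Derive the inequalities shared by $\mathcal{P}_2$ and $\mathcal{P}_3$, namely $e_5\le e_1+e_4$ and $e_5\le e_2+e_3$. These should come from the fact that products of the low-degree summands must span enough of $\pi_*\cO_C$ generically. Concretely, if $e_5>e_2+e_3$, then the product of the first three summands lands in the span of $\cO\oplus\cO(-e_1)\oplus\dots\oplus\cO(-e_4)$, so this rank-$5$ subsheaf generates a proper subalgebra. That subalgebra has rank $5$, which does not divide $6$, so it contradicts integrality of $C$.
\item Split according to whether the cover is primitive. If $\pi$ has no proper subcover, show $\vec e\in\mathcal{P}_2\cap\mathcal{P}_3$. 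The extra inequalities $e_2\le 2e_1$, $e_3\le e_1+e_2$, $e_4\le 2e_2$ and $e_4\le e_1+e_3$ all have the form ``a rank-$k$ subsheaf containing $\cO$ is closed under multiplication.'' For a primitive cover such a subsheaf can only be closed if $k\in\{1,6\}$.
\item When $k$ is $2$ or $3$, closure yields a quadratic or cubic subcover. If $C\to D\to\PP^1$ with $D$ trigonal, use $\pi_*\cO_C=\phi_*(\cO_D\oplus\mathcal{L}^{-1})$. Combined with the known cubic Maroni constraints for $D$ and the degree-$2$ structure of $C\to D$, this should force $\vec e\in\mathcal{P}_3$.
\item If $C\to D\to\PP^1$ with $D$ hyperelliptic, write $\pi_*\cO_C=\psi_*(\cO_D\oplus\mathcal{F}^\vee)$ with $\mathcal{F}$ a rank-$2$ bundle on $D$. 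Then $\psi_*\mathcal{F}^\vee$ splits on $\PP^1$ as two pairs $\{i,j\},\{k,\ell\}$ coming from the two degree-$2$ pushforwards of the summands of $\mathcal F$. I expect the conditions of an admissible partition to come from the Maroni-type constraint $e_j-e_i\le e_1$ for line bundles pushed forward along a double cover with $e_1=g_D+1$, from ordering, and from the trigonal-type condition $\ref{cond:4}$ for the triple cover $C\to D$. This gives $\vec e\in\mathcal{Q}\cup\mathcal{P}_3$.
\end{enumerate}

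\textbf{Sufficiency.} I construct covers with the required subcovers, as the abstract suggests.
\begin{itemize}
\item For $\vec e\in\mathcal{P}_3$, take a trigonal $D$ with Tschirnhausen invariants $(e_1,e_2)$, which is possible since $e_2\le 2e_1$. Then take a double cover of $D$ branched along a divisor in $|2\mathcal{L}|$ for a suitable line bundle $\mathcal{L}$ on $D$ with $\phi_*\mathcal{L}^{-1}\cong\cO(-e_3)\oplus\cO(-e_4)\oplus\cO(-e_5)$. Realizability here reduces to the question of which rank-$3$ bundles arise as pushforwards of line bundles from trigonal curves. That question is handled by the cubic algebra and Casnati--Ekedahl type parametrizations, and I expect its answer to be exactly $e_4\le e_1+e_3$, $e_5\le e_1+e_4$, $e_5\le e_2+e_3$.
\item For $\vec e\in\mathcal{Q}$, take a hyperelliptic $D$ of genus $e_1-1$ and a triple cover $C\to D$ whose Tschirnhausen bundle on $D$ is $\mathcal{F}=M_1\oplus M_2$ with $\psi_*M_1$, $\psi_*M_2$ of splitting types $(e_i,e_j)$ and $(e_k,e_\ell)$. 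Existence of a smooth irreducible triple cover requires, by Miranda's description, a section of $\Sym^3\mathcal{F}\otimes\det\mathcal{F}^\vee$ with good behavior. Its global generation should follow from conditions $\ref{cond:1}$--$\ref{cond:4}$.
\end{itemize}

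The main obstacle is twofold: the necessity argument in the imprimitive quadratic case, and, correspondingly, proving that a general section gives a smooth connected triple cover of a hyperelliptic curve. For smoothness I would first try a Bertini argument on the relevant linear system. If that system is not base-point free, I would build an explicit reducible or nodal cover and smooth it by deformation, checking that the splitting type of $\pi_*\cO_C$ is preserved by semicontinuity within a flat family.
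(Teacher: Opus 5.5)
Your overall plan matches the paper's: multiplicative constraints plus a reduction to covers through a quadratic subcover, and sufficiency via double covers of trigonal curves for $\mathcal{P}_3$ and triple covers of hyperelliptic curves for $\mathcal{Q}$. But the steps that carry the content are either missing or rest on a false claim.

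\textbf{Necessity.} The most serious gap is step 4. You assume the rank-$2$ bundle $\mathcal{F}$ on the hyperelliptic curve $D$ (of genus $e_1-1$) is a sum of two line bundles, so that $(e_2,\dots,e_5)$ is the union of two pushed-forward pairs. In general $\mathcal{F}$ can be stable or a nonsplit extension. Even when $V=\mathcal{F}^\vee$ has a destabilizing line subbundle $L$, the sequence $0\to\psi_*L\to\psi_*V\to\psi_*(V/L)\to0$ need not split on $\PP^1$, so the $e_i$ are not the splitting types of $\psi_*L$ and $\psi_*(V/L)$.

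Your instinct that condition (4) comes from cubic-algebra multiplication is right. In the paper it is the nonvanishing of $L\otimes L\to V/L$, since otherwise $\cO_D\oplus L$ would be a subalgebra. But the paper then needs two ingredients you do not have. When $V$ is semistable, \Cref{lem:semistable-splitting} (pull back the top summand of $\psi_*V$, compare slopes, dualize via relative duality) gives $e_5-e_2\le e_1$, so $\{2,3\}\sqcup\{4,5\}$ is admissible. When $V$ is unstable, \Cref{lem:extensions} transfers $f_1+f_2+e_1\le 2g_1+2g_2$ to $\vec e$: it degenerates the extension class to $0$, uses semicontinuity of $h^0$ to get partial-sum inequalities, and checks the possible interleavings.

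There are two smaller problems. Step 1 does not show the rank-$5$ subsheaf is closed under multiplication; for instance $\cO(-e_3)^{\otimes 2}\to\cO(-e_5)$ or $\cO(-e_1)\otimes\cO(-e_4)\to\cO(-e_5)$ can be nonzero. Step 3 is false when $\pi$ also factors through a quadratic. Take the fibre product, with disjoint branch loci, of a genus-$1$ double cover of $\PP^1$ and a trigonal curve with invariants $(10,10)$; it has $\vec e=(2,10,10,12,12)\notin\mathcal{P}_3$. The paper avoids both issues by citing \cite[Theorem~1.26]{Vemulapalli:bounds} for $\vec e\in\mathcal{P}_2\cup\mathcal{P}_3$, and observing, as in \Cref{lem:factoring}, that $\vec e\notin\mathcal{P}_3$ already makes $\cO\oplus\cO(-e_1)$ a quadratic subalgebra.

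\textbf{Sufficiency.} The $\mathcal{P}_3$ case does not reduce to the existence of $\mathcal{L}$ with prescribed $\phi_*\mathcal{L}^{-1}$; your guessed inequalities are exactly Larson's nonemptiness criterion, which the paper uses. You also need some such $\mathcal{L}$ for which $|2\mathcal{L}|$ has a reduced member. The prescribed splitting locus can be a small subvariety of $\Pic(D)$, and $\deg 2\mathcal{L}$ can lie well below $2g(D)$, so nothing guarantees this. The paper proves it by a dimension count. Squaring has finite fibres, and a line bundle whose pushforward has no negative summand is basepoint-free. Larson's dimension formulas plus a linear-programming verification then show the prescribed locus is larger than every splitting locus with a negative summand in degree $\deg 2\mathcal{L}$.

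In the $\mathcal{Q}$ case, conditions (1)--(4) do not give sections of the extreme coefficient $M_1^{\otimes 2}\otimes M_2^\vee$ of the cubic form. Condition (4) only makes its degree nonnegative, and a general line bundle of small degree on a curve of genus $e_1-1$ has no sections. If that coefficient vanishes identically, the zero locus of the cubic form contains a section of the $\PP^1$-bundle over $D$, so $C$ is reducible. The paper must choose $M_1,M_2$ carefully within their splitting loci. It uses semireduced divisors $D_1=mQ+D_1'$ and $D_2=2mQ+D_2'$, with $Q$ a non-Weierstrass point and $D_1'$ a sum of Weierstrass points, so that this bundle is linearly equivalent to an explicit effective divisor. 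The other extreme coefficient has degree at least $e_1=g(D)+1$ and is handled by Riemann--Roch.
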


\subsection{Constraints on scrollar invariants}
The scrollar invariants of a degree $d$ cover $\pi \colon C \to \PP^1$ must satisfy constraints arising from multiplication. More precisely, $\pi_* \cO_C$ is an $\cO_{\PP^1}$-algebra and this forces $\vec{e}_{\pi}$ to satisfy some linear inequalities \cite[Section~1.3]{Vemulapalli:bounds} (e.g., see \Cref{ex:multiplication-constraints} for a proof that $e_2 \leq 2e_1$ when $d = 3$). One significant feature is that the linear inequalities imposed on $\vec{e}_\pi$ depend on the degrees of the subcovers of $\pi$. In the case $d \leq 5$, the necessary constraints arising from multiplication in \cite[Section~1.3]{Vemulapalli:bounds} are known to be sufficient~\cite{maroni, kato-ohbuchi, VV, FV}.

Moreover, when $\pi$ is \emph{primitive} (that is, if $\pi$ does not factor through a nontrivial proper subcover) it is known from \cite[Section~1.3]{Vemulapalli:bounds} that $e_{i + j} \leq e_i + e_j$ for all $i$ and $j$. A conjecture of the second author~\cite[Conjecture 1.3]{VV} asserts that this is a \emph{sufficient} condition, i.e., every $\vec{e}$ satisfying $e_{i + j} \leq e_i + e_j$ arises as the scrollar invariants of a smooth primitive cover. This conjecture is known for each $d \leq 5$ (combining \cite{maroni, VV, FV}).

In the imprimitive case, the constraints in \cite[Section~1.3]{Vemulapalli:bounds} are significantly more complicated. Regardless, they are worked out explicitly for all $d < 18$ \cite[Theorem~1.26]{Vemulapalli:bounds}. One might wonder if these are \emph{sufficient} conditions, but this turns out to be false (as we show in the case $d = 6$). There is a good reason for this insufficiency: if $\pi \colon C \rightarrow \PP^1$ factors through a subcover as $\tau \circ \gamma$, then $\gamma_* \cO_C$ is an algebra over the intermediate cover and this forces the slopes of $\gamma_* \cO_C$ to satisfy some linear inequalities. This in turn imposes constraints on $\vec{e}_{\pi}$. 

Indeed, in the case $d = 6$, if $\pi$ is a sextic cover factoring as $\tau \circ \gamma$ through a degree $2$ cover $\tau \colon X \to \PP^1$, then $\vec{e}_\pi$ satisfies additional linear inequalities coming from the multiplicative structure of $\gamma_* \cO_C$ as a cubic $\cO_X$-algebra. This constraint is truly new in the sense that it is \emph{not} implied by those in \cite[Section~1.3]{Vemulapalli:bounds}. We prove via explicit construction that this new constraint, combined with the constraints of \cite[Section~1.3]{Vemulapalli:bounds}, are necessary and sufficient conditions for a quintuple $\vec{e}$ to arise as the scrollar invariants of a smooth sextic cover.

\subsection{Main ideas of the proof of \Cref{thm:main-thm}}
If $\pi \colon C \rightarrow \PP^1$ is a degree $6$ cover \cite[Section~1.3]{Vemulapalli:bounds} gives an explicit set of linear inequalities on the scrollar invariants $\vec{e}_\pi$, depending on how $\pi$ factors. It turns out that in this case (when $d = 6$) there are quintuples $\vec{e}$ which satisfy the constraints of \cite[Section~1.3]{Vemulapalli:bounds} but which \emph{do not} arise as the scrollar invariants of a sextic curve. For example if $e_2 > 2e_1$, then one can show (\Cref{lem:factoring}) that $C$ is a triple cover of a hyperelliptic curve $\tau \colon X \rightarrow \PP^1$, e.g., there is a factorization
\[
    \begin{tikzcd}
C \arrow[swap, rd, "\pi"] \arrow[r, "\gamma"] & X \arrow[d, "\tau"]\\
& \PP^1
\end{tikzcd}
\]
and thus $\gamma_* \cO_C$ is a cubic $\cO_X$-algebra. The multiplication structure of $\gamma_* \cO_C$ places nontrivial constraints on the slopes of the vector bundle $\gamma_* \cO_C$, which in turn places constraints on $\vec{e}_{\pi}$. These constraints, which do not follow from \cite[Section~1.3]{Vemulapalli:bounds}, play a central role in this article. 

After proving these constraints (which we do in \Cref{thm:containment}), it remains to show that all $\vec{e}$ satisfying these constraints actually arise as the scrollar invariants of a sextic cover. Remarkably it turns out that such a quintuple $\vec{e}$ can always be realized as the scrollar invariants of a cover which factors (that is, a double cover of a triple cover or a triple cover of a double cover).

\subsection{Future directions}
We now pose several open questions for interested reader which may be tractable via the methods of this paper.

\subsubsection{Tschirnhausen bundles of primitive degree $6$ covers}
A conjecture of the second author~\cite[Conjecture 1.3]{VV} asserts that every $\vec{e}$ satisfying $e_{i + j} \leq e_i + e_j$ arises as the scrollar invariants of a smooth primitive cover. The case $d = 6$ is the smallest degree in which this conjecture is not known. In \cite[Theorem~1.4]{VV} it is shown that if $\vec{e}_\pi$ is the scrollar invariants of a primitive cover, then $e_{i + j} \leq e_i + e_j$ for all $i$ and $j$. Thus the difficulty of proving \cite[Conjecture~1.3]{VV} lies in showing the existence of smooth \emph{primitive} covers with prescribed scrollar invariants. In \Cref{sec:primitive} we sketch how \Cref{thm:main-thm} can be to prove the existence of primitive curves with some specified scrollar invariants in the case $d = 6$.

\subsubsection{Maroni loci of larger than expected dimension}
Let $\mathcal{H}_{d,g}$ be the moduli stack of degree $d$ covers of $\PP^1$ by genus $g$ smooth irreducible curves. Define the \emph{Maroni locus} 
\[
\mathcal{M}(\vec{e}) = \{\pi : C \rightarrow \PP^1 : \vec{e}_{\pi} = \vec{e} \} 
\subseteq \mathcal{H}_{d,g}.
\]
An application of Serre duality shows that $\sum e_i = d + g - 1$, so henceforth assume this. If the Tschirnhausen bundle of a cover behaves like a ``random'' vector bundle on $\PP^1$ of rank $d-1$ and degree $d + g - 1$, one might expect that:
\begin{enumerate}[label=(\arabic*)]
    \item \label{iii:expect1} 
    $\mathcal{M}(\vec{e})$ is nonempty precisely when $h^1(\PP^1, \End(\cO(\vec{e}))) \leq \dim(\mathcal{H}_{d,g})$; and
    \item \label{iii:expect2}
    when $\mathcal{M}(\vec{e})$ is nonempty, we have $\codim \mathcal{M}(\vec{e}) = h^1(\PP^1, \End(\cO(\vec{e})))$.
\end{enumerate}
Unfortunately, both of these guesses are false in general. Regarding \ref{iii:expect1}, the algebra structure on $\pi_* \cO_C$ sometimes forces $\mathcal{M}(\vec{e})$ to be empty when $h^1(\PP^1, \End(\cO(\vec{e}))) \leq \dim(\mathcal{H}_{d,g})$ (for example take $d = 3$ and $\vec{e} = (1,3)$). Regarding \ref{iii:expect2}, in the cases $d = 4,5$ it is known that the dimension of $\mathcal{M}(\vec{e})$ can be \emph{larger} than the expected dimension when the elements of $\vec{e}$ are spread out; see \cite{VV, FV}. 

Let $\mathcal{M}^{\fac}(\vec{e}) \subseteq \mathcal{M}(\vec{e})$ be the locus of covers factoring through a nontrivial proper subcover. In \Cref{sec:triple-double} and \Cref{sec:double-triple}, we explicitly construct elements of $\mathcal{M}^{\fac}(\vec{e})$ when possible. We do not compute $\dim \mathcal{M}^{\fac}(\vec{e})$, but in many cases such a computation should be possible. Now, the inequality
\[
    \dim \mathcal{M}(\vec{e}) \geq \dim \mathcal{M}^{\fac}(\vec{e})
\]
yields a \emph{lower bound} on the dimension of $\mathcal{M}(\vec{e})$. In cases where the elements of $\vec{e}$ are spread out, we expect that
\[
    \dim \mathcal{M}^{\fac}(\vec{e}) \geq \dim(\mathcal{H}_{6,g}) - h^1(\PP^1, \End(\cO(\vec{e}))).
\]
Combining these two inequalities, one may be able to show that when $d = 6$ many Maroni loci have dimension larger than the expected dimension.

\subsubsection{Making a conjecture for the Tschirnhausen realization problem for all $d$}
It is natural to wonder if, for general degree $d$, all constraints on the scrollar invariants $\vec{e}_\pi$ ``come from'' multiplication over some (possibly trivial) subcover of $\pi$. If this is true then it may be possible to combine the ideas of \Cref{sec:necessary} with \cite[Section~1.3]{Vemulapalli:bounds} to generalize \cite[Conjecture~1.3]{VV} to give a conjectural answer to the Tschirnhausen realization problem for composite $d > 6$. This would be a substantial step forward towards the Tschirnhausen realization problem.

\subsubsection{Solving the Tschirnhausen realization problem in degrees $8$ and $9$}
Degrees $8$ and $9$ provide excellent test cases for examining when all constraints on the Tschirnhausen bundle ``come from'' multiplication. Using the ideas of \Cref{sec:necessary} and \cite{Vemulapalli:bounds}, one can extract a number of constraints on the Tschirnhausen bundles of degree $8$ and $9$ covers explicitly. \emph{If} these conditions are sufficient for the existence of a cover, the natural question is now to construct all possibilities. 

We now present two mechanisms by which one might construct degree $8$ covers with specified scrollar invariants. Let $\tau \colon X \rightarrow \PP^1$ be a hyperelliptic curve. First, one may specify a quartic cover of $X$ by specifying a double cover $\gamma \colon C \rightarrow X$ along with a line bundle $L$ on $C$ and a squarefree section $s \in L^{\otimes -2}$. A theorem of Wood \cite[Theorem~1.4]{ideal-classes} describes\footnote{Taking $U = V$, $S = X$ and $n = 2$ in \cite[Theorem~1.4]{ideal-classes} and appropriately modifying to the case $\Sym^2 V \otimes W$.} the moduli space of pairs $(\gamma \colon C \rightarrow X, L)$ where $\gamma$ is a double cover, $L$ is a line bundle on $C$, and the pushforwards $\gamma_* \cO_C$, $\gamma_*L$ and $\gamma_* (L^{\otimes -2})$ are specified.  The moduli space is described as a quotient of an open locus of sections of a vector bundle on $X$ by a group. Upon fixing the pushforwards, one needs to check the nonemptiness of this open locus. Alternatively, one could try to use the parametrization of quartic algebras \cite{CE} over a hyperelliptic curve.

\begin{remark}
    A related application of Wood's parametrization to covers of $\PP^1$ is given in \cite{LV}, which uses Wood's parametrization to study the Brill--Noether loci of covers in Hirzebruch surfaces.   
\end{remark}

In degree $9$, one may construct examples as triple covers of triple covers. Let $\tau \colon X \rightarrow \PP^1$ be a triple cover of $\PP^1$ and let $V$ be a rank $2$ vector bundle on $X$. It is well-known that the moduli stack of triple covers $\gamma \colon C \rightarrow X$ with $(\gamma_* \cO_C/\cO_X)^{\vee} \simeq V$ is given by the quotient of an open locus of $\Sym^3 V \otimes \det(V)^{\vee}$ by $\Aut(V)$. One may construct examples where $V$ is a direct sum of line bundles; see \cite{Larson:BNT} for a description of line bundles on trigonal curves and their pushforwards to $\PP^1$.

\subsection{History}
There has been a great deal of work on the Tschirnhausen realization problem. We highlight here two ways (out of many) of constructing curves with prescribed scrollar invariants. In \cite[Theorem~1.5]{VV} a construction is given for a cover with specified scrollar invariants whenever $\vec{e}$ is \emph{concave}; the proof involves desingularizing curves in Hirzebruch surfaces and keeping track of how the scrollar invariants change. In \cite[Theorem~1.1]{dp} it is shown that for any $\vec{e} = (e_1,\dots,e_{d-1})$, there exists an integer $k \in \ZZ_{\geq 0}$ such that $(e_1 +k,\dots,e_{d-1}+k)$ is realized as the scrollar invariants of some cover; the proof involves deforming curves in projective bundles. For further constructions of curves with specified scrollar invariants, see \cite{kato-ohbuchi,coppens,ballico,coppens2,cfz, redigolo}. For constraints on scrollar invariants of curves in general degree, see \cite{oh-bounds,Vemulapalli:bounds,VV}.

\subsection{Conventions}
We work over the complex numbers, but all results in this paper hold over an algebraically closed field of characteristic zero. All curves are assumed to be smooth, projective, and irreducible unless otherwise specified. Throughout we drop the subscript and write $\cO = \cO_{\PP^1}$ for the structure sheaf of $\PP^1$. If $X$ is a curve, then a degree $d$ cover of $X$ means a finite morphism $C \rightarrow X$ of degree $d$, where $C$ is smooth and irreducible.

\subsection{Acknowledgements}
The authors would like to thank Ravi Vakil and Melanie Matchett Wood for helpful discussions. SF was supported by the HYPERFORM consortium, funded by France through Bpifrance. SV was supported by the National Science Foundation under grant number DMS2303211.

\section{Constraints on the Tschirnhausen bundle}
\label{sec:necessary}

We begin with a warm-up showing how multiplication imposes constraints on the Tschirnhausen bundle of a trigonal cover.

\begin{example}
\label{ex:multiplication-constraints}
Suppose that $\pi \colon C \to \PP^1$ has degree $d=3$. We claim that $e_2 \leq 2e_1$. 
Let 
\[m \colon \pi_* \cO_C \otimes \pi_* \cO_C \rightarrow \pi_* \cO_C
\]
denote the multiplication map. Fix a splitting $\pi_* \cO_C = \cO \oplus \cO(-e_1) \oplus \cO(-e_2)$ and write $\mathcal{A} = \cO \oplus \cO(-e_1)$. If $m( \mathcal{A} \otimes \mathcal{A} ) \subseteq \mathcal{A}$ then $\mathcal{A}$ is a quadratic subalgebra of $\pi_* \cO_C$, and therefore the trigonal cover $\pi \colon C \rightarrow \PP^1$ factors through the degree $2$ morphism $\relSpec_{\PP^1}(\mathcal{A}) \rightarrow \PP^1$, which is a contradiction. Therefore, the multiplication map $m$ induces some nonzero morphism $\cO(-e_1) \otimes \cO(-e_1) \rightarrow \cO(-e_2)$ (composing $m$ with the projection onto $\cO(-e_2)$) and thus $e_2 \leq 2e_1$. 
\end{example}

In the remainder of this section, we will prove the following theorem (which is the ``only if'' direction of \Cref{thm:main-thm}).

\begin{theorem}
\label{thm:containment}
Let $\pi : C \rightarrow \PP^1$ be a sextic cover with scrollar invariants $\vec{e}_\pi = (e_1,\dots,e_5)$. Then $\vec{e}_\pi \in \mathcal{Q} \cup \mathcal{P}_3$.
\end{theorem}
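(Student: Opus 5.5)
The plan is to split according to how $\pi$ factors, and to use the multiplication constraints of \cite[Section~1.3]{Vemulapalli:bounds}. Those constraints give every sextic cover the inequalities $1\le e_1\le\dots\le e_5$, $e_5\le e_1+e_4$ and $e_5\le e_2+e_3$. Beyond these, the remaining inequality defining $\mathcal{P}_2$ or $\mathcal{P}_3$ will be extracted by the same mechanism as \Cref{ex:multiplication-constraints}. Fix a splitting $\pi_*\cO_C=\cO\oplus\cO(-e_1)\oplus\dots\oplus\cO(-e_5)$. If $\mathcal{A}_k=\cO\oplus\cO(-e_1)\oplus\dots\oplus\cO(-e_k)$ is not closed under multiplication, then some product $\cO(-e_a)\otimes\cO(-e_b)\to\cO(-e_c)$ with $a,b\le k<c$ is nonzero, which gives $e_c\le e_a+e_b$. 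If instead $\mathcal{A}_k$ is closed under multiplication, it is a subalgebra of rank $k+1$, so $\pi$ factors through a cover of degree $k+1$. This forces $k+1\in\{2,3\}$, since a subalgebra of a degree-$6$ field extension must have rank dividing $6$.

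The cases then go as follows.

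\begin{itemize}
\item Apply the argument with $k=1$. Either $e_2\le 2e_1$, or $\pi$ factors through a double cover $\tau\colon X\to\PP^1$ with $\tau_*\cO_X=\cO\oplus\cO(-e_1)$.
\item Apply it with $k=2$. Either $e_3\le e_1+e_2$ (using $e_2\le e_1+e_1$ trivially), or $\pi$ factors through a triple cover with Tschirnhausen invariants $(e_1,e_2)$.
\item \emph{Primitive covers.} Here we have all $e_{i+j}\le e_i+e_j$ by \cite{VV}, so $\vec e\in\mathcal{P}_2\cap\mathcal{P}_3$.
\item \emph{Covers through a cubic subcover but no quadratic one.} The $k=1$ argument gives $e_2\le 2e_1$. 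The constraint $e_4\le e_1+e_3$ should come from the cubic-subcover inequalities in \cite[Theorem~1.26]{Vemulapalli:bounds}, which yield $\vec e\in\mathcal{P}_3$.
\end{itemize}

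This leaves the main case: $\pi=\tau\circ\gamma$ with $\tau$ of degree $2$ and $e_2>2e_1$, so that $\vec e\notin\mathcal{P}_3$ and we must prove $\vec e\in\mathcal{Q}$. First, $\vec e\in\mathcal{P}_2$ should follow from the quadratic-subcover constraints of \cite{Vemulapalli:bounds}. Next, write $\gamma_*\cO_C=\cO_X\oplus W^\vee$ with $W$ of rank $2$ on $X$. Then $\pi_*\cO_C=\tau_*\cO_X\oplus\tau_*W^\vee$, so $\{e_2,\dots,e_5\}$ are the splitting degrees of $(\tau_*W^\vee)^\vee$.

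The key new input is the triple-cover structure over $X$. By the Miranda/Casnati–Ekedahl description, the cover $\gamma$ corresponds to a section of $\Sym^3W\otimes\det W^\vee$ on $X$. The analogue of \Cref{ex:multiplication-constraints} over $X$ says the following: if $L\subset W^\vee$ is a maximal-degree line subbundle, then $L^2\to W^\vee/L$ is nonzero, since otherwise $\cO_X\oplus L$ would be a quadratic subalgebra and $C$ would have degree $2$ over $X$. Then $\tau_*L$ and $\tau_*(W^\vee/L)$ are rank-$2$ bundles on $\PP^1$. The claim to prove is that their degrees $\{-e_i,-e_j\}$ and $\{-e_k,-e_\ell\}$ give the admissible partition:
\begin{itemize}
\item condition \ref{cond:1} comes from the maximality of $L$ (destabilizing order);
\item conditions \ref{cond:2} and \ref{cond:3} hold because the pushforward of a line bundle $M$ on a hyperelliptic curve, whose $\tau_*\cO_X$-module structure involves multiplication by $\cO(-e_1)$, has splitting degrees differing by at most $e_1$;
\item condition \ref{cond:4} comes from $\deg(L^2)\le\deg(W^\vee/L)$, translated through $\deg\tau_*M=\deg M-e_1$, i.e. $2(-e_i-e_j+e_1)\le -e_k-e_\ell+e_1$.
\end{itemize}

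The main obstacle is compatibility of splittings: the extension $0\to\tau_*L\to\tau_*W^\vee\to\tau_*(W^\vee/L)\to0$ need not split. I would handle this by showing that it splits when $e_2>2e_1$ for degree reasons, via a vanishing $H^1$ computed from \ref{cond:1}. Failing that, I would choose $L$ so the sub has the largest slopes, and match the $e$'s by a semicontinuity argument.
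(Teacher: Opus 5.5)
Your key mechanism matches the paper's: the nonvanishing of $L^{\otimes 2}\to W^\vee/L$ for the destabilizing $L$ yields condition~(4). However, the proposal leaves three real gaps.

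\textbf{First gap: a missing case.} You never treat covers that factor through a double cover while $e_2\le 2e_1$. You implicitly equate $\vec e\notin\mathcal{P}_3$ with $e_2>2e_1$, but $\vec e\notin\mathcal{P}_3$ can also come from $e_4>e_1+e_3$. For example, $(2,4,4,7,7)\in\mathcal{P}_2\setminus\mathcal{P}_3$ has $e_2=2e_1$, lies in $\mathcal{Q}$ via $\{2,3\}\sqcup\{4,5\}$, and is realized by \Cref{thm:double-triple}, so this case really occurs. You cannot simply reuse your main-case argument there. You use $e_2>2e_1$ to know that the quadratic subalgebra is $\cO\oplus\cO(-e_1)$, i.e.\ $g_X=e_1-1$, and this feeds into $\deg\tau_*M=\deg M-e_1$ and into the gap bound $e_1$ in conditions~(2),(3). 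The missing step is the second half of \Cref{lem:factoring}. If $e_4>e_1+e_3$, then $M=\cO\oplus\dots\oplus\cO(-e_3)$ is stable under multiplication by $\cO(-e_1)$. So the rank-$4$ generic fiber $M_\eta$ is a vector space over the field generated by $\mathcal{A}_\eta$, whose degree must then divide $\gcd(4,6)=2$. Hence $\cO\oplus\cO(-e_1)$ is a subalgebra in this case too.

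\textbf{Second gap: the semistable case.} You never separate it out. If $W^\vee$ is stable, a maximal-degree line subbundle has $\deg L<\deg(W^\vee/L)$, so the ``destabilizing order'' gives the reverse of condition~(1). You cannot swap the roles of $L$ and $W^\vee/L$, because multiplication only constrains $L^{\otimes 2}\to W^\vee/L$. The paper needs a separate input here, \Cref{lem:semistable-splitting}: pushforwards of semistable bundles along a degree-$d$ cover have splitting gap at most $\frac{2}{d}(g+d-1)$. This gives $e_5-e_2\le e_1$, and \Cref{lem:containment-stable} finishes.

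\textbf{Third gap: non-split extensions in the unstable case.} Your plan~A does not work. Condition~(1) only compares the total degrees of $\tau_*L$ and $\tau_*(W^\vee/L)$. Vanishing of $\Ext^1(\tau_*(W^\vee/L),\tau_*L)$ instead needs every summand of $\tau_*L$ to have degree at least that of every summand of $\tau_*(W^\vee/L)$ minus one, and the summands can interleave. Without splitting, the two pieces do not induce a partition of $\{e_2,\dots,e_5\}$ at all, so ``their degrees give the admissible partition'' is not the right claim. Degenerating the dualized extension to the split one gives only, for the ordered degrees $c_1\le\dots\le c_4$ of the split bundle, $c_1\le e_2$, $c_1+c_2\le e_2+e_3$ and $c_1+c_2+c_3\le e_2+e_3+e_4$. \Cref{lem:extensions} then has to case on the interleaving pattern:
\begin{itemize}
\item In the block case the partition is $\{2,3\}\sqcup\{4,5\}$.
\item In the nested cases one gets $e_5-e_2\le e_1$ and falls back on \Cref{lem:containment-stable}.
\item In the interleaved case one uses surjectivity onto the quotient to identify $e_2$ and $e_5$ with extreme degrees of the two pieces. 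One then takes $\{2,4\}\sqcup\{3,5\}$, with condition~(4) coming from $e_2>2e_1$ rather than from the multiplication inequality.
\end{itemize}
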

The bounds in \cite[Theorem 1.26]{Vemulapalli:bounds} imply that $\vec{e}_\pi \in \mathcal{P}_2 \cup \mathcal{P}_3$. Therefore to prove \Cref{thm:containment} it is sufficient to prove the following proposition.

\begin{proposition}
\label{prop:constraints}
If $\vec{e}_\pi \in \mathcal{P}_2 \setminus \mathcal{P}_3$, then $\vec{e}_\pi \in \mathcal{Q}$.
\end{proposition}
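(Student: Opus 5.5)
Assume $\vec{e}_\pi \in \mathcal{P}_2 \setminus \mathcal{P}_3$, so that $e_2 > 2e_1$ or $e_4 > e_1 + e_3$.

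\textbf{Step 1: produce a factorization through a double cover.} Argue as in \Cref{ex:multiplication-constraints}, using the multiplication constraints of \cite[Section~1.3]{Vemulapalli:bounds}, and invoke \Cref{lem:factoring}. The conclusion is that $\pi = \tau \circ \gamma$ with $\tau \colon X \to \PP^1$ of degree $2$ and $\gamma \colon C \to X$ of degree $3$. For instance, if $e_2 > 2e_1$, then $m(\cO(-e_1)^{\otimes 2})$ lands in $\cO \oplus \cO(-e_1)$, so this is a quadratic subalgebra $\mathcal{A}$. Write $\tau_*\cO_X = \cO \oplus \cO(-e_h)$, where $e_h$ is one of the $e_i$; in the case $e_2>2e_1$ we have $e_h = e_1$.

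\textbf{Step 2: decompose.} Write $\gamma_*\cO_C = \cO_X \oplus V^\vee$, where $V = \mathcal{E}_\gamma$ is a rank $2$ bundle on $X$. Then
\[
\pi_*\cO_C = \tau_*\cO_X \oplus \tau_* V^\vee,
\]
and $\tau_*V^\vee$ has rank $4$ with splitting type $(-e_i,-e_j,-e_k,-e_\ell)$ for $\{i,j,k,\ell\}$ the complement of $\{h\}$ in $\{1,\dots,5\}$. The key structural input is the trace-zero decomposition together with the $\iota$-action. Alternatively, filter $V$ by its maximal destabilizing line subbundle $L \subset V$, giving $0 \to L \to V \to M \to 0$ on $X$. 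Then $\tau_*L^\vee$ and $\tau_*M^\vee$ are rank $2$ bundles whose splitting types pair the $e$'s as $\{e_k,e_\ell\}$ and $\{e_i,e_j\}$ (up to extension issues, handled by semicontinuity and the fact that the pushforward of an extension has invariants dominated appropriately).

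\textbf{Step 3: extract the four conditions.}
\begin{itemize}
\item \emph{Conditions \ref{cond:2} and \ref{cond:3}.} For a line bundle $N$ on the hyperelliptic curve $X$, write $\tau_*N = \cO(a)\oplus\cO(b)$ with $a \le b$. Multiplication by $\tau_*\cO_X$ gives a map $\cO(-e_1)\otimes\cO(a)\to\cO(b)$ that is nonzero, since $\tau_* N$ is rank $1$ over the integral algebra $\tau_*\cO_X$. Hence $b - a \le e_1$, which yields exactly these two conditions.
\item \emph{Condition \ref{cond:1}.} This orders the two pairs, i.e.\ chooses which piece is $L$ and which is $M$. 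It follows from $\deg L \ge \deg M$, since $\deg \tau_* N = \deg N + \deg \tau_*\cO_X$.
\item \emph{Condition \ref{cond:4}.} This is the new cubic-algebra constraint. A cubic algebra over $X$ with Tschirnhausen bundle $V$ corresponds to a nonzero section of $\Sym^3 V \otimes \det V^\vee$ (the binary cubic form), and irreducibility of $C$ forces this form not to vanish on the coefficient $L^3 \otimes \det V^\vee = L^2 \otimes M^\vee$. Equivalently, analogously to $e_2 \le 2e_1$, the map $L^{\vee\otimes 2} \to M^\vee$ induced by multiplication is nonzero, or else $L^\vee$ would generate a quadratic subalgebra, impossible in degree $3$. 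Hence $\deg M \le 2\deg L$ on $X$. Translating via $\deg \tau_*N = \deg N - e_1$ together with $\sum$-relations gives $e_k+e_\ell+e_1 \le 2(e_i+e_j)$, after matching which pair corresponds to $L$.
\end{itemize}

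\textbf{Main obstacle.} I expect two sources of difficulty. The first is bookkeeping: correctly identifying $e_h$ as the invariant coming from $\tau_*\cO_X$, and checking that the remaining four split as claimed into $\{i,j\}\sqcup\{k,\ell\}$, covering both cases $e_2>2e_1$ and $e_4>e_1+e_3$. The second is handling the case where $V$ is not split on $X$, so that $\tau_*V^\vee$ need not equal $\tau_*L^\vee \oplus \tau_*M^\vee$. To deal with this, I would try to show that the extension still splits after pushforward, or work with the filtration directly, arguing that the inequalities survive because Ext classes only lower the spread.

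The indexing needs care here. In the case $e_2>2e_1$, $h=1$ and the partition lives in $\{2,3,4,5\}$. In the case $e_4 > e_1 + e_3$ one must check that $e_1$ is still the invariant from $X$, presumably via $\mathcal{P}_2$'s condition $e_3 \le e_1+e_2$ and \Cref{lem:factoring}.
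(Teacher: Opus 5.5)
Your architecture matches the unstable branch of the paper's proof: factor through $X$ via \Cref{lem:factoring}, filter the rank-$2$ bundle, get a new inequality by multiplying a subbundle into a quotient, then push forward. However, both multiplication arguments in Step~3 are run in the direction that gives nothing.

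For $\tau_*N=\cO(a)\oplus\cO(b)$ with $a\le b$, a nonzero map $\cO(-e_1)\otimes\cO(a)\to\cO(b)$ only says $b\ge a-e_1$. You need the component $\cO(-e_1)\otimes\cO(b)\to\cO(a)$, which is nonzero because otherwise $\cO(b)$ would be a $\tau_*\cO_X$-submodule.

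The same reversal breaks condition~\ref{cond:4}, which is the heart of the proposition. With $0\to L\to V\to M\to 0$, the trace-zero part $V^\vee$ of $\gamma_*\cO_C$ has line \emph{sub}bundle $M^\vee$ and quotient $L^\vee$. So the multiplication map that exists without a splitting is $(M^\vee)^{\otimes 2}\to L^\vee$. It is nonzero, since otherwise $\cO_X\oplus M^\vee$ would be a quadratic subalgebra of a cubic algebra, and it gives $\deg L\le 2\deg M$. Equivalently, the well-defined coefficient of the cubic form is its image in the top graded quotient $M^{\otimes 3}\otimes\det V^\vee\cong M^{\otimes 2}\otimes L^\vee$ of $\Sym^3 V\otimes\det V^\vee$; $L^{\otimes 3}\otimes\det V^\vee$ is a subbundle, not a quotient. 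Your inequality $\deg M\le 2\deg L$ is vacuous, because $\deg M\le\deg L$ and $\deg L>0$. With the pairing you fixed for condition~\ref{cond:1}, it translates to $e_i+e_j+e_1\le 2(e_k+e_\ell)$, not to condition~\ref{cond:4}.

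The more serious gaps are two cases your argument never reaches.

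\emph{Non-split extensions.} The sequence $0\to\tau_*M^\vee\to\tau_*V^\vee\to\tau_*L^\vee\to 0$ genuinely need not split. So Step~3 constrains only the splitting types of $\tau_*L^\vee$ and $\tau_*M^\vee$, not $\vec e$. The idea that ``Ext classes only lower the spread'' does not by itself produce an admissible partition. Closing this gap is the content of \Cref{lem:extensions}. Degenerating the extension class to $0$ and using semicontinuity of $h^0$ gives $c_1\le e_2$, $c_1+c_2\le e_2+e_3$, $c_1+c_2+c_3\le e_2+e_3+e_4$ for the sorted degrees $c_1\le\dots\le c_4$ of $(\tau_*L^\vee)^\vee\oplus(\tau_*M^\vee)^\vee$. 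The partition is then chosen according to how the two pairs of degrees sit relative to each other. If they are separated, one gets $\{2,3\}\sqcup\{4,5\}$ directly. If one pair nests inside the other, then $e_5-e_2\le e_1$ and \Cref{lem:containment-stable} applies. If they alternate, one pins down $e_2$ and $e_5$ from surjectivity and injectivity in the extension, deduces $e_2>2e_1$, and checks that $\{2,4\}\sqcup\{3,5\}$ is admissible.

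\emph{Semistable $V$.} You never treat this case. There is then no destabilizing subbundle, a maximal line subbundle has $\deg L\le\deg M$, and the only available multiplication inequality is vacuous. The paper handles it with \Cref{lem:semistable-splitting}: the pushforward of a semistable bundle along a degree-$d$ cover has spread at most $\frac{2}{d}(g+d-1)$, which here equals $e_1$. This gives $e_5\le e_1+e_2$, and \Cref{lem:containment-stable} finishes.

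Your indexing worry is moot: \Cref{lem:factoring} shows that $\cO\oplus\cO(-e_1)$ is the quadratic subalgebra in both cases.
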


We first give a brief outline of the proof of \Cref{prop:constraints} before giving the proof in \Cref{sec:prop-proof}. Let $\pi \colon C \rightarrow \PP^1$ be a sextic cover with $\vec{e}_\pi \in \mathcal{P}_2 \setminus \mathcal{P}_3$. \Cref{lem:factoring} shows that $\pi$ factors as $\tau \circ \gamma$ through a degree $2$ cover $\tau \colon X \to \PP^1$; consider the vector bundle $\gamma_* \cO_C$. This vector bundle splits as $\cO_X \oplus {V}$ for some rank $2$ vector bundle $V$ on $X$. The vector bundle $V$ is either semistable or unstable.

If ${V}$ is semistable, \Cref{lem:semistable-splitting} bounds the difference of summands of $\tau_* V$. In \Cref{lem:containment-stable} we give an explicit combinatorial construction of an admissible partition to conclude that $\vec{e}_\pi \in \mathcal{Q}$.

If ${V}$ is unstable then its Harder--Narasimhan filtration has the form $0 \subset {L} \subset {V}$ for a line bundle $L$ on $X$. In this case, the exact sequence of vector bundles $0 \rightarrow L \rightarrow  V \rightarrow V/L \rightarrow 0$ pushes forward to an exact sequence of vector bundles 
\begin{equation}
\label{eqn:potentially-nonsplit}
0 \rightarrow \tau_* L \rightarrow  \tau_*V \rightarrow \tau_*(V/L) \rightarrow 0.
\end{equation}
It is possible that sequence \eqref{eqn:potentially-nonsplit} does not split. Nevertheless, using properties of the Harder--Narasimhan filtration and the $\cO_X$-algebra structure of $\gamma_* \cO_C$, we directly prove several linear inequalities on the summands of $\tau_* L$ and $\tau_*(V/L)$. Now, explicit manipulation of these linear inequalities (\Cref{lem:extensions}) shows that $\vec{e}_\pi \in \mathcal{Q}$. 

\subsection{Proof of \texorpdfstring{\Cref{prop:constraints}}{Proposition 2.3}} \label{sec:prop-proof}
In order to prove \Cref{prop:constraints} we begin with several lemmas. We first prove that those sextic covers whose scrollar invariants are \emph{not} contained in $\mathcal{P}_3$ factor through quadratic subcovers.

\begin{lemma}[Factoring through a quadratic subcover]
\label{lem:factoring}
Let $\pi \colon C \rightarrow \PP^1$ be a sextic cover with scrollar invariants $\vec{e}_\pi$. If $\vec{e}_\pi \in \mathcal{P}_2 \setminus \mathcal{P}_3$, then $\pi$ factors through a hyperelliptic curve
\[
\begin{tikzcd}
    C \arrow{r}{\gamma} \arrow[swap]{dr}{\pi} & X \arrow{d}{\tau} \\
     & \PP^1
  \end{tikzcd}
\]
of genus $g_X = e_1 - 1$. 
\end{lemma}
\begin{proof}
Fix a splitting $\pi_* \cO_C = \cO \oplus \cO(-e_1) \oplus \dots \oplus \cO(-e_5)$ and consider the multiplication map
\[
    m \colon (\pi_* \cO_C) \otimes (\pi_* \cO_C) \rightarrow (\pi_* \cO_C)
\]
We claim that $\mathcal{A} = \cO \oplus \cO(-e_1)$ is a quadratic subalgebra of $\pi_* \cO_C$, after which the lemma follows immediately upon setting $X = \relSpec_{\PP^1}\mathcal{A}$. 
First note that because $\vec{e}_\pi \in \mathcal{P}_2 \setminus \mathcal{P}_3$, either $e_2 > 2e_1$ or $e_4 > e_1 + e_3$. Now if $e_2 > 2e_1$ then
\[
    m(\mathcal{A} \otimes_k \mathcal{A}) \subseteq \mathcal{A}
\]
and it follows that $\mathcal{A}$ is a quadratic subalgebra of $\pi_* \cO_C$.

Similarly if $e_4 > e_1 + e_3$, then letting ${M} = \cO \oplus \dots \oplus \cO(-e_3)$ we have 
\begin{equation}
\label{eqn:inclusion}
m\left (\mathcal{A} \otimes_k {M}\right) \subseteq {M}.
\end{equation}
Let $K$ and $k$ be the function fields of $C$ and $\PP^1$ respectively, and identify $K$ as a $k$-algebra with the fiber of $\pi_* \cO_C$ at the generic point of $\PP^1$. Denote by $\mathcal{A}_\eta$ and $M_\eta$ the fibers of $\mathcal{A}$ and ${M}$ at the generic point $\eta$ of $\PP^1$. Note that $\mathcal{A}_\eta$ and $M_\eta$ are $k$-subvector spaces of $K$ of dimension $2$ and $4$ respectively and that the specialization $m \colon \mathcal{A}_\eta \otimes_k M_\eta \rightarrow K$ is multiplication in the field $K$. But \eqref{eqn:inclusion} implies that $m(\mathcal{A}_\eta \otimes M_\eta) = M_\eta$, and so $M$ is a vector space over the subfield $k(\mathcal{A}_\eta) \subset K$. It follows that $k(\mathcal{A}_\eta)/k$ has degree dividing $2$ since $\dim_k M_\eta = 4$ and $\dim_k K = 6$. But $\deg k(\mathcal{A}_\eta) \geq \dim_k \mathcal{A}_\eta = 2$ and therefore $\mathcal{A}_\eta = k(\mathcal{A}_\eta)$ which implies that $\mathcal{A}$ is a quadratic subalgebra of $\pi_* \cO_C$. 
\end{proof}

For a vector bundle $V$ on a curve, let $\lambda(V) = \deg(V)/\rank(V)$ denote the slope of $V$. In the case when $V$ is semistable the following \Cref{lem:semistable-splitting,lem:containment-stable} will be used to deduce \Cref{prop:constraints}.

\begin{lemma}
\label{lem:semistable-splitting}
Let $\pi \colon C \rightarrow \PP^1$ be a degree $d$ cover of the projective line by a genus $g$ curve, and let ${E}$ be a semistable vector bundle of rank $r$ on $C$. Write
\[
    \pi_* {E} \simeq \bigoplus_{i = 1}^r \cO(a_i)
\]
for $a_1 \leq \dots  \leq a_r$. Then $a_r - a_1 \leq \frac{2}{d}( g+d - 1)$.
\end{lemma}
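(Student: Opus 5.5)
The plan is to bound $a_r$ from above and $a_1$ from below separately in terms of the slope $\mu \colonequals \lambda(E)$, using adjunction and semistability, and then subtract. Write $H \colonequals \pi^*\cO(1)$, a line bundle of degree $d$ on $C$. We use one basic fact about a semistable bundle $F$ on $C$: if $M$ is a line bundle with a nonzero map $M \to F$, then $\deg M \le \lambda(F)$. Indeed, the saturation of the image is a line subbundle of $F$ of degree at least $\deg M$, and semistability bounds its degree by $\lambda(F)$.

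\emph{Upper bound on $a_r$.} The summand $\cO(a_r) \subset \pi_*E$ gives a nonzero element of $\mathrm{Hom}_{\PP^1}(\cO(a_r), \pi_* E)$. By the adjunction $\pi^* \dashv \pi_*$ this equals $\mathrm{Hom}_C(H^{\otimes a_r}, E)$. Applying the basic fact to $E$ gives
\[
d\,a_r \le \mu .
\]

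\emph{Lower bound on $a_1$.} We dualize. Since $\pi$ is finite and flat, relative (Grothendieck) duality gives
\[
(\pi_*E)^\vee \simeq \pi_*\bigl(E^\vee \otimes \omega_{C/\PP^1}\bigr), \qquad \omega_{C/\PP^1} \simeq \omega_C \otimes H^{\otimes 2}.
\]
The projection $\pi_*E \to \cO(a_1)$ onto a summand dualizes to an inclusion $\cO(-a_1) \hookrightarrow \pi_*(E^\vee \otimes \omega_C \otimes H^{\otimes 2})$. Twisting by $\cO(-2)$ and using the projection formula gives a nonzero map $\cO(-a_1-2) \to \pi_*(E^\vee \otimes \omega_C)$. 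By adjunction this corresponds to a nonzero map $H^{\otimes(-a_1-2)} \to E^\vee \otimes \omega_C$. The bundle $E^\vee \otimes \omega_C$ is semistable, being a dual of a semistable bundle twisted by a line bundle, and its slope is $-\mu + 2g - 2$. The basic fact therefore gives
\[
-d(a_1+2) \le -\mu + 2g-2, \qquad\text{i.e.}\qquad d\,a_1 \ge \mu - 2g + 2 - 2d .
\]

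\emph{Conclusion.} Subtracting the two inequalities gives $d(a_r - a_1) \le 2g - 2 + 2d$, which is the claim $a_r - a_1 \le \frac{2}{d}(g+d-1)$. The only step needing care is the relative duality identification $(\pi_*E)^\vee \simeq \pi_*(E^\vee \otimes \omega_C) \otimes \cO(2)$. It follows from $\omega_{\PP^1} = \cO(-2)$ and $\pi^!\cO = \omega_{C/\PP^1}$ for finite flat $\pi$. The rest is formal.
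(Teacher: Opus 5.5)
Your proof is correct and follows the paper's argument. You bound $d a_r \le \lambda(E)$ via the map $\pi^*\cO(a_r) \to E$ and semistability, then apply the same reasoning to $E^\vee \otimes \omega_{C/\PP^1}$ through relative duality to get $-d a_1 \le -\lambda(E) + 2g-2+2d$. The differences are only cosmetic: you get nonvanishing directly from the adjunction bijection where the paper checks it at the generic point, you pass to the saturation of the image rather than calling the image a subbundle, and you untwist by $\cO(-2)$ to work with $\omega_C$ instead of $\omega_{C/\PP^1}$.
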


\begin{remark}
    We note that the result \cite[Theorem~1.6]{BSTTTZ_bounds} is the arithmetic analogue of \Cref{lem:semistable-splitting} (in the case where $E = \cO_C$) under the dictionary between degree $d$ number fields $K/\QQ$ and degree $d$ covers $C \to \PP^1$ (see~\cite[Section~8]{VV} for a precise description of this dictionary).
\end{remark}

\begin{proof}[Proof of \Cref{lem:semistable-splitting}]
Fix a splitting $\pi_* {E} = \bigoplus_{i = 1}^r \cO(a_i)$. Since $\pi$ is flat, the inclusion $\cO(a_r) \subseteq \pi_* {E}$ induces an inclusion $\pi^* \cO(a_r) \subseteq \pi^* \pi_* {E}$. Composing this with the projection formula $\pi^* \pi_* {E} \rightarrow {E}$ yields a morphism
\[
  \phi \colon \pi^* \cO(a_r) \rightarrow {E}.
\]

We first claim that $\phi$ is nonzero. Let $K$ and $k$ be the function fields of $C$ and $\PP^1$ respectively. The fiber ${E}_\xi$ of $E$ at the generic point $\xi \in C$ is an $r$-dimensional $K$-vector space. The morphism $\pi^* \pi_* {E} \rightarrow {E}$ arising from the projection formula specializes to the multiplication map $K \otimes_k E_\xi \rightarrow E_\xi$ given by $x \otimes y \mapsto xy$. The fiber of $\cO(a_r)$ at the generic point $\eta \in \PP^1$ is a $1$-dimensional $k$-vector subspace $W \subset E_\xi$. Therefore on the generic point, $\phi$ specializes to the nonzero linear map $K \otimes_k W \rightarrow E_\xi$ sending $x \otimes y \mapsto xy$. Therefore, $\phi$ is nonzero.

A nonzero morphism from a line bundle to a vector bundle is an inclusion and therefore $\phi(\pi^* \cO(a_r))$ is a subbundle of ${E}$. Since ${E}$ is semistable, we have 
\[
  \lambda(\phi(\pi^* \cO(a_r))) \leq \lambda({E}).
\]
On the other hand
\[
  \lambda(\phi(\pi^* \cO(a_r))) \geq \lambda(\pi^* \cO(a_r)) = da_r.
\]
Combining the above two inequalities, we obtain
\begin{equation}
  \label{eqn:mu-E-lower}
  da_r \leq \lambda({E}).
\end{equation}

Let $\omega_{C/\PP^1}$ be the relative dualizing sheaf of $\pi \colon C \to \PP^1$. Since $E$ is semistable and $\omega_{C/\PP^1}$ is a line bundle the bundle ${E}^{\vee} \otimes \omega_{C/\PP^1}$ is semistable. Relative Serre duality implies that
\[
  \pi_*( {E}^{\vee} \otimes \omega_{C/\PP^1}) \simeq (\pi_* {E})^{\vee}.
\]
A similar argument to the above shows that $-da_1 \leq \lambda(\pi_* E)^\vee = \lambda({E}^{\vee} \otimes \omega_{C/\PP^1})$. Since the slope is additive over the tensor product with a line bundle we have
\begin{equation}
  \label{eqn:bound-with-omega}
  -da_1 \leq -\lambda({E}) + \deg(\omega_{C/\PP^1}).
\end{equation}
Taking degrees in the relation $\omega_{C/\PP^1} = \omega_C \otimes (\pi^* \omega_{\PP^1})^{\vee}$ yields 
\begin{equation}
  \label{eqn:rel-can-deg}
  \deg(\omega_{C/\PP^1}) = 2g - 2 + 2d .
\end{equation}
Combining \eqref{eqn:bound-with-omega} and \eqref{eqn:rel-can-deg} we obtain
\begin{equation}
  \label{eqn:mu-E-upper}
  \lambda({E}) \leq da_1 + 2(g + d- 1).
\end{equation}
and combining \eqref{eqn:mu-E-lower} and \eqref{eqn:mu-E-upper} gives $a_r - a_1 \leq \frac{2}{d}(g + d- 1)$, as required.
\end{proof}

\begin{lemma}
  \label{lem:containment-stable}
  Suppose that $\vec{e} \in \mathcal{P}_2 \setminus \mathcal{P}_3$ has $e_5 \leq e_1 + e_2$. Then $\vec{e} \in \mathcal{Q}$.
\end{lemma}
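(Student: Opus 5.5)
The plan is to show that the partition $\{2,3\}\sqcup\{4,5\}$, with $\{i,j\}=\{2,3\}$ and $\{k,\ell\}=\{4,5\}$, is always admissible for $\vec{e}$ under these hypotheses. Since $\vec{e}\in\mathcal{P}_2\setminus\mathcal{P}_3$ by assumption, this gives $\vec{e}\in\mathcal{Q}$ directly from \Cref{def:admissible-partition}. The whole argument is linear-inequality bookkeeping using the defining inequalities of $\mathcal{P}_2$, the failure of $\mathcal{P}_3$, and the extra hypothesis $e_5\le e_1+e_2$.

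First I check the three easy conditions.
\begin{itemize}
\item Condition \ref{cond:1} reads $e_2+e_3\le e_4+e_5$. It holds because $e_2\le e_4$ and $e_3\le e_5$.
\item Condition \ref{cond:2} reads $e_2\le e_3\le e_2+e_1$. This is the ordering together with the $\mathcal{P}_2$ inequality $e_3\le e_1+e_2$.
\item Condition \ref{cond:3} reads $e_4\le e_5\le e_4+e_1$. This is the ordering together with the $\mathcal{P}_2$ inequality $e_5\le e_1+e_4$.
\end{itemize}
Only condition \ref{cond:4}, namely $e_1+e_4+e_5\le 2e_2+2e_3$, requires real work. I expect it to be the main (mild) obstacle.

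Next I pin down which $\mathcal{P}_3$ inequality fails. The defining inequalities of $\mathcal{P}_3$ shared with $\mathcal{P}_2$ hold automatically, so $\vec{e}\notin\mathcal{P}_3$ means that $e_2>2e_1$ or $e_4>e_1+e_3$. The second alternative is impossible here, since
\[
e_4\le e_5\le e_1+e_2\le e_1+e_3 .
\]
Hence $e_2>2e_1$.

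Finally I verify \ref{cond:4} by contradiction. Suppose $e_1+e_4+e_5>2e_2+2e_3$. Using $e_5\le e_1+e_2$ gives
\[
e_4>e_2+2e_3-2e_1 .
\]
Using also $e_4\le e_1+e_2$ gives $3e_1>2e_3\ge 2e_2$, so $e_2<\tfrac32 e_1$. Since $e_1\ge 1$, this contradicts $e_2>2e_1$. Therefore \ref{cond:4} holds, the partition is admissible, and $\vec{e}\in\mathcal{Q}$.

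If some step turned out to be subtler than expected, the fallback would be the partitions $\{3,4\}\sqcup\{2,5\}$ in either orientation. For these, condition \ref{cond:3} (respectively \ref{cond:2}) for the pair $\{2,5\}$ is exactly the hypothesis $e_5\le e_1+e_2$. However, I expect the first partition to suffice.
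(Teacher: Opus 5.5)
Your proof is correct and follows the paper's argument. You use the same partition $\{2,3\}\sqcup\{4,5\}$ and the same deduction that $e_2>2e_1$, and you get condition~(4) from the same ingredients ($e_4,e_5\le e_1+e_2$ and $e_2>2e_1$), phrased as a contradiction rather than the paper's direct chain $(e_4-e_2)+(e_5-e_3)+e_1\le 3e_1<e_2+e_3$; the fallback partitions are not needed.
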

\begin{proof}
Because $\vec{e} \in \mathcal{P}_2 \setminus \mathcal{P}_3$, we must have either $e_2 > 2e_1$ or $e_4 > e_1 + e_3$. By assumption $e_5 \leq e_1 + e_2$ so $e_4 \leq e_1 + e_3$ and hence $e_2 > 2e_1$. 

It suffices to show that $\{2,3\} \sqcup \{4,5\}$ is an admissible partition for $\vec{e}$ (in the sense of \Cref{def:admissible-partition}). Conditions~\ref{cond:1}, \ref{cond:2}, and \ref{cond:3} of \Cref{def:admissible-partition} follow immediately.

To see Condition~\ref{cond:4}, we claim that $(e_k - e_i) + (e_{\ell} - e_j) + e_1 \leq e_i + e_j$. Since $e_5 \leq e_2 + e_1$ we have $e_k - e_i \leq e_1$ and $e_{\ell} - e_j \leq e_1$. Moreover, $e_2 > 2e_1$ gives $4e_1 < e_i + e_j$. Therefore
\[
  (e_k - e_i) + (e_{\ell} - e_j) + e_1 \leq 3e_1 < e_i + e_j
\]
as required.
\end{proof}

\Cref{lem:semistable-splitting,lem:containment-stable} will be sufficient to deduce \Cref{prop:constraints} when $V$ is semistable. When $V$ is unstable we will need the following lemma instead. 

\begin{lemma}
  \label{lem:extensions}
  Let $\vec{e} \in \mathcal{P}_2 \setminus \mathcal{P}_3$ and write $E = \cO(e_2) \oplus \cO(e_3) \oplus \cO(e_4) \oplus \cO(e_5)$. Let
  \begin{equation}
    \label{eqn:exact-seq}
    0 \rightarrow F \rightarrow E \rightarrow G \rightarrow 0
  \end{equation}
  be an exact sequence of vector bundles on $\PP^1$ with $\rank(F) = \rank(G) = 2$ and  $\lambda(G) \leq \lambda(F)$. Write $F = \cO(f_1) \oplus \cO(f_2)$ and $G = \cO(g_1) \oplus \cO(g_2)$ for integers $f_1 \leq f_2 \leq e_1 + f_1$ and $g_1 \leq g_2 \leq e_1 + g_1$. If $f_1+f_2 + e_1 \leq 2g_1 + 2g_2$ then $\vec{e} \in \mathcal{Q}$.
\end{lemma}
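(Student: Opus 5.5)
The plan is to produce an explicit admissible partition of $\{2,3,4,5\}$. The input is the numerical shadow of the exact sequence \eqref{eqn:exact-seq}. Since we are already given $\vec{e}\in\mathcal{P}_2\setminus\mathcal{P}_3$, this is all that is needed to conclude $\vec{e}\in\mathcal{Q}$.

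\textbf{Step 1: inequalities from the sequence.} I would first record the standard facts about extensions of split bundles on $\PP^1$.
\begin{enumerate}[label=(\alph*)]
\item $E$ is a generization of $F\oplus G$. Hence the partial sums of the largest summands of $E$ are bounded by those of $F\oplus G$, with equality for the total degree: $e_2+e_3+e_4+e_5 = f_1+f_2+g_1+g_2$.
\item Because $F\subset E$ is a subbundle, $f_2\le e_5$, $f_1\le e_4$ and $f_1+f_2\le e_4+e_5$.
\item Because $G$ is a quotient of $E$, $g_1\ge e_2$ and $g_1+g_2\ge e_2+e_3$.
\item From $\lambda(G)\le\lambda(F)$ we get $g_1+g_2\le f_1+f_2$.
\end{enumerate}
Together with the hypotheses $f_2-f_1\le e_1$, $g_2-g_1\le e_1$ and $f_1+f_2+e_1\le 2(g_1+g_2)$, these are the only inputs. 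In particular $F$ and $G$ are each ``balanced up to $e_1$'', and their degrees are within a factor controlled by $e_1$.

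\textbf{Step 2: the default partition.} For $\{i,j\}=\{2,3\}$ and $\{k,\ell\}=\{4,5\}$, Conditions \ref{cond:1}--\ref{cond:3} of \Cref{def:admissible-partition} hold automatically on $\mathcal{P}_2$, because $e_3\le e_1+e_2$ and $e_5\le e_1+e_4$. So only Condition \ref{cond:4} is at stake, and it can be rewritten as
\[
D+e_1\le 3(e_2+e_3), \qquad D=\sum_{m\ge2} e_m .
\]
The hypothesis gives $D+e_1\le 3(g_1+g_2)$. So we are done whenever $g_1+g_2=e_2+e_3$, and in particular whenever the sequence splits with $G=\cO(e_2)\oplus\cO(e_3)$.

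\textbf{Step 3: the remaining case.} Suppose $g_1+g_2>e_2+e_3$ and Condition \ref{cond:4} fails for $\{2,3\}\sqcup\{4,5\}$. I would then try the partitions $\{2,5\}\sqcup\{3,4\}$ and $\{2,4\}\sqcup\{3,5\}$, ordering each so that Condition \ref{cond:1} holds.
\begin{itemize}
\item The intuition is that $\deg G$ sits strictly between $e_2+e_3$ and $D/2$. By (a), the balanced pieces of $F$ and $G$ must then redistribute the summands of $E$ so that some ``crossed'' pair has sum close to $\deg G$.
\item Concretely, I would use the dominance inequalities together with $g_1\ge e_2$ and $f_2\le e_5$ to bound $e_5-e_2$ and $e_4-e_3$ by $e_1$. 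This should give Conditions \ref{cond:2} and \ref{cond:3} for the crossed partition.
\item I would then verify Condition \ref{cond:4} by combining $f_1+f_2+e_1\le 2(g_1+g_2)$ with comparisons between $\deg G$ and the sum of the new pair $\{i,j\}$.
\end{itemize}
The failure of Condition \ref{cond:4} for $\{2,3\}\sqcup\{4,5\}$ should also be usable as an extra inequality, namely $e_4+e_5+e_1>2(e_2+e_3)$.

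\textbf{Main obstacle.} I expect the hardest part to be Step 3, for two reasons. Nonsplit extensions only give majorization-type inequalities, not equalities. And it is not a priori clear which crossed partition works, so one must split into sub-cases according to the relative positions of $g_1,g_2$ and $e_3,e_4$. My first attempt would be to parametrize $\deg G=e_2+e_3+t$ with $t>0$. I would then show that $t\le e_4-e_3$ forces $\{2,4\}\sqcup\{3,5\}$ to be admissible, and that otherwise $\{2,5\}\sqcup\{3,4\}$ is admissible, checking each condition as a linear inequality among the data from Step 1.
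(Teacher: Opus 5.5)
Your Steps 1 and 2 are correct, and Step 2 is exactly the paper's case (i). The gap is Step 3: it contains no argument, and the plan sketched there cannot work as stated. Any partition that pairs $2$ with $5$ needs $e_5\le e_2+e_1$ (Condition (2) or (3)). But for $\vec e\in\mathcal{P}_2\setminus\mathcal{P}_3$ that inequality already makes $\{2,3\}\sqcup\{4,5\}$ admissible, by \Cref{lem:containment-stable}. So $\{2,5\}\sqcup\{3,4\}$ can never be the partition that rescues a case where the default fails. For the same reason, whenever you manage to "bound $e_5-e_2$ by $e_1$" you have contradicted your standing assumption rather than produced a crossed partition.

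Your sub-case claim also does not follow from the inputs you list, which never use $\vec e\notin\mathcal{P}_3$. Take $\vec e=(5,5,7,9,11)$, $F=\cO(6)\oplus\cO(11)$, $G=\cO(5)\oplus\cO(10)$: split off $\cO(5)$ and $\cO(11)$, and embed $\cO(6)\to\cO(7)\oplus\cO(9)$ by sections of $\cO(1)$ and $\cO(3)$ with no common zero. Then all of your Step 1, the lemma's hypotheses, $\vec e\in\mathcal{P}_2$, $g_1+g_2>e_2+e_3$, and the failure of Condition (4) for the default partition hold. Here $t=3>e_4-e_3=2$, yet $e_5-e_2=6>e_1$ and $\{2,5\}\sqcup\{3,4\}$ is not admissible.

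The missing idea is to split cases by how the summands of $F$ and $G$ interleave in $F\oplus G$, not by $g_1,g_2$ versus $e_3,e_4$. Write $c_1\le\dots\le c_4$ for the summands of $F\oplus G$; the degeneration gives $c_1\le e_2$, $c_1+c_2\le e_2+e_3$, $c_3+c_4\ge e_4+e_5$, and $c_4\ge e_5$. In your remaining case $c_1+c_2<g_1+g_2$, which forces $f_1<g_2$. Together with $g_1+g_2\le f_1+f_2$, only three orders survive:
\begin{itemize}
\item If $g_1\le f_1\le f_2\le g_2$ or $f_1\le g_1\le g_2\le f_2$, the extreme summands come from the same bundle, so $e_5-e_2\le c_4-c_1\le e_1$.
\item If $g_1\le f_1\le g_2\le f_2$, your inequalities $g_1\ge e_2$ and $f_2\le e_5$ pin down $g_1=e_2$ and $f_2=e_5$. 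Hence $f_1\le e_3$ and $g_2\ge e_4$, so $e_4-e_2\le e_1$ and $e_5-e_3\le e_1$.
\end{itemize}
In all three orders $e_4\le e_1+e_3$, so $\vec e\notin\mathcal{P}_3$ forces $e_2>2e_1$. Then $(e_4-e_2)+(e_5-e_3)+e_1\le 3e_1<e_2+e_3$, i.e.\ Condition (4) holds for $\{2,3\}\sqcup\{4,5\}$ after all, so your remaining case is empty. The paper instead handles the first two orders via \Cref{lem:containment-stable} and exhibits $\{2,4\}\sqcup\{3,5\}$ in the interleaved order. That partition in fact works in all three orders even without $\vec e\notin\mathcal{P}_3$, using the spread bounds above together with $e_5\le e_1+e_4$ and $e_1\le e_2$.
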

\begin{proof}
The exactness of the sequence \eqref{eqn:exact-seq} implies that
\begin{equation}
\label{eqn:equality}
    g_1 + g_2 + f_1 + f_2 = e_2 + e_3 + e_4 + e_5.
\end{equation}
The exact sequence \eqref{eqn:exact-seq} gives an extension class $\alpha \in \Ext^1(G,F)$. Consider the vector bundle $\mathcal{E} \rightarrow \PP^1 \times \AA^1$ where the restriction to $\PP^1 \times \{t\}$ is given by the extension class $t\alpha \in \Ext^1(G,F)$. Because all nonzero scalar multiples of $\alpha$ give rise to isomorphic vector bundles if $t \neq 0$, we have an isomorphism $\mathcal{E}_t \simeq E$. On the other hand, the special fiber at $t = 0$ is $\mathcal{E}_0 \simeq F \oplus G$. 

Now, the upper semi-continuity theorem implies that for any integer $n$ we have
\begin{equation}
\label{eqn:semicontinuity}
     h^0(\PP^1, E(n)) \leq h^0(\PP^1, (F \oplus G)(n)).
\end{equation}
Write $F \oplus G = \cO(c_1) \oplus \dots \oplus \cO(c_4)$ ordered so that $c_1 \leq \dots \leq c_4$. From \eqref{eqn:semicontinuity}, a standard argument now implies that
\begin{equation}
  \label{eqn:two-summands}
  c_1 \leq e_2, \quad \text{ and } \quad c_1 + c_2 \leq e_2 + e_3, \quad \text{ and } \quad c_1 + c_2 + c_3 \leq e_2 + e_3 + e_4.
\end{equation}

There are four possibilities for $(c_1,c_2,c_3,c_4)$. Either
\begin{enumerate}[label=(\roman*)]
\item \label{casei}
  $(c_1,c_2,c_3,c_4) = (g_1,g_2,f_1,f_2)$,
\item \label{caseii}
  $(c_1,c_2,c_3,c_4) = (g_1,f_1,f_2,g_2)$,
\item \label{caseiii}
  $(c_1,c_2,c_3,c_4) = (f_1,g_1,g_2,f_2)$,
\item \label{caseiv}
  $(c_1,c_2,c_3,c_4) = (g_1,f_1,g_2,f_2)$.
\end{enumerate}

In case~\ref{casei}, combining \cref{eqn:equality,eqn:two-summands} gives
\[
    e_4 + e_5 + e_1 \leq f_1+f_2 + e_1 \leq 2g_1 + 2g_2 \leq e_2 + e_3,
\]
and thus $\{2, 3\} \sqcup \{4, 5\}$ is an admissible partition for $\vec{e}$, as required. 

In case~\ref{caseii}, we have $g_1 \leq f_1 \leq f_2 \leq g_2$, so $c_4 - c_1 = g_2 - g_1 \leq e_1$. This implies $e_5 - e_2 \leq e_1$ and \Cref{lem:containment-stable} implies that $\vec{e} \in \mathcal{Q}$. An analogous argument holds in case~\ref{caseiii}.

In case~\ref{caseiv}, we claim that $\{2, 4\} \sqcup \{3, 5\}$ is an admissible partition. The fact that $E \rightarrow G$ is surjective implies that $e_2 \leq g_1$. Combining this with \eqref{eqn:two-summands} implies that $e_2 = g_1$. Dualizing the exact sequence \eqref{eqn:exact-seq} and applying the same argument implies that $e_5 = f_2$. Therefore we have
\begin{equation}
\label{eqn:order}
 e_2 = g_1 \leq f_1 \leq e_3 \leq e_4 \leq g_2 \leq f_2 = e_5.
\end{equation}
The inequality $g_2 - g_1 \leq e_1$ implies $e_4 - e_2 \leq e_1$, and similarly $f_2 - f_1 \leq e_1$ implies $e_5 - e_3 \leq e_1$.  Because $\vec{e} \in \mathcal{P}_2 \setminus \mathcal{P}_3$, we have either $e_4 > e_1 + e_3$ or $e_2 > 2e_1$. From \cref{eqn:order}, we have $e_4 - e_3 \leq g_2 - g_1 \leq e_1$, so indeed $e_2 > 2e_1$. Now
\[
    (e_5 - e_4) + (e_3 - e_2) + e_1 \leq 3e_1 \leq e_2 + e_3
\]
and therefore $\{2, 4\} \sqcup \{3, 5\}$ is an admissible partition, as required.
\end{proof}

\begin{remark}
\Cref{lem:extensions} also follows from \cite[Section~2]{lin}, which gives a combinatorial description of short exact sequences of vector bundles on $\PP^1$.
\end{remark}

\begin{proof}[Proof of \Cref{prop:constraints}]
Suppose that $\vec{e}_\pi \in \mathcal{P}_2 \setminus \mathcal{P}_3$ is the scrollar invariants of a sextic cover $\pi\colon C \rightarrow \PP^1$. By \Cref{lem:factoring} the morphism $\pi$ factors as 
\[
\begin{tikzcd}
    C \arrow{r}{\gamma} \arrow[swap]{dr}{\pi} & X \arrow{d}{\tau} \\
     & \PP^1
  \end{tikzcd}
\]
through a hyperelliptic curve $X$ of genus $g_X = e_1 - 1$. Fix a splitting $\gamma^* \cO_X \simeq \cO_{X} \oplus V$ for some rank $2$ vector bundle $V$ on $X$. We have that
\[
  \tau_* V = \cO(-e_2) \oplus \dots \oplus \cO(-e_5),
\]
If $V$ is semistable, then \Cref{lem:semistable-splitting} shows that $e_5 \leq e_2 + e_1$ and by \Cref{lem:containment-stable} we have $\vec{e}_\pi \in \mathcal{Q}$.

We may therefore assume that $V$ is unstable, in which case it admits a Harder--Narasimhan filtration of the form $0 \subset L \subseteq V$ for a line bundle $L$ on $X$. Pushing forward and dualizing the exact sequence $0 \rightarrow L \rightarrow V \rightarrow V/L \rightarrow 0$ we obtain
\begin{equation}
\label{eqn:dual-exact}
0 \rightarrow (\tau_*(V/L))^{\vee} \to (\tau_* V)^{\vee} \rightarrow ( \tau_* L)^{\vee} \rightarrow 0.
\end{equation}
By construction $\gamma_* \cO_C$ is a cubic $\cO_{X}$-algebra and is thus equipped with a multiplication map $m \colon \gamma_* \cO_C \otimes \gamma_* \cO_C \rightarrow \gamma_* \cO_C$. Precomposing $m$ with the inclusion $L \subseteq \gamma_* \cO_C$ and postcomposing with the quotient morphism $\gamma_* \cO_C \rightarrow V \rightarrow V/L$ yields a map
\begin{equation}
  \label{eqn:key-mult-coeff}
  \overline{m} \colon L \otimes L \rightarrow V/L .
\end{equation}

Note that if $\overline{m}$ is uniformly zero then $m((\cO_X \oplus L) \otimes (\cO_X \oplus L)) \subseteq \cO_X \oplus L$, in which case $\cO_X \oplus L$ is a quadratic subalgebra of the cubic $\cO_X$-algebra $\gamma_* \cO_C$, which is a contradiction. Therefore $\overline{m}$ is nonzero and therefore
\begin{equation}
  \label{eqn:degree}
  2\deg (L) \leq \deg (V/L).
\end{equation}
Choose splittings $(\tau_* (V/L))^{\vee} \simeq \cO(f_1) \oplus \cO(f_2)$ and $(\tau_* L)^{\vee} \simeq \cO(g_1) \oplus \cO(g_2)$. Now, relative Serre duality implies that for a line bundle $E$ on $X$ we have $\deg(\tau_* E) = \deg(E) + 1 - g_X - \deg(\tau)$. Applying this to $L$ and $V/L$, substituting $g_X = e_1 - 1$ and combining with \eqref{eqn:degree} now yields
\begin{equation}
\label{eqn:mult-constraint}
f_1 + f_2 + e_1 \leq 2g_1 + 2g_2.
\end{equation}
Applying \Cref{lem:extensions} to the exact sequence in \eqref{eqn:dual-exact} we see that $\vec{e} \in \mathcal{Q}$, as required. 
\end{proof}

\section{Double covers of triple covers}
\label{sec:triple-double}

The purpose of this section is to prove the following theorem. 
\begin{theorem}
\label{thm:double-cover-of-cubic}
Every quintuple $\vec{e} \in \mathcal{P}_3$ is realized as the scrollar invariants of a cover $\pi \colon C \to \PP^1$ where $\pi = \tau \circ \gamma$ factors as a double cover of a triple cover (that is $\deg(\tau) = 3$ and $\deg(\gamma) = 2$).
\end{theorem}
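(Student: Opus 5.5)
We will construct $C$ as a double cover of a trigonal curve whose Maroni invariants are $(e_1,e_2)$. Since $\vec{e} \in \mathcal{P}_3$ we have $e_1 \leq e_2 \leq 2e_1$. By the known answer in degree $3$ (Maroni), there is a smooth trigonal curve $\tau \colon X \to \PP^1$ with $\tau_*\cO_X \simeq \cO \oplus \cO(-e_1) \oplus \cO(-e_2)$. We realize $X$ as a curve in the Hirzebruch surface $\PP(\cO(e_1)\oplus\cO(e_2))$. A double cover $\gamma \colon C \to X$ is then specified by a line bundle $M$ on $X$ together with a section $s \in H^0(X, M^{\otimes 2})$; this gives $\gamma_*\cO_C = \cO_X \oplus M^{-1}$, and hence
\[
\pi_*\cO_C = \tau_*\cO_X \oplus \tau_* M^{-1} = \cO \oplus \cO(-e_1)\oplus\cO(-e_2) \oplus \tau_*M^{-1}.
\]
So it suffices to find $X$ as above and a line bundle $M$ on $X$ satisfying two conditions. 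First, $\tau_* M^{-1} \simeq \cO(-e_3)\oplus\cO(-e_4)\oplus\cO(-e_5)$. Second, $M^{\otimes 2}$ has a section with reduced (squarefree) zero divisor. The second condition makes $C$ smooth. Irreducibility of $C$ is automatic once $\deg M>0$ and the branch divisor is nonempty.

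As a sanity check, the remaining conditions of $\mathcal{P}_3$ match the module structure. These are $e_4 \leq e_1+e_3$, $e_5 \leq e_1+e_4$ and $e_5 \leq e_2+e_3$. They are exactly the constraints forced by $\tau_*M^{-1}$ being a $\tau_*\cO_X$-module of rank one. Multiplication by $\cO(-e_1)$ and $\cO(-e_2)$ must connect the summands $\cO(-e_3),\cO(-e_4),\cO(-e_5)$ nontrivially, just as in \Cref{ex:multiplication-constraints}. This strongly suggests every such triple is realized, and gives the shape of the construction. To realize it, we take $M^{-1}$ of the form $\tau^*\cO(-m) \otimes \cO_X(-D)$. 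Here $D$ is built from the intersection of $X$ with the directrix $\Delta$ of the Hirzebruch surface and from a few general points of $X$. We then compute $\tau_*$ using the exact sequences $0 \to \cO_X(-D-p) \to \cO_X(-D) \to \cO_p \to 0$. Each point $p$ lowers one summand by one. For general points, a standard elementary-modification argument shows that the summand lowered is the largest available, i.e.\ the splitting stays as balanced as the module constraints allow. Alternatively, we can cite the description of pushforwards of line bundles on trigonal curves in \cite{Larson:BNT}, which says that exactly the triples compatible with the $\tau_*\cO_X$-module structure occur. The plan is to normalize by twisting so that $e_3$ is fixed by $m$. Then $D$ is chosen in two steps: restricted divisors from the surface (multiples of $\Delta|_X$) produce the gaps $e_4-e_3$ and $e_5-e_4$ up to the bounds $e_1$ and $e_2$, and general points adjust the remaining differences.

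Finally, we verify the squarefree section. Since $\deg M = e_3+e_4+e_5 - (\text{const depending on } e_1,e_2)$ is large and positive, we want $M^{\otimes 2}$ to be basepoint free, or at least to have a reduced member; Bertini then gives a smooth $C$. I expect the main obstacle to be exactly this: controlling $\tau_*M^{-1}$ and the positivity of $M^{\otimes2}$ simultaneously in the extreme cases. These are the boundary cases $e_4=e_1+e_3$, $e_5=e_2+e_3$ or $e_5 = e_1+e_4$, where $M$ is forced to be rather special, for instance a large multiple of $\Delta|_X$. There $M^{\otimes 2}$ may acquire base points along $\Delta\cap X$. My first attempt at these cases is an explicit check that $M^{\otimes 2}$ is generated by the pullback of sections of $\cO_{\PP^1}(k)$ times powers of the equation of $\Delta$ off a finite set, with a direct local check of reducedness at $\Delta \cap X$. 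If that fails, we instead swap the roles, realizing the boundary summand via $(e_1,e_2)$ replaced by another pair $(e_i,e_j)$ with $e_j \leq 2e_i$ when such a pair exists.
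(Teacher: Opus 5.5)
Your reduction matches the paper's. Maroni gives $\tau\colon X\to\PP^1$ with invariants $(e_1,e_2)$. A section of $M^{\otimes 2}$ with reduced zero divisor gives a smooth double cover with $\gamma_*\cO_C=\cO_X\oplus M^{-1}$, irreducible because $M\not\simeq\cO_X$. Existence of \emph{some} $M$ with $\tau_*M^{-1}\simeq\cO(-e_3)\oplus\cO(-e_4)\oplus\cO(-e_5)$ follows from Larson's nonemptiness criterion: for $\vec a=(-e_5,-e_4,-e_3)$ its inequalities are exactly $e_5\le e_1+e_4$, $e_4\le e_1+e_3$, $e_5\le e_2+e_3$. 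So your explicit construction via $\Delta|_X$ and general points is unnecessary.

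\textbf{The gap.} The step you yourself flag as the main obstacle is left open: producing an $M$ that has the prescribed splitting type \emph{and} for which $M^{\otimes 2}$ has a reduced section. This is the entire content of the theorem beyond Maroni and Larson, and your proposal contains no argument for it.
\begin{itemize}
\item The claim that $\deg M$ is ``large'' does not help. We have $\deg M^{\otimes 2}=2(e_3+e_4+e_5-e_1-e_2)$ and $g_X=e_1+e_2-2$. So the bound $\deg M^{\otimes 2}\geq 2g_X$, which would force basepoint-freeness, holds only when $e_3+e_4+e_5\geq 2(e_1+e_2-1)$.
\item For instance, $\vec e=(6,6,6,6,8)\in\mathcal{P}_3$ gives $g_X=10$ and $\deg M^{\otimes 2}=16<20$. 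Here $M^{-1}$ lies in the splitting locus of type $(-8,-6,-6)$, which has dimension $g_X-\mu=8$, i.e.\ codimension $2$ in its Picard component. So you can appeal neither to degree nor to $M$ being general in $\Pic$.
\item The fallbacks you list are not arguments. A ``direct local check'' at $\Delta\cap X$ is not carried out. Swapping $(e_1,e_2)$ for another pair would require the remaining three invariants to satisfy Larson's inequalities for that pair, and it would face the same basepoint problem.
\item Choosing $M$ to be very special (multiples of $\Delta|_X$) moves in the wrong direction: it discards exactly the freedom you need.
\end{itemize}

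\textbf{What is missing} is a genericity argument over the whole splitting locus, which is how the paper proceeds.
\begin{itemize}
\item Let $L=M^{-1}$ range over $U^{(-e_5,-e_4,-e_3)}(X)$. The map $L\mapsto L^{\otimes -2}$ has finite fibres (at most $\#\Pic^0(X)[2]$), so its image has the same dimension as this locus.
\item A line bundle whose pushforward to $\PP^1$ has all summands of degree $\geq 0$ is basepoint-free. Removing a point lowers exactly one summand by one, hence lowers $h^0$ by one.
\item It therefore suffices to show $\dim U^{(-e_5,-e_4,-e_3)}(X)>\dim U^{\vec a}(X)$ for every splitting type $\vec a$ of degree $-2\deg L$ with $a_1<0$ that satisfies Larson's nonemptiness inequalities.
\item Larson's theorem gives all these dimensions as piecewise-linear functions of $\vec a$ and $(e_1,e_2)$. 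The comparison over the polytope cut out by $\mathcal{P}_3$ is then an integer linear program, which the paper verifies with exact LP solvers.
\item A general $L$ in the locus then has $L^{\otimes -2}$ basepoint-free, and Bertini gives the reduced section.
\end{itemize}
Without this dimension comparison, or a substitute for it, your proposal does not prove the theorem.
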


The basic strategy is as follows. First, we construct a cubic cover $\tau \colon X \rightarrow \PP^1$ with specified scrollar invariants $(e_1,e_2)$. Then we construct a double cover $\gamma \colon C \rightarrow X$ such that $\gamma_* \cO_C = \cO_X \oplus L$ for some line bundle $L$ on $X$ with $\tau_* L = \cO(-e_3) \oplus \cO(-e_4) \oplus \cO(-e_5)$. 

The key technical difficulty is producing a double cover $\gamma \colon C \rightarrow X$ such that $C$ is smooth and its structure sheaf has the desired pushforward. As a result, the Brill--Noether theory of trigonal covers (\cite{Larson:BNT}) plays an essential role in our proof. 

We first recall the solution to the Tschirnhausen realization problem in degree $d = 3$.

\begin{theorem}[{Maroni~\cite{maroni}}]
  \label{thm:maroni}
  Let $1 \leq e_1 \leq e_2$ be integers. The pair $(e_1,e_2)$ arises as the scrollar invariants of a cubic cover if and only if $e_2 \leq 2e_1$. 
\end{theorem}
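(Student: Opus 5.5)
The necessity of $e_2 \leq 2e_1$ is exactly \Cref{ex:multiplication-constraints}. A cubic cover is never factored through a quadratic subcover, since $2 \nmid 3$. So the projection of $m$ restricted to $\cO(-e_1)\otimes\cO(-e_1)$ onto $\cO(-e_2)$ is a nonzero map, which forces $e_2 \leq 2e_1$. The content of the proof is therefore sufficiency: for integers $1 \leq e_1 \leq e_2 \leq 2e_1$ I would construct a smooth irreducible trigonal curve realizing $(e_1,e_2)$.

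\textbf{Setup.} Set $E = \cO(e_1)\oplus\cO(e_2)$ and use the parametrization of triple covers (Miranda, Casnati--Ekedahl \cite{CE,casnati}). Gorenstein triple covers with Tschirnhausen bundle $E$ correspond to sections $f$ of $\Sym^3 E \otimes \det E^{\vee}$ up to $\Aut(E)$. Geometrically, $C$ is the zero locus of $f$ in the $\PP^1$-bundle $\PP(E) \to \PP^1$, which is the Hirzebruch surface $\mathbb{F}_{e_2-e_1}$.

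\textbf{Key steps.}
\begin{enumerate}[label=(\arabic*)]
\item Expand the bundle explicitly:
\[
\Sym^3 E\otimes\det E^\vee \cong \cO(2e_1-e_2)\oplus\cO(e_1)\oplus\cO(e_2)\oplus\cO(2e_2-e_1).
\]
A section is then a binary cubic form $a x^3 + b x^2y + c xy^2 + d y^3$ whose coefficients are binary forms on $\PP^1$ of these degrees. The hypothesis $e_2 \leq 2e_1$ is exactly the condition that every coefficient degree is nonnegative.
\item Show the relevant linear system on $\PP(E)$ is base-point free. Over each point of $\PP^1$ one can choose $a$ and $d$ nonvanishing there, since both have degree $\geq 0$. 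The restriction to each fibre $\PP^1$ is then the complete system of cubics, which has no base points.
\item Apply Bertini: a general $f$ cuts out a smooth curve $C$ that is finite of degree $3$ over $\PP^1$.
\item Show $C$ is connected, hence irreducible once smooth. The class of $C$ is nef and big; alternatively, compute $h^0(\cO_C)=1$ from the resolution below, using $e_1 \geq 1$.
\item Compute the pushforward from the Koszul resolution
\[
0 \to \cO_{\PP(E)}(-3)\otimes p^*\det E \to \cO_{\PP(E)} \to \cO_C \to 0 .
\]
Pushing forward to $\PP^1$ gives $\pi_*\cO_C/\cO \cong R^1p_*(\cO(-3))\otimes\det E \cong E^\vee$. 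Hence the scrollar invariants of $C \to \PP^1$ are $(e_1,e_2)$.
\end{enumerate}

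\textbf{Main obstacle.} I expect the delicate step to be smoothness and irreducibility in the extremal case $e_2 = 2e_1$. There the coefficient $a$ is a constant, so one must check directly that base-point freeness still holds and that $C$ does not split off a section of $\PP(E)$; a nonzero constant $a$ rules this out. I also expect to need care that $e_1 \geq 1$ is used to exclude the disconnected case, namely the trivial-bundle case $e_1 = 0$.
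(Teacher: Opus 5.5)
Your proposal is correct. Necessity is exactly the argument of \Cref{ex:multiplication-constraints}. The paper does not reprove this statement; it simply cites Maroni. Your sufficiency construction (binary cubic forms in $\Sym^3 E\otimes\det E^\vee$, base-point-freeness from the $x^3$ and $y^3$ coefficients once $2e_1-e_2\geq 0$, Bertini, and the Koszul pushforward) is the same technique the paper uses to prove \Cref{prop:main-triple-cover-of-double}, with base $\PP^1$ in place of $X$. Your explicit check that $h^0(\cO_C)=1$ via $e_1\geq 1$ also supplies the connectedness step that the paper's analogous argument leaves implicit.
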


\begin{lemma}
  \label{lem:cubic-bpf}
  Let $\tau\colon X \rightarrow \PP^1$ be a cubic cover and let $L \not \simeq \cO_X$ be a line bundle on $X$ such that $L^{\otimes -2}$ is basepoint-free. Then there exists a double cover $\gamma \colon C \rightarrow X$ by a smooth curve such that $\gamma_* \cO_{C} = \cO_X \oplus L$.
\end{lemma}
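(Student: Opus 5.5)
We will use the standard construction of double covers branched along a section. Given $L$ on $X$ and a section $s \in H^0(X, L^{\otimes -2})$, we form the $\cO_X$-algebra
\[
\mathcal{B}_s = \cO_X \oplus L,
\]
with multiplication $L \otimes L \to \cO_X$ given by $s \colon L^{\otimes 2} \to \cO_X$, i.e.\ the map $L^{\otimes 2} \simeq L^{\otimes 2}\otimes\cO_X \xrightarrow{s} L^{\otimes 2}\otimes L^{\otimes -2} = \cO_X$. We then set $C = \relSpec_X \mathcal{B}_s$ and let $\gamma \colon C \to X$ be the structure map. By construction $\gamma$ is finite and flat of degree $2$ with $\gamma_*\cO_C = \cO_X \oplus L$. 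Locally, if $L$ is trivialized by $y$, then $C$ is cut out by $y^2 = s$. So $C$ is smooth exactly at the points over which the local function $s$ has at most a simple zero. It therefore suffices to produce $s$ with reduced (squarefree) divisor of zeros, and then to check that $C$ is irreducible.

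\textbf{Squarefree section (Bertini).} If $L^{\otimes -2}$ is basepoint-free, the complete linear system $|L^{\otimes -2}|$ defines a morphism $X \to \PP^N$, and a general member is the pullback of a general hyperplane. In characteristic zero, Bertini's theorem applied to this morphism from a smooth curve says that a general hyperplane pulls back to a reduced divisor, i.e.\ the general $s$ vanishes only to order one at each of finitely many points. The degenerate case is $\deg L^{\otimes -2} = 0$: then basepoint-freeness forces $L^{\otimes -2} \simeq \cO_X$, and $s$ can be taken to be a nowhere-vanishing constant, giving an étale double cover. In every case $C$ is smooth.

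\textbf{Irreducibility of $C$.} If $s$ has at least one simple zero, then $C$ is ramified there. Near that point $y^2 = s$ with $s$ a uniformizer, so $C$ is locally irreducible and hence not a disjoint union of two copies of $X$. Since $C$ is smooth, its connected components coincide with its irreducible components, so $C$ is irreducible.

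In the étale case, $C$ is reducible exactly when $y^2 = s$ has a solution in $L^{-1}$ locally over $X$ globally, i.e.\ when $C \simeq X \sqcup X$. That splitting forces $\gamma_*\cO_C \simeq \cO_X \oplus \cO_X$, i.e.\ $L \simeq \cO_X$, which the hypothesis $L \not\simeq \cO_X$ excludes. More precisely, $C$ connected is equivalent to $h^0(\cO_C) = 1$. We have $h^0(\cO_C) = h^0(\cO_X) + h^0(L)$, and $h^0(L) = 0$ because $L$ is a nontrivial torsion line bundle. The same computation also covers the ramified case: there $\deg L < 0$, so $h^0(L) = 0$ and $h^0(\cO_C) = 1$, giving connectedness uniformly.

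\textbf{Main obstacle.} The only real content is the Bertini step: a general section of a basepoint-free line bundle on a smooth curve has reduced zero locus. I would cite Bertini in characteristic zero. Alternatively, there is a direct count: for each point $p$, the sections vanishing to order $\geq 2$ at $p$ form a subspace of codimension $2$ when $|L^{\otimes -2}|$ separates tangent directions, and codimension at least $1$ in general by basepoint-freeness. The more careful version is to apply generic smoothness to the incidence correspondence. The degree-zero case must be handled separately as above.
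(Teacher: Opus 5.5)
Your proposal is correct and follows the paper's proof. Both use the algebra $\cO_X \oplus L$ with multiplication given by a general section $s \in H^0(X, L^{\otimes -2})$, invoke Bertini so that $Z(s)$ is reduced and $C$ is smooth, and use $L \not\simeq \cO_X$ to rule out $C \simeq X \sqcup X$; your computation $h^0(\cO_C) = h^0(\cO_X) + h^0(L) = 1$ and your separate handling of the degree-zero (étale) case are just slightly more explicit versions of the paper's irreducibility step.
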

\begin{proof}
  Choose a generic section $s \in H^0(X,L^{\otimes -2})$. From Bertini's theorem, the basepoint-freeness of $L^{\otimes -2}$ implies that the divisor $Z(s)$, cut out by the vanishing of $s$, is reduced. Define the quadratic $\cO_X$-algebra $\mathcal{A} = \cO_X \oplus L$ with multiplication map $\mathcal{A} \otimes \mathcal{A} \rightarrow \mathcal{A}$ given by
  \[
    (x_1,y_1) \otimes (x_2,y_2) \mapsto (x_1x_2 + sy_1y_2,x_1y_2 + x_2y_1).
  \]

  Writing $C = \relSpec_{X}(\mathcal{A})$ we obtain a finite flat morphism $\gamma \colon C  \rightarrow X$ of degree $2$, and by definition $\gamma_* \cO_C = \mathcal{A}$. The divisor $Z(s)$ is easily seen to be the ramification divisor of the morphism $\gamma$. In particular, since $Z(s)$ is reduced $C$ is smooth.

  Finally, if $C$ were reducible and smooth, then $C \simeq X \sqcup X$ and $\gamma_* \cO_C = \cO_X \oplus \cO_X$. But $L \not \simeq \cO_X$ by assumption, so $C$ is irreducible and hence $\gamma$ is a double cover.
\end{proof}

The key technical proposition is now \Cref{prop:main-cubic-prop} whose proof we delay to \Cref{subsec:proof-of-main-cubic}.

\begin{proposition}
\label{prop:main-cubic-prop}
Let $\vec{e} = (e_1, \dots, e_5) \in \mathcal{P}_3$ and let $\tau \colon X \rightarrow \PP^1$ be a trigonal curve with scrollar invariants $\vec{e}_\tau = (e_1,e_2)$. There exists a line bundle $L$ on $X$ such that:
\begin{enumerate}[label=(\arabic*)]
    \item \label{iii:main-cubic1} $\tau_* L = \cO(-e_3) \oplus \cO(-e_4) \oplus \cO(-e_5)$; and
    \item \label{iii:main-cubic2} $L^{\otimes -2}$ is basepoint-free.
\end{enumerate}
\end{proposition}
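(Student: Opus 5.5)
The plan is to construct $L$ explicitly on the trigonal curve $X$, using its embedding in the Hirzebruch surface $\mathbb{F} = \PP(\mathcal{E}_\tau)$. Recall that $X$ sits in $\mathbb{F}$ as a divisor of class $3H - (e_1+e_2)F$, where $H$ is the relative hyperplane class and $F$ is the fiber class, normalized so that $\pi_*\cO_{\mathbb{F}}(H) = \mathcal{E}_\tau$. We also have $\tau_*\cO_X = \cO \oplus \cO(-e_1)\oplus \cO(-e_2)$. The line bundles whose pushforwards we control are of the form $\cO_X(aH + bF)$, together with their twists by fiber points. We also use the Brill--Noether theory of trigonal curves (\cite{Larson:BNT}), which tells us which splitting types $\tau_*L$ occur and, for general $X$ and general $L$ in the relevant locus, that such $L$ exist.

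The first step is to rewrite the target in a normalized form. Set $M = L^{\vee}$. Since we want $\tau_* L = \cO(-e_3)\oplus\cO(-e_4)\oplus\cO(-e_5)$, relative duality gives $\tau_*(M\otimes \omega_{X/\PP^1}) \simeq \cO(e_3)\oplus\cO(e_4)\oplus\cO(e_5)$. So we need a line bundle $N = M \otimes \omega_{X/\PP^1}$ of degree $e_3+e_4+e_5+\text{const}$ whose splitting type is $(e_3,e_4,e_5)$.

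Larson's classification, or alternatively an explicit construction, would show that a splitting type $(b_1,b_2,b_3)$ with $b_1\le b_2\le b_3$ is realized on a given trigonal $X$ with invariants $(e_1,e_2)$ when the differences are controlled by $e_1,e_2$. The relevant inequalities are $b_2 - b_1 \le e_1$-type bounds and $b_3 - b_2 \le e_2 - e_1 + \dots$. I would match these against the defining inequalities of $\mathcal{P}_3$, namely $e_4\le e_1+e_3$ and $e_5\le e_1+e_4$, together with $e_5\le e_2+e_3$. The explicit route is to take $N = \cO_X(H + kF)(D)$, where $D$ is a sum of points in distinct fibers. Pushing forward the sequence $0\to \cO_X(H+kF) \to N \to \cO_D\to 0$ and choosing the points $D$ generically lets us move between splitting types one elementary modification at a time. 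I expect the inequalities of $\mathcal{P}_3$ to be exactly what is needed for the target splitting to be reachable. The case analysis here is by which of the $\mathcal{P}_3$ inequalities are tight.

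The second step is basepoint-freeness of $L^{\otimes -2} = M^{\otimes 2}$. Since $\deg M^{\otimes 2}$ is large, the simplest route is to show that $M$ itself is globally generated, or at least that $M^{\otimes 2}$ is. The criterion is that $h^0(M^{\otimes 2}(-p)) = h^0(M^{\otimes 2}) - 1$ for every $p$. Through pushforward this becomes a statement about $\tau_*(M^{\otimes 2})$: it suffices that every summand of $\tau_*(M^{\otimes 2})$ has degree $\ge 0$ and that the pushforward map sees all three points of each fiber. One clean way to get this is through the positivity of the summands $e_3,e_4,e_5\ge e_1\ge 1$. The degrees of $M\otimes\omega^{-1}_{X/\PP^1}$ relate to the $e_i$, and the multiplication $\tau_*M\otimes\tau_*M\to\tau_*(M^{\otimes 2})$ should show that $\tau_*(M^{\otimes 2})$ has all summands sufficiently positive, namely $\ge 1$ after the $\omega$ twist. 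Then, for any effective divisor $Z$ of degree $2$ on a fiber, $H^1(M^{\otimes 2}(-Z))=0$ holds fiberwise, which gives basepoint-freeness.

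The main obstacle is the first step: realizing every splitting type in the range $\mathcal{P}_3$ on a trigonal curve with the prescribed $(e_1,e_2)$, including the boundary cases where $e_2=2e_1$ or $e_5=e_1+e_4$. There the generic choice in Larson's theory may fail, and we must exhibit explicit bundles of the form $\cO_X(aH+bF)$, possibly twisted by fiber divisors. I would first try to take $M$ of the form $\cO_X(H+kF)$ twisted by a general effective divisor supported in distinct fibers, and verify the splitting by computing $h^0$ of twists.
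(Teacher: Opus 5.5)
\textbf{Gap: the basepoint-freeness step.} Your first step is fine once you invoke Larson's classification (\Cref{thm:splitting-type-classification}), which holds for every trigonal $X$ with invariants $(e_1,e_2)$. For $\vec a=(-e_5,-e_4,-e_3)$ the conditions $a_{i+j}\le a_i+e_j$ are exactly $e_5\le e_1+e_4$, $e_4\le e_1+e_3$ and $e_5\le e_2+e_3$. So there is no boundary problem, and you do not need the $\cO_X(H+kF)(D)$ construction.

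The genuine gap is your second step, which carries the content of the proposition. Basepoint-freeness of $L^{\otimes -2}$ does not follow from the splitting type of $L$, and your mechanism cannot produce it:
\begin{itemize}
\item $\deg\tau_*(L^{\vee})=e_3+e_4+e_5-2e_1-2e_2$ can be negative, so the multiplication $\tau_*M\otimes\tau_*M\to\tau_*(M^{\otimes 2})$ only gives a negative lower bound on the summands of $\tau_*(M^{\otimes 2})$.
\item Twisting the positive bundle coming from $N^{\otimes 2}$ by $\omega_{X/\PP^1}^{\otimes -2}$ is not controlled by splitting types.
\end{itemize}

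\textbf{A concrete failure.} Take $\vec e=(e,e,e,e,e)$ with $e\ge 3$. Then $g_X=2e-2$ and $\deg L^{\otimes-2}=g_X+2<2g_X$, so the degree is not large. Also $\deg\tau_*(L^{\otimes -2})=0$, so your criterion requires $\tau_*(L^{\otimes-2})\cong\cO^{3}$. Here $U^{(-e,-e,-e)}(X)$ is the complement of a theta-translate $Z\subset\Pic^{-e}(X)$, cut out by $h^0(L\otimes\tau^*\cO(e-1))>0$. Consider the theta-translate $\{M: h^0(M\otimes\tau^*\cO(-1))>0\}\subset\Pic^{2e}(X)$. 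Its preimage under $L\mapsto L^{\otimes-2}$ is a $\Pic^0(X)[2]$-invariant divisor. It is therefore not contained in the irreducible divisor $Z$, since a theta divisor has trivial translation stabilizer. So some $L$ with exactly the prescribed splitting type has $\tau_*(L^{\otimes -2})$ with a negative summand. Any valid argument must therefore choose $L$ generically.

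Separately, basepoint-freeness means $h^0(M^{\otimes 2}(-p))=h^0(M^{\otimes 2})-1$ for every single point $p$. It is not an $H^1$-vanishing for degree-$2$ divisors in fibres.

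\textbf{The missing idea} is the paper's dimension count. The map $L\mapsto L^{\otimes-2}$ on $U^{(-e_5,-e_4,-e_3)}(X)$ has finite fibres, of size at most $\#\Pic^0(X)[2]$. So it suffices that $\dim U^{(-e_5,-e_4,-e_3)}(X)$ exceeds the dimension of the locus in $\Pic^{-2\delta}(X)$ (with $\delta=\deg L$) of bundles whose pushforward has a negative summand. That locus is a finite union of splitting loci $U^{\vec a}(X)$ with $a_1\le -1$, whose dimensions are given by Larson's formulas. The comparison thus becomes a piecewise-linear inequality over a polytope in $(\vec e,\vec a)$, which the paper verifies by exact linear programming. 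A general $L$ in the splitting locus then has all summands of $\tau_*(L^{\otimes-2})$ nonnegative, and \Cref{lem:bpf-criterion} gives basepoint-freeness.
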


Assuming \Cref{prop:main-cubic-prop}, we now give a proof of \Cref{thm:double-cover-of-cubic}.

\begin{proof}[Proof of Theorem~\ref{thm:double-cover-of-cubic}]
Let $\vec{e} = (e_1,\dots,e_5) \in \mathcal{P}_3$. By \Cref{thm:maroni}, there exists a trigonal curve $\tau \colon X \rightarrow \PP^1$ with scrollar invariants $\vec{e}_\tau = (e_1,e_2)$. By \Cref{prop:main-cubic-prop}, there exists a line bundle $L$ on $X$ satisfying \ref{iii:main-cubic1} and \ref{iii:main-cubic2}.

Now by \Cref{lem:cubic-bpf} there exists a double cover $\gamma \colon C \rightarrow X$ with $\gamma_* \cO_C = \cO_X \oplus L$. Thus $\pi = \tau \circ \gamma$ is a smooth irreducible sextic cover with scrollar invariants $\vec{e}_\pi = \vec{e}$.
\end{proof}

\subsection{Proof of Proposition~\ref{prop:main-cubic-prop}}
\label{subsec:proof-of-main-cubic}

We begin by recalling some basic properties of \emph{splitting types} of line bundles on finite covers of the projective line. 

\begin{defn}
Given a degree $d$ cover $\tau \colon X \rightarrow \PP^1$ and a line bundle $L$ on $X$, the \emph{splitting type} of $L$ is the unique $d$-tuple $\vec{a}(L) = (a_1,\dots,a_{d})$ such that $a_1 \leq \dots \leq a_d$ and $\tau_* L \simeq \cO(a_1) \oplus \dots \oplus \cO(a_d)$.
\end{defn}

\begin{defn}
Given a degree $d$ cover $\tau \colon X \rightarrow \PP^1$, and a $d$-tuple of integers $\vec{a} = (a_1,\dots,a_d)$ with $a_1 \leq \dots \leq a_d$, define
\[
    U^{\vec{a}}(X) = \{L \in \Pic(X) : \vec{a}(L) = \vec{a}\}.
\]
\end{defn}

\begin{lemma}
  \label{lem:bpf-criterion}
  Let $\tau \colon X \rightarrow \PP^1$ be a degree $d$ cover and let $L$ be a line bundle on $X$. If $L$ has splitting type $\vec{a}(L) = (a_1, \dots, a_d)$ with $a_1 \geq 0$, then $L$ is basepoint-free. 
\end{lemma}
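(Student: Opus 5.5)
The plan is to reduce basepoint-freeness of $L$ on $X$ to global generation of $\tau_* L$ on $\PP^1$, using the natural evaluation map. Since $a_1 \geq 0$, each summand $\cO(a_i)$ of $\tau_* L \simeq \cO(a_1)\oplus\dots\oplus\cO(a_d)$ is globally generated on $\PP^1$. Hence the evaluation map
\[
H^0(\PP^1, \tau_* L) \otimes \cO \longrightarrow \tau_* L
\]
is surjective. Also, $H^0(\PP^1,\tau_*L) = H^0(X,L)$ by definition of pushforward.

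Next we pull this surjection back along $\tau$. Pullback is right exact, so we get a surjection $H^0(X,L)\otimes \cO_X \to \tau^*\tau_* L$. We compose it with the adjunction counit $\tau^*\tau_* L \to L$. The key point is that this counit is surjective, because $\tau$ is finite (affine). Locally over an affine open $U=\Spec R\subset \PP^1$ with preimage $\Spec S$, the map $\tau^*\tau_*L\to L$ is the multiplication map $S\otimes_R M \to M$, where $M$ is the $S$-module of sections of $L$. This map is surjective since $1\otimes m\mapsto m$.

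It remains to identify the composite $H^0(X,L)\otimes\cO_X\to L$ with the evaluation map of $L$ on $X$. Tracing a global section $s$ through shows it goes to $s$ itself. So the evaluation map of $L$ is surjective, i.e.\ $L$ is basepoint-free.

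There is no real obstacle here. The only step needing care is checking that the counit $\tau^*\tau_*L\to L$ is surjective and is compatible with the identification of global sections, and both follow from the affine local description above.
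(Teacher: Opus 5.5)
Your argument is correct, but it takes a different route from the paper's. The paper works point by point. For a closed point $P \in X$ it pushes forward the inclusion $L(-P) \hookrightarrow L$. It then asserts that the splitting type of $L(-P)$ is obtained from $(a_1,\dots,a_d)$ by lowering a single entry by one. Since every $a_i \geq 0$, it concludes that $h^0(X, L(-P)) = h^0(X,L) - 1$, so $P$ is not a base point.

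You instead take global generation of $\tau_* L$, which is immediate from $a_1 \geq 0$, and carry it up to $X$. You do this using right exactness of $\tau^*$ and surjectivity of the counit $\tau^*\tau_*L \to L$ for the affine morphism $\tau$. You also check that the composite is the evaluation map, which is the one point needing care, and you handle it correctly.

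What your version buys:
\begin{itemize}
\item It avoids the paper's unproved claim about how a splitting type changes under an elementary modification. That claim is true, but it needs a short argument using automorphisms of $\bigoplus \cO(a_i)$ to move the functional at $\tau(P)$ onto a single summand.
\item It proves more. For any affine morphism, a quasi-coherent sheaf whose pushforward is globally generated is itself globally generated. It uses neither the rank-one hypothesis nor anything about bundles on $\PP^1$ beyond global generation of $\cO(a)$ for $a \geq 0$.
\end{itemize}

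The paper's version is instead a concrete dimension count, phrased in the splitting-type language used throughout that section.
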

\begin{proof}
  For any closed point $P \in X$, the inclusion $L(-P) \xhookrightarrow{ } L$ pushes forward to an inclusion $\tau_*L(-P) \xhookrightarrow{ } \tau_*L$. Since $\deg(L(-P)) = \deg(L) - 1$ the splitting type of $L(-P)$ is given as $\vec{a}(L(-P)) = (a_1, \dots, a_{i-1}, a_i - 1, a_{i + 1}, \dots, a_d)$ for some $1 \leq i \leq d$. We then have
  \begin{align*}
    h^0(X,L(-P)) &= h^0(\PP^1, \tau_* L(-P))                                                          \\
                 &= h^0(\PP^1,\cO(a_i - 1)) + \sum_{j \neq i} h^0(\PP^1, \cO(a_j))                    \\
                 &= -1 + \sum_{j = 1}^d h^0(\PP^1, \cO(a_j)) & & \text{because $0 \leq a_1 \leq a_i$} \\
                 &= -1 + h^0(\PP^1,\tau_* L)                                                          \\
                 &= h^0(\PP^1, L).
  \end{align*}
  Therefore $L$ is basepoint-free.
\end{proof}

For the rest of this section, we assume that $\vec{e} \in \mathcal{P}_3$ so that (by \Cref{thm:maroni}) we may fix a cubic cover $\tau \colon X \rightarrow \PP^1$ with scrollar invariants $\vec{e}_\tau = (e_1,e_2)$. The genus of $X$ is $g_X = e_1 + e_2 - 2$. A degree calculation shows that a line bundle $L$ in $U^{\rest}(X)$ has degree $\delta \colonequals g_X + 2 -e_3 - e_4 - e_5$.
Define the map
\[
  \phi \colon U^{\rest}(X) \rightarrow \Pic^{-2\delta}(X)
\]
given by $L \mapsto L^{\otimes -2}$. Define
\[
  U^{\bad}(X) \colonequals \{L \in \Pic^{-2\delta}(X) : \vec{a}(L) = (a_1, \dots, a_d) \text{ with } a_1 < 0\}.
\]
That is, $U^{\bad}(X)$ consists of those line bundles $L \in \Pic^{-2\delta}(X)$ whose pushforwards $\tau_* L$ contain a negative degree summand.

\begin{lemma}
\label{lem:dim-reduction}
If $\dim U^{\rest}(X) > \dim U^{\bad}(X)$, then there exists a line bundle $L$ with splitting type $\vec{a}(L) = (-e_5, -e_4, -e_3)$ such that $L^{\otimes -2}$ is basepoint-free.
\end{lemma}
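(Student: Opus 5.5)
The idea is to compare dimensions through the map $\phi$. By \Cref{lem:bpf-criterion}, if $L^{\otimes -2} \in \Pic^{-2\delta}(X)$ does not lie in $U^{\bad}(X)$, then every summand of $\tau_*(L^{\otimes -2})$ has nonnegative degree, so $L^{\otimes -2}$ is basepoint-free. It therefore suffices to find some $L \in U^{\rest}(X)$ with $\phi(L) \notin U^{\bad}(X)$, that is, to show that $U^{\rest}(X) \not\subseteq \phi^{-1}(U^{\bad}(X))$.

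The key input is that $\phi$ has finite fibers. Indeed $\phi$ is the restriction to $U^{\rest}(X)$ of the squaring-and-inverting map $\Pic^{\delta}(X) \to \Pic^{-2\delta}(X)$, $L \mapsto L^{\otimes -2}$. This map is a torsor under the finite group $\Pic^0(X)[2]$, of order $2^{2g_X}$, so each fiber of $\phi$ has at most $2^{2g_X}$ points. Hence
\[
\dim \phi^{-1}(U^{\bad}(X)) \leq \dim U^{\bad}(X) < \dim U^{\rest}(X).
\]
For this to be meaningful, $U^{\bad}(X)$ and $U^{\rest}(X)$ should be constructible (in fact locally closed) subsets of the Picard varieties. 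This follows from upper semicontinuity of $h^0(\tau_*L(n)) = h^0(L \otimes \tau^*\cO(n))$ in families, since the splitting type is determined by these numbers as $n$ varies. In particular, $U^{\bad}(X)$ is the closed locus where $h^0(L\otimes\tau^*\cO(-1)) > h^0(L) - d$, or equivalently a finite union of splitting loci. The preimage $\phi^{-1}(U^{\bad}(X)) \cap U^{\rest}(X)$ is then a constructible subset of strictly smaller dimension than $U^{\rest}(X)$, so it cannot be all of $U^{\rest}(X)$. Any $L$ in the complement has splitting type $\rest$ and satisfies $\phi(L) \notin U^{\bad}(X)$, which gives the claim.

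The main subtlety, which is mild, is justifying the dimension inequality for constructible sets under a finite-fibered morphism. I would argue by stratifying $U^{\rest}(X)$ into finitely many locally closed irreducible pieces; on each piece, a morphism of varieties with finite fibers satisfies $\dim(\text{source}) \leq \dim(\text{target})$. I would also note that $U^{\rest}(X)$ is nonempty, which is implicit because its dimension exceeds that of $U^{\bad}(X)$ and dimensions are at least $0$ for nonempty sets (or $-\infty$ by convention for empty ones).
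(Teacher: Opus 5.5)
Your proposal is correct and follows the paper's proof: the fibers of $\phi$ have at most $\#\Pic^0(X)[2] = 2^{2g_X}$ points, so $\dim U^{\rest}(X) > \dim U^{\bad}(X)$ forces some $L \in U^{\rest}(X)$ with $\phi(L) \notin U^{\bad}(X)$, and \Cref{lem:bpf-criterion} then shows $L^{\otimes -2}$ is basepoint-free. Your constructibility argument is more careful than the paper's, with one quibble: closedness of $U^{\bad}(X)$ does not follow directly from a difference of two $h^0$'s, but it does follow from describing $U^{\bad}(X)$ as the upper semicontinuous locus $h^1(X, L\otimes\tau^*\cO(-1)) > 0$ (and in any case only constructibility is needed).
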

\begin{proof}
  Let $L,L'$ be line bundles such that $L^{\otimes -2} \simeq (L')^{\otimes -2}$. Then $L^\vee \otimes L' \in \Pic^0(X)$ is $2$-torsion, and therefore the fibers of $\phi$ have cardinality at most $\# \Pic^0(X)[2] = 2^{2g_X}$. In particular, if $\dim U^{\rest}(X) > \dim U^{\bad}(X)$, then there exists $L \in U^{\rest}(X)$ such that $\phi(L) \notin U^{\bad}(X)$. Now Lemma~\ref{lem:bpf-criterion} implies that $\phi(L) = L^{\otimes -2}$ is basepoint-free.
\end{proof}

To verify that $\dim U^{\rest}(X) > \dim U^{\bad}(X)$, we will write $U^{\bad}(X)$ as a finite disjoint union of splitting loci $U^{\vec{a}}(X)$ and then check that $\dim U^{\rest}(X) > \dim U^{\vec{a}}(X)$ for every such $\vec{a}$. In order to do this, we will need to know when splitting loci $U^{\vec{a}}(X)$ are nonempty, and the dimensions of the splitting loci in this case. 

For a sequence $\vec{a} = (a_1,a_2,a_3)$ of nondecreasing integers, define
\[
  \mu(\vec{a}) \colonequals \sum_{i < j} \max\{0,a_j - a_i - 1\} .
\]

\begin{theorem}[{\cite[Theorem~1.1]{Larson:BNT}}]
\label{thm:splitting-type-classification}
Let $\vec{a} = (a_1,a_2,a_3)$ be nondecreasing integers. Then $U^{\vec{a}}(X)$ is nonempty if and only if $a_{i+j} \leq a_i + e_j$ for all $1 \leq i \leq 3$ and $1 \leq j \leq 2$ with $i+j \leq 3$. If $U^{\vec{a}}(X)$ is nonempty, then its dimension is given as follows:
\begin{enumerate}
    \item if $a_3 \leq a_1 + e_1$, then $\dim(U^{\vec{a}}(X)) = g_X - \mu(\vec{a})$;
    \item if $a_1 + e_1 < a_3 < e_2+a_1$, then $\dim(U^{\vec{a}}(X)) = e_2 - (a_3 - a_1)$; and
    \item if $a_3 = e_2+a_1$, then $\dim(U^{\vec{a}}(X)) = 0$.
\end{enumerate}    
\end{theorem}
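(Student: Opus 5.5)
This is \cite[Theorem~1.1]{Larson:BNT}, and in the paper it is simply quoted. If I were to prove it directly, I would treat $\tau_* L$ as a rank-one module over the cubic algebra $\tau_*\cO_X \simeq \cO \oplus \cO(-e_1)\oplus\cO(-e_2)$ and split the argument into necessity, sufficiency and the dimension count.

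\emph{Necessity.} I would imitate \Cref{ex:multiplication-constraints} and the generic-fibre argument of \Cref{lem:factoring}. Fix a splitting $\tau_*L=\cO(a_1)\oplus\cO(a_2)\oplus\cO(a_3)$ and suppose $a_{i+j} > a_i + e_j$. Put
\[
M=\cO(a_{i+j})\oplus\cdots\oplus\cO(a_3)
\quad\text{and}\quad
A_j=\cO\oplus\cO(-e_1)\oplus\cdots\oplus\cO(-e_j).
\]
Here $A_j$ has rank $j+1$ and $M$ has rank $4-i-j$.

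\begin{itemize}
\item \emph{Degree bound on the image.} For $j'\le j$ and $k\ge i+j$ we have $a_k-e_{j'}\ge a_{i+j}-e_j>a_i$. So the module multiplication sends $A_j\otimes M$ into $\cO(a_{i+1})\oplus\cdots\oplus\cO(a_3)$, which has rank $3-i$.
\item \emph{Rank bound on the image.} At the generic point, $(\tau_*L)_\eta$ is a one-dimensional vector space over the cubic field $K=k(X)$. The field $K$ has prime degree $3$ over $k$, so it has no intermediate fields. A Cauchy--Davenport type bound for subspace products (Hou--Leung--Xiang, or directly for degree $3$) then gives $\dim_k(A_{j,\eta}M_\eta)\ge\min(3,4-i)$.
\item \emph{Contradiction.} For $i\ge1$ we have $\min(3,4-i)=4-i>3-i$, contradicting the previous bullet.
\end{itemize}

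\emph{Sufficiency and dimensions.} I would embed $X$ in the Hirzebruch surface $\PP(\cO(e_1)\oplus\cO(e_2))$, or equivalently use the Casnati--Ekedahl description of $X$ by a binary cubic form. A line bundle $L$ with $\tau_*L\simeq\cO(\vec a)$ is the same thing as an $\cO_X$-module structure on $\cO(\vec a)$. Concretely, this is a pair of maps
\[
\cO(-e_1)\otimes\cO(\vec a)\to\cO(\vec a),\qquad \cO(-e_2)\otimes\cO(\vec a)\to\cO(\vec a)
\]
that commute and satisfy the multiplication relations of $\tau_*\cO_X$, with torsion-free quotient. Then $U^{\vec a}(X)$ is the set of such structures modulo $\Aut\cO(\vec a)$. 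I would first exhibit one explicit structure when the inequalities hold, using matrices built from the coefficients of the binary cubic and choosing entries of the prescribed degrees. Next I would compute the dimension by deformation theory. The tangent space to the splitting locus is governed by $H^1(X,\cO_X)\to H^1(\PP^1,\End\cO(\vec a))$, which gives the expected codimension $\mu(\vec a)=h^1(\End\cO(\vec a))$ in $\Pic(X)$.

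\emph{Main obstacle.} The hard step is the dimension count in cases (2) and (3), where $a_3-a_1>e_1$. There the module-theoretic constraints from necessity are no longer generic, the locus is non-transverse, and its dimension exceeds $g_X-\mu(\vec a)$. My first attempt would analyse these cases through the sub-line-bundle structure. When $a_3>a_1+e_1$, the maximal-degree summand forces $L$ to be of the form $\tau^*\cO(m)(D)$ with $D$ effective on a small linear system $\cO(e_2-(a_3-a_1))$-type family. Counting such $D$ should give dimension $e_2-(a_3-a_1)$, which is $0$ when $a_3=a_1+e_2$. Should this fail, I would instead follow Larson's degeneration to a chain of elliptic curves and count limit splitting types.
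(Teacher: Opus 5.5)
The paper gives no proof of this statement; it is quoted from Larson, so your sketch has to stand on its own. The necessity half does. A component $\cO(a_k-e_{j'})\to\cO(a_m)$ with $m\le i$ vanishes because $a_m\le a_i<a_{i+j}-e_j\le a_k-e_{j'}$, so $A_j\otimes M$ lands in a bundle of rank $3-i$. At the generic point, the linear Cauchy--Davenport bound forces rank at least $\min(3,4-i)=4-i$; this holds because the stabilizer of $A_{j,\eta}M_\eta$ is an intermediate field of the degree-$3$ extension $K/k$, hence $k$ or $K$. The argument works for all three pairs $(i,j)=(1,1),(1,2),(2,1)$, extends the generic-fibre argument of \Cref{lem:factoring} cleanly, and is all that \Cref{lem:decomposition} needs.

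Everything else is a plan rather than a proof, and it is exactly what the paper relies on in \Cref{lem:dim-calc}.

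\emph{Existence.} You never produce a line bundle. ``Matrices built from the coefficients of the binary cubic'' is not a construction, and nothing shows that the three inequalities suffice for $U^{\vec a}(X)\neq\emptyset$ on the given $X$.

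\emph{Case (1).} Deformation theory only gives the lower bound $\dim U^{\vec a}(X)\ge g_X-\mu(\vec a)$ on each nonempty component, since $U^{\vec a}(X)$ is the preimage of a codimension-$\mu(\vec a)$ stratum of bundles. The Zariski tangent space at $L$ is the kernel of $H^1(\PP^1,\tau_*\cO_X)\to H^1(\PP^1,\End\cO(\vec a))$. The matching upper bound in case (1) therefore needs this map to be surjective at a point of every component (equivalently, its Serre-dual Petri-type map must be injective). You neither prove this nor say where $a_3\le a_1+e_1$ enters. It must enter: in cases (2) and (3) one has $\mu(\vec a)=2(a_3-a_1)-3$, so the stated dimension exceeds $g_X-\mu(\vec a)$ by $a_3-a_1-e_1-1$. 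Hence the map is nowhere surjective once $a_3-a_1\ge e_1+2$.

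\emph{Cases (2) and (3).} You offer only a heuristic. Writing $L=\tau^*\cO(a_3)(D)$ with $D$ effective of degree $g_X+2-(a_3-a_1)-(a_3-a_2)$ is fine. But you must characterize which such $D$ occur and bound that family from both sides. A degeneration argument would at best control a general curve of a family, not the fixed $X$ of the statement.

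So as written you have proved only the ``only if'' direction of the nonemptiness criterion. Incidentally, as quoted, cases (1) and (3) overlap and disagree when $e_1=e_2$. For a trigonal genus-$2$ curve and $\vec a=(-1,0,1)$, one has $U^{\vec a}(X)=\{\tau^*\cO(1)(p):p\in X\}$, of dimension $1=g_X-\mu(\vec a)$ rather than $0$. So (3) must be read with $a_3>a_1+e_1$, as your phrasing implicitly does.
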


\begin{coro}
\label{lem:decomposition}
We have $U^{\bad}(X) = \bigsqcup U^{\vec{a}}(X)$, where the disjoint union is over nondecreasing integer sequences $\vec{a} = (a_1,a_2,a_3)$ satisfying:
\begin{enumerate}
    \item $a_{i+j} \leq a_i + e_j$ for all $1 \leq i \leq 3$ and $1 \leq j \leq 2$ with $i+j \leq 3$;
    \item $a_1 + a_2 + a_3 = -2 \delta - g_X - 2$; and
    \item $a_1 \leq -1$.
\end{enumerate}
\end{coro}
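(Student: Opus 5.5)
The plan is to partition $\Pic^{-2\delta}(X)$ by splitting type, and then see which splitting types can occur and which of them land in $U^{\bad}(X)$.

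First, every line bundle $L$ on $X$ has a unique splitting type $\vec{a}(L)$, by Birkhoff--Grothendieck together with the convention $a_1 \leq a_2 \leq a_3$. So the loci $U^{\vec{a}}(X)$, for distinct nondecreasing triples $\vec{a}$, are pairwise disjoint. Restricting to line bundles of degree $-2\delta$, we get that $\Pic^{-2\delta}(X)$ is the disjoint union of the loci $U^{\vec{a}}(X) \cap \Pic^{-2\delta}(X)$.

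Next we pin down the degree. Since $\tau$ is finite, $\chi(\PP^1,\tau_*L) = \chi(X,L)$. Riemann--Roch on both curves gives
\[
\deg(\tau_*L) + 3 = \deg L + 1 - g_X .
\]
This is the formula $\deg(\tau_*L) = \deg L + 1 - g_X - \deg\tau$ already used in the proof of \Cref{prop:constraints}. Hence $L \in \Pic^{-2\delta}(X)$ if and only if $a_1 + a_2 + a_3 = -2\delta - g_X - 2$. So the degree restriction is exactly condition (2), and $U^{\vec{a}}(X) \cap \Pic^{-2\delta}(X)$ is either all of $U^{\vec{a}}(X)$ or empty, according to whether (2) holds.

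By definition, $U^{\bad}(X)$ consists of those $L$ of degree $-2\delta$ whose splitting type has $a_1 < 0$, that is, $a_1 \leq -1$. This is condition (3). Therefore $U^{\bad}(X)$ is the disjoint union of $U^{\vec{a}}(X)$ over all nondecreasing $\vec{a}$ satisfying (2) and (3).

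Finally, \Cref{thm:splitting-type-classification} says that $U^{\vec{a}}(X)$ is nonempty exactly when condition (1) holds. So we may drop every $\vec{a}$ violating (1): those loci are empty and contribute nothing. Conversely, keeping the $\vec{a}$ that satisfy (1) is harmless even if one only wants an inclusion. This yields the stated decomposition.

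There is no real obstacle here. The only point needing care is getting the constant in the degree formula exactly right, namely $-g_X - 2$ for a cubic cover, since condition (2) depends on it.
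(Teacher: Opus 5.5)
Your proof is correct and follows the paper's approach. The paper simply says the decomposition follows immediately from Larson's classification theorem. You have written out the steps it leaves implicit: disjointness comes from uniqueness of the splitting type, condition (2) from $\deg \tau_* L = \deg L - g_X - 2$, condition (3) from the definition of $U^{\bad}(X)$, and condition (1) from Larson's nonemptiness criterion.
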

\begin{proof}
Follows immediately from \Cref{thm:splitting-type-classification}.
\end{proof}

\begin{lemma}
\label{lem:dim-calc}
Let $\vec{e} \in \mathcal{P}_3$. If $\vec{a}$ satisfies the conditions of \Cref{lem:decomposition}, then
\[
\dim U^{\rest}(X) > \dim U^{\vec{a}}(X).
\]
\end{lemma}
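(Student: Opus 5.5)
The plan is to compute both dimensions explicitly with \Cref{thm:splitting-type-classification} and compare them case by case. Throughout, use $g_X = e_1+e_2-2$. For a line bundle $M$ on $X$ we have $\deg \tau_*M = \deg M - g_X - 2$. So the sequences $\vec{a}$ of \Cref{lem:decomposition} satisfy
\[
a_1+a_2+a_3 = -2\delta - g_X - 2 = 2(e_3+e_4+e_5) - 3(e_1+e_2).
\]

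\textbf{Computing $\dim U^{\rest}(X)$.} First I check that $\vec{a}_0 = (-e_5,-e_4,-e_3)$ satisfies the nonemptiness criterion of \Cref{thm:splitting-type-classification}, using the defining inequalities of $\mathcal{P}_3$:
\begin{itemize}
\item $-e_4 \leq -e_5 + e_1$ is $e_5 \leq e_1+e_4$;
\item $-e_3 \leq -e_4 + e_1$ is $e_4 \leq e_1+e_3$;
\item $-e_3 \leq -e_5 + e_2$ is $e_5 \leq e_2+e_3$.
\end{itemize}
Then I read off $\dim U^{\rest}(X)$ from the three regimes, according to where $e_5-e_3$ lies relative to $e_1$ and $e_2$:
\begin{itemize}
\item $g_X - \mu(\vec{a}_0)$, where $\mu(\vec a_0) = \max\{0,e_5-e_4-1\}+\max\{0,e_4-e_3-1\}+\max\{0,e_5-e_3-1\}$;
\item $e_2 - (e_5-e_3)$;
\item $0$.
\end{itemize}

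\textbf{Bounding the bad strata.} Next I constrain the bad $\vec{a}$. From $a_2 \leq a_1+e_1$, $a_3 \leq a_1+e_2$ and $a_1 \leq -1$ we get
\[
a_1+a_2+a_3 \leq 3a_1 + e_1+e_2 \leq e_1+e_2-3.
\]
Comparing with the degree formula gives $2(e_3+e_4+e_5) \leq 4(e_1+e_2) - 3$. This inequality already fails for many $\vec e$, and then $U^{\bad}(X)$ is empty and there is nothing to prove; note also that $\dim U^{\rest}(X) \geq 0$ since it is nonempty. When bad $\vec{a}$ do exist, the key point is that their sum is large while $a_1 \leq -1$, so $a_2$ and $a_3$ must be far above $a_1$. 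This forces either a large $\mu(\vec a)$ in regime (1), or a large gap $a_3 - a_1$ in regimes (2) and (3). Quantitatively, $a_3 - a_1 \geq \tfrac{1}{2}\bigl((a_2-a_1)+(a_3-a_1)\bigr) \geq \tfrac12(\Sigma - 3a_1) \geq \tfrac12(\Sigma+3)$, where $\Sigma = 2(e_3+e_4+e_5)-3(e_1+e_2)$. Since $\mu$ is convex in the gaps, for fixed $a_1$ it is minimized when $a_2,a_3$ are as balanced as the constraints allow. I would therefore minimize $\mu(\vec a)$ (respectively maximize $e_2-(a_3-a_1)$) over the allowed region. This gives an upper bound on $\dim U^{\vec a}(X)$ as an explicit piecewise-linear function of $e_1,\dots,e_5$; it is largest at $a_1=-1$, and I would treat that as the extremal case.

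\textbf{The comparison, which I expect to be the main obstacle.} Finally I compare the extremal bad bound with $\dim U^{\rest}(X)$, splitting into the $3\times 3$ combinations of regimes for $\vec a_0$ and for the extremal bad $\vec a$. In each combination the required strict inequality should reduce to a linear inequality implied by the $\mathcal{P}_3$ conditions, using $e_3 \ge e_2$ so that the bad sequences are much more spread than $\vec a_0$. This bookkeeping is where I expect difficulty. In particular, when $\vec a_0$ is in regime (3), so $\dim U^{\rest}(X)=0$, I must show that no bad $\vec a$ exists at all. The plan there is to check that the necessary inequality $2(e_3+e_4+e_5)\le 4(e_1+e_2)-3$ is violated, using $e_5 = e_2+e_3$ and $e_4\ge e_3\ge e_2$. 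The boundary situations where $\mu$ has zero terms (gaps of $0$ or $1$) also need separate attention.
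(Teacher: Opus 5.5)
Your route is viable and genuinely different from the paper's. The paper never compares strata by hand. It treats $\dim U^{\rest}(X)-\dim U^{\vec a}(X)$ as a piecewise-linear function on the polytope cut out by the $\mathcal{P}_3$ inequalities and the conditions of \Cref{lem:decomposition}, and certifies its positivity with exact rational linear-programming solvers, cross-checked by separately verified code. That is mechanical but opaque; a hand argument explains the result. As written, however, your proposal stops exactly at the step that carries the content: the $3\times 3$ comparison is only promised. It closes in a few lines, with no case split over pairs of regimes, once you record two facts.

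\emph{The dimension depends only on the spread.} By \Cref{thm:splitting-type-classification}, $\dim U^{\vec a}(X)$ depends essentially only on $y=a_3-a_1$. If $y\le 1$ then $\mu(\vec a)=0$. If $2\le y\le e_1$ then $\mu(\vec a)=2y-3$ when $a_1<a_2<a_3$, and $2y-2$ otherwise. Hence, with $y_0=e_5-e_3$, we have $\dim U^{\vec a}(X)\le F(a_3-a_1)$ and $\dim U^{\rest}(X)\ge G(y_0)$, where
\[
F(y)=\begin{cases} g_X, & y\le 1,\\ g_X+3-2y, & 2\le y\le e_1,\\ e_2-y, & e_1<y\le e_2,\end{cases}
\qquad
G(y)=\begin{cases} g_X, & y\le 1,\\ g_X+2-2y, & 2\le y\le e_1,\\ e_2-y, & e_1<y\le e_2.\end{cases}
\]
These automatically take the larger, respectively smaller, of the two readings of the theorem in the overlapping case $y=e_1=e_2$. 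The function $F$ is non-increasing, and a direct check in each regime gives $F(y+2)\le G(y)-1$ whenever $y+2\le e_2$.

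\emph{Bad sequences are at least two steps more spread.} Your own bound $a_3-a_1\ge\frac12(\Sigma+3)$ already suffices, because
\[
\Sigma+3-2(e_5-e_3)=4e_3+2e_4-3e_1-3e_2+3\ge 3
\]
by $e_4\ge e_3\ge e_2\ge e_1$. So by integrality every bad $\vec a$ has $a_3-a_1\ge y_0+2$.

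\emph{Conclusion.} Nonempty strata have $a_3-a_1\le e_2$. So if $y_0+2>e_2$ there are no bad strata at all; this subsumes your separate treatment of regime (3) and of the boundary cases. Otherwise
\[
\dim U^{\vec a}(X)\le F(a_3-a_1)\le F(y_0+2)\le G(y_0)-1<\dim U^{\rest}(X).
\]

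Your intuition that bad sequences are much more spread than $(-e_5,-e_4,-e_3)$ is exactly right. What was missing was quantifying it as a spread gap of at least $2$, and noticing that $\mu$ is controlled by the spread alone. This comparison uses only the ordering of the $e_i$. The remaining $\mathcal{P}_3$ inequalities enter only through the nonemptiness of $U^{\rest}(X)$ and through $y_0\le e_2$.
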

\begin{proof}
The values of $\vec{e}$ and $\vec{a}$ are restricted by the linear inequalities coming from the definition of $\mathcal{P}_3$ and the conditions of \Cref{lem:decomposition}. Our aim is to show that the minimum value of the piecewise linear function $\dim U^{\rest}(X) - \dim U^{\vec{a}}(X)$ is positive on the polytope of possible $\vec{e}$ and $\vec{a}$. Hence, our problem has been reduced to a linear (integer) programming problem. To minimize this piecewise linear function on this region, the authors used two methods. First we used both the \textsc{QSopt\_ex} library~\cite{lin-prog2,lin-prog1,qsopt_ex} and the \textsc{GNU GLPK} library~\cite{glpk} (in particular \texttt{glp\_exact})\footnote{In \cite{electronic} we provide the \texttt{*.lp} and \texttt{*.mps} files which allow the user to verify our claims with most linear programming toolkits.} for rigorous linear programming with rational coefficients. In addition we generated \textsc{Python} code via Gemini Pro 3.1 (utilizing \textsc{SciPy} and \textsc{NumPy}) which we then verified. Both methods yield the same results. 
\end{proof}

\begin{proof}[Proof of \Cref{prop:main-cubic-prop}]
Consider the map 
\[
    \phi \colon U^{\rest}(X) \rightarrow \Pic^{-2\delta}(X)
\]
sending $L \rightarrow L^{\otimes -2}$. It suffices to show that the image of $\phi$ contains a basepoint-free line bundle. By \Cref{lem:dim-reduction}, it suffices to show that $\dim U^{\rest}(X) > \dim U^{\bad}(X)$. Now, \Cref{lem:decomposition} writes $U^{\bad}(X)$ as a finite disjoint union of spaces $U^{\vec{a}}(X)$. Finally, \Cref{lem:dim-calc} shows that for every $\vec{a}$ satisfying these linear inequalities we have $\dim U^{\rest}(X) > \dim U^{\vec{a}}(X)$, as required.
\end{proof}

\section{Triple covers of double covers}
\label{sec:double-triple}

The purpose of this section is to prove the following theorem.
\begin{theorem}
\label{thm:double-triple}
Every quintuple $\vec{e} \in \mathcal{Q}$ is realized as the scrollar invariants of a cover $\pi \colon C \to \PP^1$ where $\pi = \tau \circ \gamma$ factors as a triple cover of a double cover (that is $\deg(\tau) = 2$ and $\deg(\gamma) = 3$).
\end{theorem}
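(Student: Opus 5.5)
Given $\vec{e} \in \mathcal{Q}$, fix an admissible partition $\{i,j\} \sqcup \{k,\ell\} = \{2,3,4,5\}$. We take a hyperelliptic curve $\tau \colon X \to \PP^1$ of genus $g_X = e_1 - 1$, so that $\tau_* \cO_X = \cO \oplus \cO(-e_1)$. We then look for a rank $2$ bundle $V$ on $X$ of the form $V = L \oplus M$ with
\[
\tau_* L \simeq \cO(-e_k) \oplus \cO(-e_\ell), \qquad \tau_* M \simeq \cO(-e_i) \oplus \cO(-e_j),
\]
together with a cubic $\cO_X$-algebra structure on $\cO_X \oplus V$ whose relative spectrum $\gamma \colon C \to X$ is smooth and irreducible. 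Then $\pi = \tau \circ \gamma$ has $\pi_* \cO_C = \cO \oplus \cO(-e_1) \oplus \tau_* V$, so $\vec{e}_\pi = \vec{e}$. Triple covers of $X$ with Tschirnhausen bundle $V$ correspond, by the Miranda/Casnati--Ekedahl parametrization, to sections of $\Sym^3 V \otimes \det V^{\vee}$. This bundle is
\[
L^{2}M^{-1} \oplus L \oplus M \oplus M^{2}L^{-1}
\]
(with the appropriate duals, since our $V$ has negative degrees: the summands are $L^{\pm}$ twisted so that the coefficient multiplying $L\otimes L \to M$ lives in $L^{-2}\otimes M$).

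First, the existence of $L$ and $M$. On a hyperelliptic curve, a line bundle $N$ with $\tau_* N = \cO(a) \oplus \cO(b)$, $a \le b$, exists exactly when $b - a \le e_1$, i.e.\ $b-a \le g_X+1$. One takes $N = \tau^*\cO(a) \otimes \cO_X(D)$ for a suitable effective divisor $D$ of degree $b-a$ in general position, which is the hyperelliptic analogue of \Cref{thm:splitting-type-classification}. Conditions \ref{cond:2} and \ref{cond:3} of \Cref{def:admissible-partition} supply precisely these bounds. Moreover, for $b-a \le g_X$ there is a positive-dimensional family of such $N$, with dimension $b-a$ roughly, and this freedom is what we will exploit.

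Next, smoothness. Mirroring the proof of \Cref{prop:constraints}, condition \ref{cond:1} says $\deg L \le \deg M$ after pushforward, so $M$ plays the role of the destabilizing-type summand. The crucial coefficient is $L^{\otimes 2} \to M$, i.e.\ a section of $L^{-2} \otimes M$. Condition \ref{cond:4}, via the Riemann--Hurwitz-type degree formula $\deg \tau_* N = \deg N + 1 - g_X - 2$ used above, translates exactly into
\[
\deg(L^{-2} \otimes M) \ge 0,
\]
matching \eqref{eqn:degree}. The other coefficient bundles then have larger degree. I would choose $L$ and $M$ generically in their families so that all four coefficient bundles are effective. Then I would argue that the linear system of binary cubic forms in $H^0(\Sym^3 V \otimes \det V^\vee)$ is basepoint-free, or at least has a generic member with smooth total space, so that a Bertini argument gives a smooth triple cover $C$. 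Irreducibility follows as in \Cref{lem:cubic-bpf}: a nonzero coefficient in $L^{-2}\otimes M$ prevents $\cO_X \oplus L$ from being a subalgebra, and a connected smooth $C$ must be irreducible.

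The main obstacle is this smoothness step. The hardest case is when $\deg(L^{-2}\otimes M)$ is small, e.g.\ $0$, where that coefficient is a constant. Here I would first try to verify smoothness locally: at a point where some coefficient vanishes, the cubic form must still have no triple root and the discriminant must vanish to order at most one. I would use that on $X$ a line bundle of degree $\ge 2g_X$ is basepoint-free, and otherwise choose $M$ so that $L^{-2}\otimes M \simeq \cO_X$ or is a generic effective bundle. If basepoint-freeness fails for some coefficient bundles, I would fall back on a dimension count analogous to \Cref{lem:dim-reduction,lem:dim-calc}, reducing to a finite integer linear programming check over the inequalities defining $\mathcal{Q}$.
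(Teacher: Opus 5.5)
\textbf{There is a genuine gap: you never produce $(L,M)$ for which the crucial coefficient bundle has a section, and choosing them generically would fail.}

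Your architecture is the paper's. You take a hyperelliptic $X$ of genus $e_1-1$ and $V=L\oplus M$ with splitting types read off from the admissible partition. You cut out $C\subset\PP_X(V)$ by a binary cubic form with coefficients in $\Sym^3V^\vee\otimes\det V\simeq L^{-2}M\oplus L^{-1}\oplus M^{-1}\oplus M^{-2}L$, and you finish with Bertini.

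First, you have the two extreme coefficients reversed. With your labels, Condition~\ref{cond:1} gives $\deg M\ge\deg L$, so $M$ is the destabilizing-type summand. The crucial multiplication is therefore $M\otimes M\to L$, with coefficient in $M^{-2}\otimes L$. Condition~\ref{cond:4} is exactly $\deg(M^{-2}\otimes L)=2e_i+2e_j-e_k-e_\ell-e_1\ge 0$. The bundle you single out, $L^{-2}\otimes M$, has degree $2e_k+2e_\ell-e_i-e_j-e_1\ge g_X+1$, so it has sections for free by Riemann--Roch (this is \Cref{lem:second-line-bundle-degree}).

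Second, and more seriously, nonnegative degree does not give a section. Choosing $L,M$ generically in their splitting loci makes $M^{-2}\otimes L$ a general line bundle of small degree, hence typically not effective. For example, take $\vec e=(3,7,7,12,13)\in\mathcal{Q}$ with partition $\{2,3\}\sqcup\{4,5\}$. Then $\deg(M^{-2}\otimes L)=0$, so you need $L\simeq M^{\otimes 2}$ on the nose. This fails for general $(L,M)$ of splitting types $(-13,-12)$ and $(-7,-7)$. If that coefficient is zero, the cubic form is divisible by a linear form and $C$ is reducible.

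So the real content is an existence statement: produce $(L,M)$ with both prescribed splitting types and $M^{-2}\otimes L$ effective. A dimension count as in \Cref{lem:dim-reduction,lem:dim-calc} only shows that generic objects avoid a bad locus. It cannot produce objects lying in a special locus, so your fallback does not help.

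The missing idea is the paper's explicit choice of divisors. By \Cref{prop:splitting-types-of-hyp}, $N$ has splitting type $(a,b)$ if and only if $N\simeq\tau^*\cO(b)\otimes\cO_X(D)$ with $D$ semireduced of degree $g_X+1-(b-a)$. Your description $\tau^*\cO(a)\otimes\cO_X(D)$ with $\deg D=b-a$ is off in degree by $g_X+1$. The family also has dimension about $g_X+1-(b-a)$, not $b-a$.

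Write $M=\tau^*\cO(-e_i)\otimes\cO_X(D_1)$ and $L=\tau^*\cO(-e_k)\otimes\cO_X(D_2)$. Take $D_1=mQ+D_1'$ and $D_2=2mQ+D_2'$, where $Q$ is a non-Weierstrass point, $D_1'$ is a sum of distinct Weierstrass points, and $m=\min\{\deg D_1,\lfloor\frac12\deg D_2\rfloor\}$. Since $2D_1'\sim(\deg D_1')H$, one gets $M^{-2}\otimes L\simeq\cO_X\bigl((2e_i-e_k-\deg D_1')H+D_2'\bigr)$. Either $D_1'=0$ and $2e_i\ge e_k$, or $\deg D_2'\le 1$ and Condition~\ref{cond:4} forces $2e_i-e_k-\deg D_1'\ge 0$. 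In both cases this bundle is effective.

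With that in hand, the smoothness step you flagged as the main obstacle is comparatively routine. For instance, $L^{-1}$ and $M^{-1}$ are basepoint-free by \Cref{lem:bpf-criterion}, since $\tau_*L^{-1}\simeq\cO(e_k-e_1)\oplus\cO(e_\ell-e_1)$ and $\tau_*M^{-1}\simeq\cO(e_i-e_1)\oplus\cO(e_j-e_1)$ have nonnegative summands.
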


The basic strategy is similar to the proof of \Cref{thm:double-cover-of-cubic}. Namely, fix $\vec{e} \in \mathcal{Q}$ and let $\{i,j\} \sqcup \{k,\ell\}$ be an admissible partition for $\vec{e}$. First, choose a hyperelliptic curve $\tau \colon X \rightarrow \PP^1$ with scrollar invariant $e_1$ (i.e., a hyperelliptic curve of genus $e_1 - 1$). Next, we choose line bundles $L_1$ and $L_2$ on $X$ with splitting types $(-e_j,-e_i)$ and $(-e_\ell,-e_k)$ respectively. The line bundles will be chosen (carefully) so that there is a cubic cover $\gamma \colon C \rightarrow X$ with $\gamma_* \cO_C = \cO_X \oplus L_1 \oplus L_2$.

\subsection{Constructing triple covers of hyperelliptic curves}

Let $\tau \colon X \rightarrow \PP^1$ be a hyperelliptic curve. In this subsection we discuss how, given appropriate line bundles $L_1$ and $L_2$ on $X$, one may construct a triple cover $\gamma \colon C \rightarrow X$ with $\gamma_* \cO_C = \cO_X \oplus L_1 \oplus L_2$. The key proposition is the following.

\begin{proposition}
\label{prop:main-triple-cover-of-double}
Fix $\vec{e} \in \mathcal{Q}$ and let $\{i,j\} \sqcup \{k,\ell\}$ be an admissible partition for $\vec{e}$. Fix a hyperelliptic curve $\tau \colon X \rightarrow \PP^1$ of genus $g_X = e_1 - 1$.  Suppose $L_1$ and $L_2$ are line bundles on $X$ with splitting types  $(-e_i,-e_j)$ and $(-e_k,-e_\ell)$ respectively. Then there exists a triple cover $\gamma \colon C \rightarrow X$ with $\gamma_* \cO_C = \cO_X \oplus L_1 \oplus L_2$.
\end{proposition}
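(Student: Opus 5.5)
The plan is to use the Miranda/Casnati--Ekedahl parametrization of triple covers. Put $W \colonequals (L_1 \oplus L_2)^{\vee} = M_1 \oplus M_2$ with $M_r = L_r^{\vee}$. A triple cover $\gamma\colon C \to X$ with $\gamma_*\cO_C \cong \cO_X \oplus L_1 \oplus L_2$ is the same as a global section $f$ of $\Sym^3 W \otimes \det W^{\vee}$ cutting out $C \subset \PP(W)$. We need such an $f$ for which $C$ is smooth and irreducible. For this reason the whole argument has to use only numerical information (degrees and splitting types), since $L_1, L_2$ are arbitrary subject to their splitting types. The four coefficient bundles are
\[
M_1^{2}M_2^{-1} = L_1^{-2}L_2,\quad M_1 = L_1^{-1},\quad M_2 = L_2^{-1},\quad M_2^2M_1^{-1} = L_2^{-2}L_1 .
\]
Write $f = a x^3 + b x^2y + c xy^2 + d y^3$ with $a,b,c,d$ sections of these bundles.

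\textbf{Step 1: degrees and sections of the coefficient bundles.} From $\deg \tau_*L = \deg L - g_X - 1$ with $g_X = e_1-1$ we get $\deg L_1 = e_1 - e_i - e_j$ and $\deg L_2 = e_1 - e_k - e_\ell$. The splitting types of $L_1^{-1}, L_2^{-1}$ are computed by relative duality, $\tau_*(L^{\vee}) \cong (\tau_* L)^{\vee}\otimes \cO(-e_1-1)$ on a hyperelliptic cover. Conditions \ref{cond:2} and \ref{cond:3} of \Cref{def:admissible-partition} ($e_j - e_i \le e_1$, $e_\ell - e_k\le e_1$) should then give nonnegative summands, so \Cref{lem:bpf-criterion} makes $b,c$ range over basepoint-free systems. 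For the extreme coefficients the degrees are
\[
\deg L_2^{-2}L_1 = 2(e_k+e_\ell) - (e_i+e_j) - e_1, \qquad \deg L_1^{-2}L_2 = 2(e_i+e_j) - (e_k+e_\ell) - e_1 .
\]
By \ref{cond:1} the first is $\ge e_k + e_\ell - e_1$, which is large; by \ref{cond:4} the second is $\ge 0$. In general a line bundle of degree $\ge 2g_X = 2e_1-2$ on $X$ is basepoint-free. I would check case by case, using \ref{cond:1}--\ref{cond:4}, which coefficient bundles land in this range.

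\textbf{Step 2: Bertini on $\PP(W)$.} Let $p\colon \PP(W)\to X$ and $\mathcal{L} = \cO_{\PP(W)}(3)\otimes p^*\det W^\vee$. I would show that $|\mathcal{L}|$ is basepoint-free away from a small locus, and then apply Bertini to get $C$ smooth. A base point at $(x,[u:v])$ requires $f(x)(u,v)=0$ for all $f$. If $a,d$ can be chosen nonvanishing at $x$ and $b,c$ are basepoint-free, there is no base point over $x$. Irreducibility then follows from connectedness: the ample-ish class $\mathcal{L}$ together with $H^1$-vanishing gives $h^0(\cO_C)=1$, and alternatively $L_1, L_2 \not\cong \cO_X$ rules out a component isomorphic to $X$.

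\textbf{Main obstacle.} The hard step is the coefficient $a \in H^0(L_1^{-2}L_2)$ when its degree is small, in particular $0$ in the boundary case of \ref{cond:4}. If $a \equiv 0$ then $[1:0]$ is a root of $f$ in every fiber, so $C$ contains a section of $p$ and is reducible; hence $a \ne 0$ is forced. We also need $\{a=0\}$ to avoid creating singularities. I would first try to show, from the hyperelliptic structure and the fixed splitting types, that $L_1^{-2}L_2$ is effective. Concretely, the idea is to write every $L$ on $X$ with given splitting type in the form $\tau^*\cO(m)(D)$ with $D$ a reduced divisor of bounded degree, and to track $h^0$ through $\tau_*$ using conditions \ref{cond:2}--\ref{cond:4}. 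Once $a \ne 0$, the finitely many zeros of $a$ are handled by a local computation: at such a point, smoothness of $C$ along $[1:0]$ needs $b(x)\ne 0$ or $a$ vanishing to order one. This can be arranged because $b$ moves in a basepoint-free system independent of $a$.
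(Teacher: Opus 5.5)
There is a genuine gap, at exactly the step you flag as the main obstacle. You plan to deduce from the hyperelliptic structure and the splitting types that $L_2\otimes L_1^{\otimes -2}$ is effective. That is false for general $L_1,L_2$, so the statement as literally written (for \emph{all} $L_1,L_2$ of the given splitting types) cannot be proved.

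Take $\vec e=(2,4,4,7,7)$. One checks $\vec e\in\mathcal P_2$, and $\vec e\notin\mathcal P_3$ since $e_4=7>e_1+e_3=6$. The partition $\{2,3\}\sqcup\{4,5\}$ is admissible with equality in \ref{cond:4}, and $g_X=1$. Then $\deg L_1=-6$ and $\deg L_2=-12$, so
\[
\deg(L_2\otimes L_1^{\otimes-2})=2(e_i+e_j)-(e_k+e_\ell)-e_1=0 .
\]
By \Cref{prop:splitting-types-of-hyp}, the bundles of splitting type $(-4,-4)$ are exactly those in $\Pic^{-6}(X)$ other than $\tau^*\cO(-3)$. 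Likewise, those of type $(-7,-7)$ are those in $\Pic^{-12}(X)$ other than $\tau^*\cO(-6)$. So you may take $L_2\not\cong L_1^{\otimes 2}$, and then $H^0(X,L_2\otimes L_1^{\otimes-2})=0$. The same happens for $(4,7,7,12,12)$ with $g_X=3$.

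Now take any triple cover with $\gamma_*\cO_C\cong\cO_X\oplus L_1\oplus L_2$. The component $L_1\otimes L_1\to L_2$ of the multiplication is a section of $L_2\otimes L_1^{\otimes-2}$, hence zero. So $\cO_X\oplus L_1$ is a rank-$2$ subalgebra, which is impossible when $k(C)/k(X)$ is a cubic field extension; this is the same argument as in the proof of \Cref{prop:constraints}. In your notation, $a\equiv 0$ is forced and $C$ is reducible, as you yourself observe. Writing $L\cong\tau^*\cO(m)\otimes\cO_X(D)$ cannot rescue this, because the obstruction depends on the actual line bundles, not on their splitting types.

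The paper's proof does not derive this effectivity either. It asserts $h^0(X,L_2\otimes L_1^{\otimes-2})>0$ holds ``by assumption'', i.e.\ it tacitly uses a hypothesis missing from the statement. It is in the proof of \Cref{thm:double-triple} that $L_1,L_2$ are \emph{chosen} to satisfy it. There one takes $D_1=mQ+D_1'$ and $D_2=2mQ+D_2'$, with $Q$ non-Weierstrass and $D_1'$ a sum of distinct Weierstrass points. Then $L_2\otimes L_1^{\otimes-2}\cong\cO_X\bigl((2e_i-e_k-\deg D_1')H+D_2'\bigr)$, which is effective by \ref{cond:4}.

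So the repair is either to add that hypothesis, or to prove existence only for suitably chosen $L_1,L_2$, which is all \Cref{thm:double-triple} needs. With that change, the rest of your plan coincides with the paper's argument: the cubic forms in $\Sym^3V^{\vee}\otimes\det V$, Bertini on $\PP_X(V)$, and $h^0(X,L_1\otimes L_2^{\otimes-2})>0$ by degree. Your local check at the zeros of $a$ is in fact more accurate than the paper's claim that positivity of $h^0$ makes the whole system basepoint-free. You get smoothness at $(x,[1:0])$ from $\partial_s f=b(x)\neq 0$, with $b$ taken from the basepoint-free system $\lvert L_1^{\vee}\rvert$; positivity of $h^0$ alone only makes the base locus finite.

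One small correction: $\omega_{X/\PP^1}\cong\tau^*\cO(g_X+1)$, so $\tau_*(L^{\vee})\cong(\tau_*L)^{\vee}\otimes\cO(-e_1)$, not $\cO(-e_1-1)$. This gives $\tau_*L_1^{\vee}\cong\cO(e_i-e_1)\oplus\cO(e_j-e_1)$, whose nonnegativity comes from $e_i,e_j\ge e_1$, not from \ref{cond:2} and \ref{cond:3}.
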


We first prove the following lemma.

\begin{lemma}
  \label{lem:second-line-bundle-degree}
  Under the assumptions of \Cref{prop:main-triple-cover-of-double} we have $h^0(X,L_1 \otimes L_2^{\otimes -2}) > 0$.
\end{lemma}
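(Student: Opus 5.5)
The plan is to show that $\tau_*(L_1 \otimes L_2^{\otimes -2})$ has a summand of nonnegative degree, since $h^0(X, L_1\otimes L_2^{\otimes -2}) = h^0(\PP^1, \tau_*(L_1\otimes L_2^{\otimes -2}))$. On a hyperelliptic curve this reduces to a degree count, because every line bundle $M$ on $X$ has splitting type $(a_1,a_2)$ with $a_1 \leq a_2 \leq a_1 + e_1$. This is the degree-$2$ analogue of \Cref{thm:splitting-type-classification} (it is the constraint $a_2 \leq a_1 + e_1$, where $e_1 = g_X+1$ is the scrollar invariant of $\tau$). So if $\deg \tau_* M = a_1 + a_2 \geq e_1$, then $2a_2 \geq a_1 + a_2 \geq e_1 > 0$, and hence $a_2 \geq 0$ and $h^0(M) > 0$. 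More precisely, since $2a_2 \geq a_1+a_2 \ge a_1 + a_2 - e_1 + \dots$, it suffices to have $a_1 + a_2 \geq -e_1$, because then $a_2 \geq (a_1+a_2)/2 \cdot$ and $a_2 \ge a_1+a_2-a_2 \ge a_1+a_2 - (a_1+e_1)$. I will pin down the exact threshold once the degree is computed.

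Step 1 computes $\deg \tau_* M$ for $M = L_1 \otimes L_2^{\otimes -2}$, using the formula from the proof of \Cref{prop:constraints}: $\deg(\tau_* E) = \deg E + 1 - g_X - 2 = \deg E - e_1$. From the splitting types we get $\deg L_1 = e_1 - e_i - e_j$ and $\deg L_2 = e_1 - e_k - e_\ell$. Therefore
\[
\deg M = e_1 - e_i - e_j - 2e_1 + 2e_k + 2e_\ell = 2e_k + 2e_\ell - e_i - e_j - e_1,
\]
and so $\deg \tau_* M = 2(e_k+e_\ell) - (e_i+e_j) - 2e_1$.

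Step 2 finds a nonnegative summand. Write $\tau_*M = \cO(a_1)\oplus\cO(a_2)$ with $a_2 \leq a_1 + e_1$. Then $2a_2 \geq a_1 + a_2$, so it is enough that $a_1 + a_2 \geq -1$. Condition~\ref{cond:1} of \Cref{def:admissible-partition} gives $e_k + e_\ell \geq e_i + e_j$, so $\deg \tau_* M \geq (e_k+e_\ell) - 2e_1$. This is only useful if $e_k+e_\ell \ge 2e_1 - 1$, and that holds because $e_k, e_\ell \geq e_2 \ge e_1$ and at least one of them exceeds $e_1$ unless everything equals $e_1$. I would check carefully for an off-by-one here. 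A cleaner route is Riemann--Roch directly on $X$: $h^0(M) \geq \deg M + 1 - g_X = \deg M + 2 - e_1$, so it suffices to have $\deg M \geq e_1 - 1$. Using \ref{cond:1}, $\deg M \geq e_k + e_\ell - e_1 \geq 2e_2 - e_1 \geq e_1 \geq e_1 - 1$, and this is already enough.

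The main obstacle is therefore only making sure the degree bookkeeping is right, in particular the sign conventions in $\deg L = \deg \tau_* L + e_1$ and which pair of indices plays the role of $L_1$. Note that the proposition lists $L_1$ with splitting type $(-e_i,-e_j)$ and $L_2$ with $(-e_k,-e_\ell)$, and that condition~\ref{cond:1} orients the partition so that $e_i+e_j \le e_k+e_\ell$, which is exactly what makes $L_2^{\otimes -2}$ dominate. Once the degrees are computed, the Riemann--Roch bound together with \ref{cond:1} and $e_2 \geq e_1$ gives $h^0 > 0$ directly, and conditions \ref{cond:2}--\ref{cond:4} are not needed for this lemma. They are presumably used later, for instance \ref{cond:4} to control $L_1^{\otimes 2}\otimes L_2^{\vee}$.
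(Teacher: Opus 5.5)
Your final Riemann--Roch argument is correct and is essentially the paper's proof. The paper also computes $\deg(L_1\otimes L_2^{\otimes -2})$ and uses condition~(1) of \Cref{def:admissible-partition}, in the form $\deg L_2 \le \deg L_1$, to bound it below by $-\deg L_1 = e_i+e_j-e_1 \ge e_1 = g_X+1$, then concludes by Riemann--Roch; you bound it below by $e_k+e_\ell-e_1$ instead, and you should drop the partly garbled splitting-type detour in your first paragraph, which is not needed.
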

\begin{proof}
First note that
\[
    \deg(L_1 \otimes L_2^{\otimes -2}) = \deg(L_1) - 2\deg(L_2).
\]
Because $e_{i} + e_j \leq e_k + e_\ell$, we have $\deg(L_2) \leq \deg(L_1)$. Therefore
\[
    \deg(L_1) - 2\deg(L_2) \geq -\deg(L_1).
\]
A short calculation shows that $-\deg(L_1) = e_i + e_j - e_1 \geq e_1 = g_X + 1$. In particular we have $\deg(L_1 \otimes L_2^{\otimes -2}) \geq g_X + 1$ and so by the Riemann--Roch theorem $h^0(X, L_1 \otimes L_2^{\otimes -2}) > 0$.
\end{proof}

\begin{proof}[Proof of \Cref{prop:main-triple-cover-of-double}]
Write $V = L_1 \oplus L_2$ and consider the vector bundle
\[
    \Sym^3 V^{\vee} \otimes \det(V) \simeq (L_2 \otimes L_1^{\otimes -2}) \oplus L_1^{\vee} \oplus L_2^{\vee} \oplus (L_1 \otimes L_2^{\otimes -2}).
\]
Upon fixing a splitting $V = L_1 \oplus L_2$, a section $f \in \Sym^3 V^{\vee}$ can be written as a binary cubic form
\[
    f(x,y) = f_0x^3 + f_1 x^2y + f_2xy^2 + f_3y^3  
\]
where $f_0 \in H^0(X,(L_2 \otimes L_1^{\otimes -2}))$, $f_1 \in H^0(X, L_1^{\vee})$, $f_2 \in H^0(X, L_2^{\vee})$ and $f_3 \in H^0(X,(L_1 \otimes L_2^{\otimes -2}))$. The vanishing of $f$ gives rise to a variety
\[
  C = Z(f) \subseteq \PP_X(V).
\]

Observe that $h^0(X,L_2 \otimes L_1^{\otimes -2}) > 0$ by assumption and $h^0(X,L_1 \otimes L_2^{\otimes -2}) > 0$ follows from Lemma~\ref{lem:second-line-bundle-degree}. But this implies that the linear series $\Sym^3 V^{\vee} \otimes \det(V)$ is basepoint-free on $\PP_X(V)$. Indeed, for a closed point $p \in X$, the fiber $\PP_X(V)$ above $p$ is isomorphic to $\PP^1$, and the restriction 
\[
  (\Sym^3 V^{\vee} \otimes \det(V)) \vert_{p}
\]
contains the binary cubic forms $ax^3 + by^3$ for arbitrary $a,b \in \CC$. As a result, Bertini's theorem implies that a general section $f$ gives rise to a smooth curve $C$. The natural map $\gamma \colon C \rightarrow X$ is easily seen to be a cubic cover and by construction $\gamma_* \cO_C = \cO_X \oplus V$. 
\end{proof}

\subsection{Line bundles on hyperelliptic curves}
We interlude with a brief discussion of standard facts about line bundles on hyperelliptic curves. Let $\tau \colon X \rightarrow \PP^1$ be a hyperelliptic curve of genus $g$. Let $H \colonequals \tau^* [0:1]$ be a divisor representing the fiber class. The main fact we need is \Cref{prop:splitting-types-of-hyp}, which relates divisors of a certain form to line bundles with fixed splitting type. 

\begin{defn}
\label{def:semireduced-divisor}
A \emph{semireduced divisor} on $X$ is an effective divisor which does not contain any fibers of $\tau$.
\end{defn}

\begin{lemma}
  \label{lem:coh-of-semireduced}
  Let $\tau \colon X \to \PP^1$ be a hyperelliptic curve of genus $g_X$ and let $D$ be a semireduced divisor on $X$ of degree $\deg(D) \leq g_X + 1$. Then we have $h^0(X, D - H) = 0$ where $H = \tau_*[0:1]$.
\end{lemma}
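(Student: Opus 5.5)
We argue by contradiction. Suppose $h^0(X, D - H) > 0$, and choose an effective divisor $E$ with $E \sim D - H$, so that $D \sim E + H$ and $\deg(E) = \deg(D) - 2 \leq g_X - 1$.

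The plan is to exploit the standard structure of linear systems on a hyperelliptic curve. For a divisor of degree at most $g_X$ (indeed for any special divisor), every complete linear system decomposes as $|D| = r\,g^1_2 + F$. Here $r = h^0(D) - 1$ and $F$ is a fixed effective divisor containing no fiber of $\tau$. Said differently, since $g^1_2 = |H|$, every divisor in $|D|$ is a sum of $r$ fibers plus the same base divisor $F$. I would recall this via Clifford's theorem and the hyperelliptic case of equality, or directly via the canonical map. That map factors through $\tau$ followed by the rational normal curve embedding, so sections of $K - D$ correspond to degree $g_X - 1$ polynomials vanishing on the image $\tau(D)$.

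Cleanest route: use Riemann--Roch with the canonical bundle $K_X = (g_X - 1)H$. We have
\[
h^0(D - H) = \deg D - 2 + 1 - g_X + h^0\bigl(K_X - D + H\bigr) = \deg D - 1 - g_X + h^0\bigl(g_X H - D\bigr).
\]
Now $H^0(g_X H)$ is exactly the pullback of $H^0(\PP^1, \cO(g_X))$. This holds because $\tau_* \cO_X = \cO \oplus \cO(-g_X - 1)$, which gives $\tau_* \cO_X(g_X H) = \cO(g_X) \oplus \cO(-1)$.

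The next step is to count the sections of $g_X H$ vanishing on $D$. Write $D = \sum n_p p$. Because $D$ is semireduced, over each point $q \in \PP^1$ the divisor $D$ is supported on a single point of the fiber, or has multiplicity one at each of two distinct points. The second case cannot occur: two distinct points of a fiber together form that fiber. Hence over each $q$ the divisor $D$ is $m_q$ times a single point $p$.

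If $q$ is not a branch point, then $\tau$ is étale at $p$, so a pulled-back polynomial vanishes to order $m_q$ at $p$ iff it vanishes to order $m_q$ at $q$. If $q$ is a branch point, semireducedness forces $m_q \leq 1$. Order $1$ at $p$ then requires only order $1$ at $q$, since pullbacks vanish to even order at a ramification point.

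The constraint imposed by $D$ is therefore a divisor on $\PP^1$ of degree $\deg D' \le \deg D$, where $\deg D'$ equals $\deg D$ minus nothing; more precisely $\deg D' = \deg D$ exactly. This gives
\[
h^0(g_X H - D) = \max\bigl(0,\, g_X + 1 - \deg D\bigr) = g_X + 1 - \deg D
\]
when $\deg D \le g_X + 1$. Substituting into Riemann--Roch yields $h^0(D - H) = \deg D - 1 - g_X + g_X + 1 - \deg D = 0$, as desired.

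The main obstacle is the local analysis at ramification points, namely the correspondence between vanishing conditions upstairs and downstairs. I would handle it by writing $X$ locally as $y^2 = x$ near a branch point, where pulled-back functions lie in $\CC[[x]] = \CC[[y^2]]$.
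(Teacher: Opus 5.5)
Your proof is correct, but it is organized differently from the paper's. The paper applies Riemann--Roch to $D$ itself. It computes $h^0(K_X - D) = g_X - \deg D$ from two facts: the canonical series is pulled back from $\cO(g_X-1)$ on $\PP^1$, and a semireduced $D$ imposes $\deg D$ independent conditions on pulled-back forms. From this it gets $h^0(D) = 1$. It then needs a second step: a nonzero section of $D - H$ would embed the $2$-dimensional $H^0(H)$ into $H^0(D)$. The case $\deg D = g_X + 1$ is handled separately: there $h^0(D) = 2$, and the paper argues that the section cutting out $D$ factors as $sh$, forcing $D$ to contain a fiber.

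You instead apply Riemann--Roch to $D - H$, whose Serre dual is $g_X H - D$. Since $\tau_*\cO_X(g_X H) = \cO(g_X) \oplus \cO(-1)$, every section of $g_X H$ is still a pullback. This is exactly the largest twist for which that holds, since $(g_X+1)H$ acquires a section that is not pulled back. So the same independent-conditions count gives $h^0(g_X H - D) = g_X + 1 - \deg D$, and the Riemann--Roch expression vanishes identically.

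Your route is a single uniform computation, with no case split and no final multiplication argument. Your explicit local check at ramification points also fills in a step the paper only asserts. A pulled-back form vanishes to even order there, so a multiplicity-one point over a branch point imposes exactly one condition downstairs. The paper's route, in exchange, records $h^0(D) = 1$ along the way, which is not needed here.

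For the write-up:
\begin{itemize}
\item Delete the unused opening: the contradiction setup with $E$ and the digression on $|D| = r\,g^1_2 + F$.
\item State the fiber dichotomy correctly. Over a non-branch point, $D$ cannot have positive multiplicity at both points, since it would then contain the fiber.
\item Simply assert $\deg D' = \deg D$ rather than the garbled ``minus nothing'' sentence.
\end{itemize}
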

\begin{proof}
We first handle the case where $\deg(D) \leq g_X$. Let $K_X$ be a canonical divisor on $X$. In this case the Riemann--Roch theorem implies
\[
    h^0(X,D) = \deg(D) -g_X + 1 + h^0(X,K_X - D).
\]
Because $X$ is hyperelliptic, the canonical map factors as $ X \rightarrow \PP^1 \rightarrow \PP^{g-1}$ where the second map is given by the line bundle $\cO(g_X-1)$ on $\PP^1$. By definition, we have
\[
    h^0(X, K_X - D) = \dim \{f \in H^0(\PP^1, \cO(g_X - 1)) : \tau^* Z(f) \geq D\}
\] 
where the vanishing set $Z(f)$ is considered with multiplicity. However, since $D$ is semireduced, this is equal to
\[
    \dim \{f \in H^0(\PP^1, \cO(g_X-1)) : Z(f) \geq \tau^* D\}.
\]
In particular $h^0(X, K_X - D) = g_X - \deg(D)$, and so $h^0(X,D) = 1$. Because $h^0(X,H) = 2$, the linear system $\lvert D \rvert$ cannot contain $\lvert H \rvert$ as a sublinear series, and hence $h^0(X, D - H) = 0$, as required.

When $\deg(D) = g_X + 1$ a similar calculation shows that $h^0(X,D) = 2 = h^0(X,H)$. Suppose there exists a nonzero section $s \in H^0(X,D-H)$. Multiplication by $s$ gives an isomorphism $H^0(X,H) \rightarrow H^0(X,D)$. Choose $f \in H^0(X,D)$ such that $Z(f) = D$. Then $f = sh$ for some $h \in H^0(X,H)$. In other words, $D = H + Z(s)$ for some effective divisor $Z(s)$, which contradicts the assumption that $D$ is semireduced.
\end{proof}

\begin{proposition}
  \label{prop:splitting-types-of-hyp}
  Let $L$ be a line bundle on a hyperelliptic curve $\tau \colon X \to \PP^1$ of genus $g_X$. Then $L$ has splitting type $(a,b)$ if and only if $L \simeq \tau^* \cO(b) \otimes \cO_X(D)$ for some semireduced divisor $D$ of degree $g_X + 1 - (b - a)$ on $X$.
\end{proposition}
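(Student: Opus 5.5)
Our plan is to prove both directions by computing $h^0$ of twists, using that a splitting type $(a,b)$ with $a\le b$ on $\PP^1$ is determined by the function $n \mapsto h^0(X, L(n)) = h^0(\PP^1, \tau_*L \otimes \cO(n))$, where $L(n) \colonequals L\otimes\tau^*\cO(n)$. Note first that $\deg \tau_* L = \deg L + 1 - g_X - 2 = \deg L - g_X - 1$ (the formula recalled in the proof of \Cref{prop:constraints}). So the condition $a+b = \deg L - g_X - 1$ is automatic, and a splitting type $(a,b)$ is pinned down by knowing its largest summand $b$, i.e.\ the smallest $n$ with $h^0(L(-b)) \neq 0$ and $h^0(L(-b-1))=0$.

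For the ``if'' direction, suppose $L \simeq \tau^*\cO(b)\otimes \cO_X(D)$ with $D$ semireduced of degree $g_X+1-(b-a)$. Then $L(-b) \simeq \cO_X(D)$ has a nonzero section since $D$ is effective, so $\tau_*L$ has a summand of degree $\geq b$. Meanwhile $L(-b-1) \simeq \cO_X(D-H)$, and $\deg D \le g_X+1$ because $b \ge a$. Hence \Cref{lem:coh-of-semireduced} gives $h^0(D-H)=0$, so both summands of $\tau_*L$ have degree $\le b$. The largest summand therefore equals $b$, and the degree count forces the other summand to be $\deg L - g_X - 1 - b$. Since $\deg L = 2b + g_X + 1 - (b-a)$, this other summand is $a$.

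For the ``only if'' direction, suppose $\tau_*L \simeq \cO(a)\oplus\cO(b)$ with $a\le b$. Then $h^0(L(-b)) \ge 1$, so $L(-b) \simeq \cO_X(D')$ for some effective $D'$. Write $D' = D + \tau^*E$, where $E$ is an effective divisor on $\PP^1$ of degree $m$ and $D$ contains no full fibers; for this we use that $D'\ge \tau^*p$ for some fiber if and only if $D'$ contains that fiber. Then $D$ is semireduced and $L \simeq \tau^*\cO(b+m)\otimes\cO_X(D)$. If $m\ge1$, then $L(-b-1) \simeq \cO_X(D + \tau^*E - H)$ is effective, because $\tau^*E - H$ is linearly equivalent to an effective divisor. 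This would give $h^0(\tau_*L(-b-1))>0$, contradicting the fact that $b$ is the maximal summand. Hence $m=0$, and $\deg D = \deg L - 2b = (a+b+g_X+1) - 2b = g_X+1-(b-a)$.

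The main subtle point is the first direction: we need $\deg D \le g_X+1$ in order to apply \Cref{lem:coh-of-semireduced}, and this is exactly where $b\ge a$ enters. The decomposition of an effective divisor into a semireduced part plus pulled-back fibers in the second direction is routine.
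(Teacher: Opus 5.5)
Your proof is correct and follows essentially the same route as the paper: in both directions the splitting type is read off from $h^0$ of the twists $L\otimes\tau^*\cO(-b)$ and $L\otimes\tau^*\cO(-b-1)$, with \Cref{lem:coh-of-semireduced} (applicable because $b\ge a$) supplying the vanishing in the ``if'' direction. The only differences are cosmetic. In the ``if'' direction the paper first computes the splitting type of $\cO_X(D)$ and then twists. In the ``only if'' direction it concludes directly from $h^0(X,\cO_X(D-H))=0$ that $D$ contains no fibers, rather than splitting off a pulled-back divisor $\tau^*E$ and showing $E=0$.
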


\begin{proof}
First suppose that $L$ has splitting type $(a,b)$. From the projection formula, we obtain
\[
  \tau_* (L \otimes \tau^* \cO(-b)) \simeq \tau_* L \otimes \cO(-b) \simeq \cO(a - b) \oplus \cO.
\]
Taking global sections yields
\[
    h^0(X, L \otimes \tau^* \cO(-b)) = h^0(\PP^1,  \cO(a - b) \oplus \cO) \geq 1. 
\]
Therefore $L \otimes \tau^* \cO(-b) = \cO_X(D)$ for some effective divisor $D$. Now note that
\[
    h^0(X,\cO_X(D-H)) = h^0(\PP^1, (\cO(a - b) \oplus \cO)(-1)) = 0,
\]
so $D$ contains no fibers of $\tau$. Finally we have
\[
  \deg(D) = \deg(L) - \tau^* \cO(b) = a+b + g_X + 1 - 2b = g_X + 1 - (b - a).
\]
as required.

Conversely suppose that $a \leq b$ are integers and $D$ is a semireduced divisor on $X$ of degree $g_X + 1 - (b-a)$. Let $(a',b')$ be the splitting type of $\cO_X(D)$. Because $D$ is effective, one obtains
\[
    \min\{0,a'+1\} + \min\{0,b'+1\} = h^0(\PP^1, \tau_* \cO_X(D)) = h^0(X,\cO_X(D)) \geq 1
\]
which implies $b' \geq 0$. Moreover, the push-pull formula and \Cref{lem:coh-of-semireduced} yields:
\begin{align*}
\min\{0,a'\} + \min\{0,b'\} &= h^0(\PP^1, \cO(-1) \otimes \tau_* \cO_X(D))  \\
&= h^0(\PP^1,\tau_* \cO_X(D-H)) \\
&= h^0(X,\cO_X(D-H)) \\
&= 0 
\end{align*}
and hence $b' \leq 0$. Thus $b' = 0$. Finally, we have
\[
    a' + b' + g_X + 1 = \deg(D) = g_X + 1 - (b-a),
\]
which yields $a' = -(b-a)$. Therefore $\tau_* \cO_X(D) = \cO(-(b-a)) \oplus \cO$ and another use of the push-pull formula yields
\[
    \tau_*(\tau^* \cO(b) \otimes \cO_X(D)) = \cO(b) \otimes \tau_* \cO_X(D) = \cO(a) \oplus \cO(b)
  \]
  as required.
\end{proof}

\subsection{Proof of \Cref{thm:double-triple}}

Let $\vec{e} \in \mathcal{Q}$ and let $\tau \colon X \rightarrow \PP^1$ be a hyperelliptic curve of genus $g_X = e_1 - 1$. Let $\{i,j\} \sqcup \{k,\ell\}$ be an admissible partition of $\vec{e}$. Let $D_1$ and $D_2$ be semireduced divisors (which will be chosen later) on $X$ with degrees $\deg(D_1) = g_X + 1 - (e_j - e_i)$ and $\deg(D_2) = g_X + 1 - (e_\ell - e_k)$ respectively. Consider the line bundles 
\[
  L_1 \colonequals \tau^* \cO(-e_i) \otimes \cO_X(D_1) \quad \text{and} \quad 
    L_2 \colonequals \tau^* \cO(-e_k) \otimes \cO_X(D_2).
\]
By \Cref{prop:splitting-types-of-hyp} the line bundles $L_1$ and $L_2$ have splitting type $(-e_j, -e_i)$ and $(-e_\ell, -e_k)$ respectively. In particular
\[
    L_2 \otimes L_1^{\otimes -2} = \tau^*\cO(2e_i - e_k) \otimes \cO_X(D_2 - 2D_1) = \cO_X((2e_i - e_k)H + D_2 - 2D_1).
\]
We claim that $D_1$ and $D_2$ can be chosen such that $h^0(X,L_2 \otimes L_1^{\otimes -2})$ has a nontrivial global section, i.e., that $(2e_i - e_k)H + D_2 - 2D_1$ is linearly equivalent to an effective divisor. 

Let $m = \min\{\deg(D_1), \lfloor \frac{1}{2}\deg(D_2)\rfloor \}$. Let $Q$ be a non-Weierstrass point on $X$ and choose $D_1$ and $D_2$ so that 
\[
    D_1 = mQ + D_1' \quad \text{and} \quad D_2 = 2mQ + D_2'    
\]
for effective divisors $D_1'$ and $D_2'$. Further, choose $D_1'$ to be a sum of distinct Weierstrass points, and choose $D_2'$ to be a semireduced divisor not containing $Q$. These choices ensure that $D_1$ and $D_2$ are semireduced. 

Now, because $2D_1'$ is linearly equivalent to $H$, the divisor $(2e_i - e_k)H + D_2 - 2D_1$ is linearly equivalent to
\begin{equation}
\label{eqn:divisor}
  \Delta = (2e_i - e_k - \deg(D_1'))H + D_2'
\end{equation}
The condition that $\{i,j\} \sqcup \{k,\ell\}$ is admissible implies that $\deg(\Delta) \geq 0$. We claim $\Delta$ is effective.

First suppose $D_1' = 0$. The condition that $\vec{e} \in \mathcal{Q}$ implies that $2e_i - e_k \geq 0$. Hence $\Delta$ is effective because $D_2'$ is effective. On the other hand if $D_1' \neq 0$, then $0 \leq \deg(D_2') \leq 1$ and
\[
    \deg(\Delta) = 2(2e_i - e_k - \deg(D_1')) + 1 \geq 0.
\]
In particular $2e_i - e_k - \deg(D_1') \geq 0$ and $\Delta$ is effective. 

By \Cref{prop:main-triple-cover-of-double} we conclude that there exists a triple cover $\gamma \colon C \rightarrow X$ with $\gamma_* \cO_C = \cO_X \oplus L_1 \oplus L_2$. The composition $\tau \circ \gamma \colon C \rightarrow \PP^1$ is a sextic cover with scrollar invariants $\vec{e}$. \hfill \qed

\appendix
\section{Tschirnhausen bundles of primitive degree \texorpdfstring{$6$}{6} covers}
\label{sec:primitive}
In this section, we sketch a potential application of \Cref{thm:main-thm} towards the following conjecture in the case $d = 6$.

\begin{conjecture}({\cite[Conjecture~1.3]{VV}})
A $(d-1)$-tuple $\vec{e}$ arises as the scrollar invariants of a smooth primitive degree $d$ cover if and only if $e_{i+j} \leq e_i + e_j$ for all $i,j$. 
\end{conjecture}

As discussed in the introduction, the case $d = 6$ is the smallest degree in which this conjecture is not known. In \cite[Theorem~1.4]{VV} it is shown that if $\vec{e}_\pi$ is the scrollar invariants of a primitive cover, then $e_{i + j} \leq e_i + e_j$ for all $i$ and $j$. It therefore remains to produce smooth \emph{primitive} covers with prescribed scrollar invariants. 

Define
\[
  \mathcal{P}_6 \colonequals \left\{
    (e_1,\dots,e_5) \in \ZZ^5 \quad \Bigg| \quad 
    \begin{aligned}
      & 1 \leq e_1 \leq \dots \leq e_5,  \\
      & e_{i+j} \leq e_i + e_j \text{    for all     } i,j 
    \end{aligned}
  \right\}.
\]
Let $\mathcal{H}_{6,g}$ be the Hurwitz stack of degree $6$ covers of $\PP^1_{\CC}$ by smooth irreducible genus $g$ curves. Let $\mathcal{M}(\vec{e}) = \{\pi \in \mathcal{H}_{6,g} : \vec{e}_{\pi} = \vec{e} \}$ and let $\mathcal{M}^{\fac}(\vec{e}) \subseteq \mathcal{M}(\vec{e})$ be the locus of covers which factor through a nontrivial proper subcover. Let $\codim \mathcal{M}(\vec{e})$ and $\codim \mathcal{M}^{\fac}(\vec{e})$ be the respective codimensions of $\mathcal{M}(\vec{e})$ and $\mathcal{M}^{\fac}(\vec{e})$ in $\mathcal{H}_{6,g}$.

\begin{proposition}
If $\codim \mathcal{M}^{\fac}(\vec{e}) > \sum_{i < j} \max\{0, e_j - e_i - 1\}$ then there exists a smooth primitive cover with scrollar invariants $\vec{e}$.
\end{proposition}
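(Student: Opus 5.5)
The plan is to compare dimensions. Every irreducible component of $\mathcal{M}(\vec{e})$ has codimension at least $\sum_{i<j}\max\{0,e_j-e_i-1\} = h^1(\PP^1,\End(\cO(\vec{e})))$; if this is sharp on some component, then that component is too large to be swallowed by $\mathcal{M}^{\fac}(\vec{e})$, whose codimension is strictly bigger by hypothesis. Any cover in that component but not in $\mathcal{M}^{\fac}(\vec{e})$ is primitive and has scrollar invariants $\vec{e}$.

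The argument has three steps.

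\textbf{Step 1: nonemptiness.} First I will show that $\mathcal{M}(\vec{e})$ is nonempty. The hypothesis only makes sense if $\mathcal{M}^{\fac}(\vec{e})$ is a proper (and, I will assume, nonempty) locus. If $\mathcal{M}^{\fac}(\vec{e})$ is nonempty, then $\mathcal{M}(\vec{e}) \neq \emptyset$ trivially. Alternatively, since $\vec{e} \in \mathcal{P}_6 \subseteq \mathcal{P}_3$, \Cref{thm:double-cover-of-cubic} produces a cover with scrollar invariants $\vec{e}$ directly. If $\mathcal{M}^{\fac}(\vec{e})$ is empty, with codimension read as $\infty$, the same construction still gives a point of $\mathcal{M}(\vec{e})$. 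That point lies in $\mathcal{M}^{\fac}$, which is a contradiction, so this degenerate case is vacuous.

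\textbf{Step 2: the codimension upper bound.} Next I will prove that every component of $\mathcal{M}(\vec{e})$ has codimension at most $u(\vec{e}) := h^1(\End\cO(\vec{e}))$ in $\mathcal{H}_{6,g}$. The standard route is to use the morphism $\mathcal{H}_{6,g} \to \Bun_{5}(\PP^1)$ sending $\pi \mapsto \cE_\pi$. Locally this can be presented via a smooth atlas of covers together with a trivialization, in the style of Casnati--Ekedahl resolutions, or by deformation theory of the algebra $\pi_*\cO_C$. The splitting stratum of $\cO(\vec{e})$ in the stack of rank $5$ bundles of fixed degree has codimension exactly $u(\vec{e})$, and it is cut out locally by $u(\vec{e})$ equations: it is the pullback of the origin in the versal deformation space $\Ext^1(\cO(\vec{e}),\cO(\vec{e}))$. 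Pulling back along the map from $\mathcal{H}_{6,g}$, the locus $\mathcal{M}(\vec{e})$ is therefore locally cut out by at most $u(\vec{e})$ equations near any of its points. By Krull's principal ideal theorem, applied on a smooth atlas since $\mathcal{H}_{6,g}$ is smooth, each component has codimension at most $u(\vec{e})$.

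\textbf{Step 3: conclusion.} Take a component $Z$ of $\mathcal{M}(\vec{e})$. Then
\[
\codim Z \leq u(\vec{e}) < \codim \mathcal{M}^{\fac}(\vec{e}),
\]
so $Z \not\subseteq \mathcal{M}^{\fac}(\vec{e})$. Hence $Z \setminus \mathcal{M}^{\fac}(\vec{e})$ is nonempty, and any point of it is a smooth primitive cover with scrollar invariants $\vec{e}$.

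The main obstacle is Step 2. Specifically, one must justify that the classifying map to $\Bun$ is well-defined on a smooth chart of $\mathcal{H}_{6,g}$, and that the splitting locus is locally a pullback of a point from a smooth versal family of dimension $u(\vec{e})$. I would first try to deduce this from the versality of $\Ext^1(\cO(\vec{e}),\cO(\vec{e}))$ for families of bundles on $\PP^1$, which is classical. If that proves awkward, I would instead argue by semicontinuity that the stratum is locally closed and of codimension at most $u(\vec{e})$ wherever it is nonempty. Since the statement only asks for a sketch, I would cite this known bound rather than reprove it.
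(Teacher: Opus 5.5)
Your proposal is correct and follows essentially the same route as the paper. You get nonemptiness of $\mathcal{M}(\vec{e})$ from the realization results, where your observation $\mathcal{P}_6 \subseteq \mathcal{P}_3$ is exactly what makes the paper's appeal to the main theorem work; you bound $\codim \mathcal{M}(\vec{e}) \leq h^1(\PP^1,\End(\cO(\vec{e})))$ via the Tschirnhausen map to $\Bun_{\PP^1}$; and you compare this with $\codim \mathcal{M}^{\fac}(\vec{e})$. Your component-wise justification of the codimension bound, using smoothness and the splitting stratum having codimension exactly $h^1(\End(\cO(\vec{e})))$, is a more careful version of the step the paper attributes to upper semicontinuity. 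Like the paper, you implicitly use that $\vec{e} \in \mathcal{P}_6$ (or at least that $\mathcal{M}(\vec{e})$ is nonempty), which the proposition does not state.
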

\begin{proof}
Consider the Tschirnhausen morphism $\Tsch \colon \mathcal{H}_{6,g} \rightarrow \Bun_{\PP^1}$  sending a cover $\pi \colon C \rightarrow \PP^1$ to the Tschirnhausen bundle $\mathcal{E}_\pi \colonequals (\pi_* \cO_C/\cO_{\PP^1})^{\vee}$. \Cref{thm:main-thm} implies that if $\vec{e} \in \mathcal{P}_6$ then $\mathcal{M}(\vec{e}) \neq \emptyset$. The upper semi-continuity theorem now implies that if $\vec{e} \in \mathcal{P}_6$, then
\[
    \codim \mathcal{M}(\vec{e}) \leq h^1(\PP^1, \End(\cO(e_1) \oplus \dots \oplus \cO(e_{5}))) = \sum_{i < j} \max\{0, e_j - e_i - 1\}.
\]
By assumption we have 
\[
    \codim \mathcal{M}^{\fac}(\vec{e}) > \sum_{i < j} \max\{0, e_j - e_i - 1\} \geq \codim \mathcal{M}(\vec{e})
\]
so therefore $\mathcal{M}(\vec{e})$ contains a smooth primitive cover. 
\end{proof}

\begingroup
\hypersetup{
  urlcolor=mybiburlcolor
}
\newcommand{\etalchar}[1]{$^{#1}$}
\providecommand{\bysame}{\leavevmode\hbox to3em{\hrulefill}\thinspace}
\providecommand{\MR}{\relax\ifhmode\unskip\space\fi MR }
\providecommand{\MRhref}[2]{%
  \href{http://www.ams.org/mathscinet-getitem?mr=#1}{#2}
}
\providecommand{\bibtitleref}[2]{%
  \hypersetup{urlbordercolor=0.8 1 1}%
  \href{#1}{#2}%
  \hypersetup{urlbordercolor=cyan}%
}
\providecommand{\href}[2]{#2}

\endgroup

\end{document}